\def\titlerunning#1{\gdef\titrun{#1}}
\def\author#1{\gdef\autrun{\def\and{\unskip, }#1}\gdef\@author{#1}}
\def\address#1{{\def\and{\\\hspace*{18pt}}\renewcommand{\thefootnote}{}%
\footnote {#1}}%
\markboth{\autrun}{\titrun}}
\def\email#1{e-mail: #1}
\def\subjclass#1{{\renewcommand{\thefootnote}{}%
\footnote{\emph{Mathematics Subject Classification (2010):} #1}}}
\def\keywords#1{\par\medskip
\noindent\textbf{Keywords.} #1}
\newtheorem{thm}{Theorem}[section]
\newtheorem{cor}[thm]{Corollary}
\newtheorem{lem}[thm]{Lemma}
\newtheorem{conj}[thm]{Conjecture}
\newtheorem{proposition}[thm]{Proposition}
\theoremstyle{definition}
\newtheorem{defin}[thm]{Definition}
\newtheorem{rem}[thm]{Remark}
\newtheorem{example}[thm]{Example}
\def\oc{\mathcal{O}}
\def\vol{\operatorname{vol}}
\def\ord{\operatorname{ord}}
\numberwithin{equation}{section}
\begin{document}


\baselineskip=17pt


\titlerunning{Generalized Okounkov Bodies}

\title{\textbf{Transcendental Morse Inequality\\and Generalized Okounkov Bodies}}

\author{Ya Deng}

\date{}

\maketitle

\address{Ya Deng:Institut Fourier, Universit\'e de Joseph Fourier, 100 rue des Maths, 38402 Saint-Martin d'H\`eres, France, and School of Mathematical Sciences, University of Science and Technology of China;
\email{Ya.Deng@ujf-grenoble.fr}}

\subjclass{Algebraic geometry; Several complex variables and analytic spaces}


\begin{abstract}
The main goal of this article is to construct ``arithmetic Okounkov bodies" for an arbitrary pseudo-effective (1,1)-class $\alpha$ on a K\"ahler manifold. Firstly, using Boucksom's divisorial Zariski decompositions for  pseudo-effective (1,1)-classes on compact K\"ahler manifolds, we prove the  differentiability of volumes of big classes for K\"ahler manifolds on which modified nef cones and nef cones coincide; this includes K\"ahler surfaces. We then apply our differentiability results to prove Demailly's transcendental Morse inequality for these particular classes of K\"ahler manifolds. In the second part, we construct the convex body $\Delta(\alpha)$ for any big class $\alpha$ with respect to a fixed flag by using positive currents, and prove that this newly defined convex body coincides with the Okounkov body when $\alpha\in {\rm NS}_{\mathbb{R}}(X)$; such convex sets $\Delta(\alpha)$ will be called generalized Okounkov bodies. As an application we prove that any rational point in the interior of Okounkov bodies is ``valuative". Next we give a complete characterisation of generalized Okounkov bodies on surfaces, and show that the generalized Okounkov bodies behave very similarly to original Okounkov bodies. By the differentiability formula, we can relate the standard Euclidean volume of $\Delta(\alpha)$ in $\mathbb{R}^2$ to the volume of a big class $\alpha$, as defined by Boucksom; this solves a problem raised by Lazarsfeld in the case of surfaces. Finally, we study the behavior of the generalized Okounkov bodies on the boundary of the big cone, which are characterized by numerical dimension.
\keywords{Generalized Okounkov body, positive current, Siu decomposition, divisorial Zariski decomposition, Lelong number, transcendental Morse inequality,
numerical dimension, differentiability formula}
\end{abstract}


\section{Introduction}

In \cite{Oko96} Okounkov introduced a natural procedure to associate a convex body $\Delta(D)$ in $\mathbb{R}^n$ to any ample divisor $D$ on an $n$-dimensional projective variety. Relying on the work of Okounkov, Lazarsfeld and Musta\c{t}\u{a} \cite{LM09}, and Kaveh and Khovanskii \cite{KK09,KK10}, have systematically studied Okounkov's construction, and associated to any big divisor and any fixed flag of subvarieties a convex body which is now called the Okounkov body. 

We now briefly recall the construction of the Okounkov body.  We start with a complex projective variety $X$ of dimension $n$. Fix a flag
$$ Y_{\bullet} : X=Y_0\supset Y_1 \supset Y_2 \supset \ldots \supset Y_{n-1} \supset Y_{n}=\{p\}
$$
where $Y_i$ is a smooth irreducible subvariety of codimension $i$ in $X$. For a given big divisor $D$, one defines a valuation-like function
$$
\mu=\mu_{Y_{\bullet},D}: (H^0(X,\oc_X(D))-\{0\})\rightarrow \mathbb{Z}^n.
$$
as follows. First set $\mu_1=\mu_1(s)={\rm ord}_{Y_1}(s)$. Dividing $s$ by a local equation of $Y_1$,  we obtain a section
$$
\widetilde{s}_1\in H^0(X,\oc_X(D-\mu_1Y_1))
$$
that does not vanish identically along $Y_1$. We restrict $\widetilde{s}_1$ on $Y_1$ to get a non-zero section
$$
s_1\in H^0(Y_1,\oc_{Y_1}(D-\mu_1Y_1)),
$$
then we write
$\mu_2(s)=\ord_{Y_2}(s_1)$,
and continue in this fashion to define the remaining integers $\mu_i(s)$. The image of the
function $\mu$ in $\mathbb{Z}^n$ is denoted by $\mu(D)$. With this in hand, we define the \textit{Okounkov body of D with respect to the fixed flag $Y_{\bullet}$} to be
$$
\Delta(D)=\Delta_{Y_{\bullet}}(D)= \text{closed convex hull}\left(\displaystyle\bigcup_{m\geq 1}\frac{1}{m}\cdot \mu(mD)\right)\subseteq \mathbb{R}^n.
$$

According to the open question raised in the final part of \cite{LM09}, it is quite natural to wonder whether one can  construct ``arithmetic Okounkov bodies" for an arbitrary pseudo-effective (1,1)-class $\alpha$ on a K\"ahler manifold, and realize the volumes of these classes by convex bodies as well. In our paper, using positive currents in a natural way, we give a construction of a convex body $\Delta(\alpha)$ associated to such a class $\alpha$, and show that this newly defined convex body coincides with the Okounkov body when $\alpha\in {\rm NS}_{\mathbb{R}}(X)$.

\begin{thm}
\label{equivalent}
Let $X$ be a smooth projective variety of dimension $n$, $L$ be a big line bundle on $X$ and $Y_{\bullet}$ be a fixed admissible flag. Then we have $$\Delta(c_1(L))=\Delta(L)=\overline{\bigcup_{m=1}^{\infty} \frac{1}{m}\nu(mL)}.$$ Moreover, in the definition of Okounkov body $\Delta(L)$, it suffices to take the closure of the set of normalized valuation vectors instead of the closure of the convex hull.
\end{thm}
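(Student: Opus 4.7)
The plan is to prove the three equalities by establishing two nontrivial inclusions. Write $V := \overline{\bigcup_{m \geq 1} \frac{1}{m}\nu(mL)}$ for the closure (without convex hull) of the set of normalized valuation vectors. The containment $V \subseteq \Delta(L)$ is immediate from the definition of the classical Okounkov body as a closed convex hull, so the real content is (a) $V \subseteq \Delta(c_1(L))$, (b) $\Delta(c_1(L)) \subseteq V$, and (c) deducing the ``moreover'' statement and the equality $\Delta(c_1(L)) = \Delta(L)$ from convexity.

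For (a), given a nonzero $s \in H^0(X,mL)$, the normalized current of integration $T_s := \frac{1}{m}[\operatorname{div}(s)]$ is a positive closed $(1,1)$-current in the class $c_1(L)$. A local computation shows that the successive generic Lelong numbers of $T_s$ along the flag $Y_\bullet$ (taking strict transforms/restrictions in the same way one restricts sections in the Okounkov construction) recover precisely the components of $\frac{1}{m}\nu(s)$. Hence $\frac{1}{m}\nu(s)$ is a valuation vector of an actual positive current in $c_1(L)$ and so lies in $\Delta(c_1(L))$; passing to the closure gives the inclusion.

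For (b), which is the heart of the argument, a point of $\Delta(c_1(L))$ is represented as a limit of valuation vectors $\nu(T_k)$ of positive currents $T_k \in c_1(L)$. The plan is to approximate each such $T_k$ weakly by normalized currents $T_{s_m} = \frac{1}{m}[\operatorname{div}(s_m)]$, $s_m \in H^0(X,mL)$, using the Bergman-kernel / analytic approximation theorem of Demailly for $(1,1)$-currents on a big line bundle: any positive current in $c_1(L)$ is a weak limit of curvature currents of normalized Bergman-type metrics, which are of the form $T_{s_m}$. The key refinement needed is that the approximating sections can be arranged so that the successive generic Lelong numbers along the flag actually converge (not just semicontinuously) to those of $T_k$. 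This follows from the fact that the generic Lelong number along a subvariety is continuous under the Bergman approximation when one has uniform $L^2$-control, and a diagonal extraction then produces a single sequence of normalized valuation vectors converging to the given point of $\Delta(c_1(L))$.

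Finally, combining (a) and (b) gives $V = \Delta(c_1(L))$. Since $\Delta(c_1(L))$ was constructed from the convex family of positive currents in $c_1(L)$, it is itself convex, and hence already equals the closed convex hull of $V$, which is by definition $\Delta(L)$. This proves both $\Delta(c_1(L)) = \Delta(L)$ and the ``moreover'' part: the closed convex hull in the original Okounkov construction is automatically convex and so coincides with the mere closure. The principal obstacle is step (b): controlling the convergence of iterated generic Lelong numbers under a current approximation that is a priori only weak requires balancing Demailly's regularization against the upper semicontinuity of Lelong numbers, and this is where the bulk of the technical work will lie.
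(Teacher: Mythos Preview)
Your overall logical skeleton is fine, and step (a) is essentially the paper's Remark \ref{section divisor}. The problem is entirely in step (b), and it is a genuine gap rather than a technical nuisance.

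First, Demailly's Bergman-kernel regularization does \emph{not} produce currents of the form $\tfrac{1}{m}[\operatorname{div}(s_m)]$. What it produces are currents whose local potentials are $\tfrac{1}{m}\log\sum_j |s_j^{(m)}|^2$, where $\{s_j^{(m)}\}$ is an orthonormal basis of $H^0(X,mL)\otimes\mathcal{I}(m\varphi)$. These have analytic singularities along the base scheme of the full linear system twisted by the multiplier ideal, not along a single divisor. So the sentence ``which are of the form $T_{s_m}$'' is simply false, and with it the whole mechanism of (b) collapses: you never land in $V$.

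Second, even if you had a sequence of single-section currents converging weakly to $T$, your claim about convergence of the \emph{iterated} Lelong numbers is unsupported. The valuation $\nu_2$ requires first subtracting $\nu_1[Y_1]$ and then \emph{restricting} the resulting current to $Y_1$; restriction of currents to subvarieties is not continuous under weak convergence, and upper semicontinuity goes the wrong way for what you need. The ``uniform $L^2$-control'' you invoke handles $\nu_1$ but says nothing about $\nu_2,\dots,\nu_n$.

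The paper avoids both problems by a completely different route. It works only with K\"ahler currents with \emph{algebraic} singularities (a dense subset, by an easy convexity argument, Lemma \ref{closure same}), so that restrictions along the flag are automatically well-defined. Then, rather than approximating, for each such $T$ it \emph{constructs} an actual section $s\in H^0(X,kL)$ with $\mu(s)=k\nu(T)$ exactly (Theorem \ref{approximation}). The construction is an induction along the flag: one starts on the curve $Y_{n-1}$, where Riemann--Roch gives a section with the prescribed vanishing, and then lifts it step by step to $Y_{n-2},\dots,Y_0=X$ using the Ohsawa--Takegoshi extension theorem with the singular metric coming from $T$. This is the missing idea; without some form of $L^2$-extension you will not be able to pass from currents back to sections.
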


By Theorem \ref{equivalent}, we know that our definition of the Okounkov body for any pseudo-efffective class could be treated as a generalization of the original Okounkov body. A very interesting problem is to find out exactly
which points in the Okounkov body $\Delta(L)$ are given by valuations of sections. This is expressed by saying that a rational point of $\Delta(L)$ is
``valuative". By Theorem \ref{equivalent} we can give some partial answers to this question which have been given in  \cite{KL14} in the case of surfaces. 
\begin{cor}
\label{valuation}
Let $X$ be a projective variety of dimension $n$ and $Y_{\bullet}$ be an admissible flag. If $L$ is a big line bundle, then any rational point in ${\rm int}(\Delta(L))$ is a valuative point.
\end{cor}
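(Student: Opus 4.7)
The plan is to exploit the stronger form of Theorem \ref{equivalent}, namely that $\Delta(L)$ already equals the closure of the set $S:=\bigcup_{m\geq 1} \frac{1}{m}\nu(mL)$ without any convex-hull operation. Combined with the multiplicativity of the valuation map, this will let me realize any rational interior point of $\Delta(L)$ as an honest element of $S$.

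The first step is to record the additivity property $\nu_{Y_\bullet}(s\cdot t)=\nu_{Y_\bullet}(s)+\nu_{Y_\bullet}(t)$ for sections $s\in H^0(kL)$ and $t\in H^0(\ell L)$, which follows by induction on the flag from $\mathrm{ord}_{Y_1}(st)=\mathrm{ord}_{Y_1}(s)+\mathrm{ord}_{Y_1}(t)$ together with $\widetilde{st}=\widetilde{s}\cdot\widetilde{t}$ upon restricting to $Y_1$. From this I deduce that $S$ is closed under rational convex combinations: given $p_i=\frac{1}{k_i}\nu(s_i)\in S$ and rationals $\lambda_i\geq 0$ with $\sum \lambda_i=1$, I bring the $k_i$ to a common denominator $K$ (replacing $s_i$ by $s_i^{K/k_i}$), write $\lambda_i=a_i/M$ with $a_i\in\mathbb{Z}_{\geq 0}$ and $\sum a_i=M$, and observe that $\sum \lambda_i p_i=\frac{1}{KM}\nu\bigl(\prod_i s_i^{a_i}\bigr)$ with $\prod_i s_i^{a_i}\in H^0(KM\, L)$, hence $\sum \lambda_i p_i\in S$.

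Now take a rational point $v\in\mathrm{int}(\Delta(L))$. By Theorem \ref{equivalent}, $S$ is dense in $\Delta(L)$, hence in the open neighbourhood of $v$ that lies inside $\Delta(L)$. Choose an $n$-simplex $T\subset\mathrm{int}(\Delta(L))$ containing $v$ in its interior and with affinely independent vertices $q_0,\ldots,q_n$ close to $v$; by density, perturb each $q_i$ slightly to obtain $p_i\in S$ so that $v$ still lies in the interior of the simplex with vertices $p_0,\ldots,p_n$. Since $p_0,\ldots,p_n$ are affinely independent rational vectors and $v$ is rational, the barycentric coordinates $v=\sum \lambda_i p_i$ are unique and rational, with $\lambda_i>0$ and $\sum \lambda_i=1$. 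Applying the closure property of $S$ established in the previous step yields $v\in S$, i.e.\ $v=\frac{1}{M}\nu(s)$ for some $s\in H^0(ML)$, which is exactly the valuative property.

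The only non-trivial ingredient is Theorem \ref{equivalent} itself (the ``no convex hull needed'' refinement); once this is in hand, the main verification is the closure of $S$ under rational convex combinations, and the remaining content is elementary convex geometry in $\mathbb{R}^n$. I do not foresee a serious obstacle beyond checking that the perturbation of the vertices $q_i$ to points of $S$ can be made small enough to keep $v$ in the interior of the perturbed simplex, which is a standard openness argument.
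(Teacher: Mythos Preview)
Your argument is correct and structurally identical to the paper's: pick an $n$-simplex inside $\operatorname{int}(\Delta(L))$ with vertices in a dense $\mathbb{Q}$-convex subset, and conclude that the rational interior point lies in that subset via its (rational) barycentric coordinates.

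The only genuine difference is the choice of intermediate $\mathbb{Q}$-convex set. You work directly with $S=\bigcup_{m\geq 1}\tfrac{1}{m}\mu(mL)$ and prove its $\mathbb{Q}$-convexity by the elementary multiplicativity $\mu(s\cdot t)=\mu(s)+\mu(t)$ of the flag valuation on sections. The paper instead routes through the analytic set $\Delta_{\mathbb{Q}}(c_1(L))$ of valuation vectors of K\"ahler currents with algebraic singularities: its $\mathbb{Q}$-convexity is immediate from convex combinations of currents, and then Theorem~\ref{approximation} (the Ohsawa--Takegoshi based extension argument) is invoked a second time to pass from $\Delta_{\mathbb{Q}}(c_1(L))$ into $S$. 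Since Theorem~\ref{equivalent} already encapsulates that analytic input, your version avoids re-invoking it and is marginally more self-contained; the paper's version, on the other hand, makes no use of the multiplicativity of $\mu$ on sections. Both reach the same conclusion with the same convex-geometry skeleton.
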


It is quite natural to wonder whether our newly defined convex body for big classes behaves similarly as the original Okounkov body. In the situation of complex surfaces, we give an affirmative answer to the question raised in \cite{LM09}, as follows:

\begin{thm}
\label{Okounkov}
Let $X$ be a compact K\"ahler surface, $\alpha\in H^{1,1}(X,\mathbb{R})$ be a big class. If $C$ is an irreducible divisor of $X$, there are piecewise linear continuous functions
$$ f\ ,\ g\ :\ [a,s]\mapsto \mathbb{R}_+$$
with $f$ convex, $g$ concave, and $f\leq g$, such that $\Delta(\alpha)\subset \mathbb{R}^2$ is the region bounded by the graphs of $f$ and $g$:
$$
\Delta(\alpha)=\{(t,y)\in \mathbb{R}^2\mid a\leq t \leq s, \mbox{and}\ f(t)\leq y\leq g(t)\}.
$$
Here $\Delta(\alpha)$ is the generalized Okounkov body with respect to the fixed flag
$$
X\supseteq C\supseteq \{x\},
$$
and $s=\sup\{t>0\mid \alpha-tC {\rm\ is\ big}\}$. If $C$ is nef, $a=0$ and $f(t)$ is increasing; otherwise, $a=\sup\{t>0\mid C\subseteq E_{nK}(\alpha-tC)\}$, where  $E_{nK}:=\bigcap_TE_+(T)$ for $T$ ranging among the K\"ahler currents in $\alpha$, which is the non-K\"ahler locus.
Moreover, $\Delta(\alpha)$ is a finite polygon whose number of vertices is bounded by $2\rho(X)+2$, where $\rho(X)$ is the Picard number of $X$, and
$$
\vol_X(\alpha)=2\vol_{\mathbb{R}^2}(\Delta(\alpha)).
$$
\end{thm}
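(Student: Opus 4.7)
The plan is to adapt the surface argument of Lazarsfeld--Musta\c{t}\u{a} to the K\"ahler setting, using Boucksom's divisorial Zariski decomposition in place of the classical one and the transcendental differentiability formula, proved earlier in the paper for K\"ahler surfaces, in place of the algebraic identity $\mathrm{vol}(D)=P^2$. For each $t\in(a,s)$ I would write $\alpha-tC=Z(\alpha-tC)+N(\alpha-tC)$ with $N(\alpha-tC)=\sum_i\nu_i[D_i]$ an effective $\mathbb{R}$-divisor whose components satisfy $D_i\neq C$ (since $t<s$), and define
$$
f(t)=\sum_i\nu_i\,(D_i\cdot C)_x,\qquad g(t)=f(t)+Z(\alpha-tC)\cdot C,
$$
where $(D_i\cdot C)_x$ is the local intersection multiplicity at $x$. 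The central step is to identify the slice $\Delta(\alpha)_t:=\{y\in\mathbb{R}:(t,y)\in\Delta(\alpha)\}$ with the interval $[f(t),g(t)]$.

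For the slice identification, the lower bound $y\geq f(t)$ reflects the minimality of Boucksom's negative part: for any closed positive current $T\in\alpha$ with generic Lelong number $t$ along $C$, the difference $T-t[C]$ lies in $\alpha-tC$ and the divisorial part of its Siu decomposition dominates $N(\alpha-tC)$, so the second valuation coordinate at $x$ is at least $f(t)$. For the upper bound together with surjectivity onto the whole interval, I would construct currents of the form $t[C]+N(\alpha-tC)+T'$, where $T'$ is a closed positive current in the modified nef class $Z(\alpha-tC)$ whose Lelong number at $x$ can be made to take any prescribed value in $[0,Z(\alpha-tC)\cdot C]$; the availability of such $T'$ is the transcendental analogue of the section-production step and would rest on Demailly's regularization together with the transcendental Morse inequality established earlier in the paper.

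With the slice description in hand, the regularity statements follow from the chamber decomposition of the big cone on a surface: the support of $N(\beta)$ is locally constant on finitely many rational polyhedral chambers, so $t\mapsto N(\alpha-tC)$ is piecewise linear on $[a,s]$, hence $f$ is piecewise linear and $g-f=Z(\alpha-tC)\cdot C$ likewise. Convexity of $f$ and concavity of $g$ come from subadditivity of $N$ and superadditivity of $Z$ as functions of $\beta$. The bound $2\rho(X)+2$ on the number of vertices follows from the fact that each chamber involves a set of curves with negative-definite intersection form (so of cardinality at most $\rho(X)$), and the ray $\{\alpha-tC\}$ crosses at most $\rho(X)$ chamber walls on each side. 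The boundary value $a$ is read off from the construction: if $C$ is nef then $\nu(\alpha-tC,C)=0$ throughout $[0,s)$ and the construction extends down to $t=0$, with $f$ non-decreasing since enlarging $N(\alpha-tC)$ only enlarges its mass at $x$; otherwise $a=\sup\{t:C\subseteq E_{nK}(\alpha-tC)\}$ is exactly the threshold below which every current in $\alpha-tC$ has positive generic Lelong number along $C$, forcing the slice to be empty.

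For the volume identity, Fubini gives
$$
\mathrm{vol}_{\mathbb{R}^2}(\Delta(\alpha))=\int_a^s\bigl(g(t)-f(t)\bigr)\,dt=\int_a^s Z(\alpha-tC)\cdot C\,dt,
$$
while the differentiability formula from the first part of the paper reads $\frac{d}{dt}\mathrm{vol}_X(\alpha-tC)=-2\,Z(\alpha-tC)\cdot C$. Integrating from $a$ to $s$, using $\mathrm{vol}_X(\alpha-sC)=0$ at the pseudo-effective threshold together with $\mathrm{vol}_X(\alpha-aC)=\mathrm{vol}_X(\alpha)$ (valid because $Z(\alpha-tC)=Z(\alpha)$ for $t\in[0,a]$, since on that interval $C$ is absorbed into the negative part and the modified nef part is unchanged), yields the claimed $\mathrm{vol}_X(\alpha)=2\,\mathrm{vol}_{\mathbb{R}^2}(\Delta(\alpha))$. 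I expect the main obstacle to be the transcendental slice identification in the second paragraph: the unavailability of sections of line bundles forces one to produce closed positive currents with prescribed Lelong numbers in a modified nef class, which requires a careful interplay between Demailly's regularization theory and the minimality properties of Boucksom's decomposition, going well beyond what is needed in the projective case.
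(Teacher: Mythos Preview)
Your overall architecture matches the paper's proof: slice $\Delta(\alpha)$ at height $t$, identify the slice with $[f(t),g(t)]$ via the divisorial Zariski decomposition of $\alpha-tC$, read off piecewise linearity from the constancy of $\operatorname{Supp} N(\alpha-tC)$ on subintervals, and recover the volume identity by integrating the differentiability formula. The lower bound on the slice, the vertex count, the treatment of $a$ in the non-nef case, and the Fubini argument are all essentially as in the paper.

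The genuine gap is precisely where you flag it, but the tools you name will not close it. Neither Demailly's regularization nor the transcendental Morse inequality manufactures a current $T'$ in the nef class $Z(\alpha_t)$ whose restriction to $C$ has a \emph{prescribed} Lelong number at $x$ approaching $Z(\alpha_t)\cdot C$: regularization only tames singularities already present, and the Morse inequality is a volume bound with no pointwise content. What the paper uses instead is an \emph{extension theorem} for K\"ahler currents from $C$ to $X$. On a curve it is trivial to build a K\"ahler current with any Lelong number at $x$ below the degree; the paper then extends it to $X$, in the K\"ahler-class case, by a Richberg-type gluing of local models $\varphi(z_1)+A|z_2|^2$ patched against $\delta\log|s_C|^2$ away from $C$, and afterwards reduces the big-and-nef case to the K\"ahler case by peeling off a small multiple of the Siu divisorial part. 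Without this extension step (or, equivalently, without invoking the Collins--Tosatti extension theorem that the paper also cites), the surjectivity half of your slice identification is unproved. One small wording issue: what you need to prescribe on $T'$ is $\nu(T'|_C,x)$, the Lelong number of the restriction, not the Lelong number of $T'$ at $x$ viewed as a point of $X$.
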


In \cite{LM09}, it was asked whether the Okounkov body of a divisor on a complex surface could be an infinite polygon. In \cite{KLM10}, it was shown that the Okounkov body is always a finite polygon. Here we give an explicit description for the ``finiteness" of the polygons appearing as generalized Okounkov bodies of big classes, and conclude that it also holds for the original Okounkov bodies by Theorem \ref{equivalent}.

As one might suspect from the construction of Okounkov bodies, the Euclidean volume of $\Delta(D)$ has a strong connection with the growth of the groups $H^0(X,\oc_X(mD))$. In \cite{LM09}, the following precise relations were shown:
\begin{equation}
\label{volume equal}
n!\cdot\vol_{\mathbb{R}^n}(\Delta(D))= \vol_X(D):=\lim\limits_{k\rightarrow\infty}\frac{n!}{k^n}h^0(X,\oc_X(kD)).
\end{equation}
The proof of (\ref{volume equal}) relies on  properties of sub-semigroups of $\mathbb{N}^{n+1}$ constructed from the graded linear series $\{H^0(X,\oc_X(mD))\}_{m\geq0}$. However, when $\alpha$ is a big class which does not belong to ${\rm NS}_{\mathbb{R}}(X)$, there are no such algebraic objects which correspond to $\vol_X(\alpha)$, and we only have the following analytic definition due to Boucksom:
$$\vol_X(\alpha):= \sup_{T}\int_{X} T_{ac}^n,$$
where $T$ ranges among all positive $(1,1)$-currents. Therefore, it is quite natural to propose the following conjecture:
\begin{conj}
Let $X$ be a compact K\"ahler manifold of dimension $n$. For any big class $\alpha\in H^{1,1}(X,\mathbb{R})$, we have
$$
\vol_{\mathbb{R}^n}(\Delta(\alpha))=\frac{1}{n!}\cdot \vol_X(\alpha).
$$
\end{conj}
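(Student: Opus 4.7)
The plan is to establish the conjecture by proving both inequalities $n! \cdot \vol_{\mathbb{R}^n}(\Delta(\alpha)) \leq \vol_X(\alpha)$ and its reverse, thereby extending the surface case of Theorem \ref{Okounkov} to arbitrary dimension. My strategy would be modeled on the Lazarsfeld--Musta\c{t}\u{a} semigroup argument in \cite{LM09}, but with positive $(1,1)$-currents playing the role of sections. Specifically, for each $m \geq 1$ I would consider the set $V_m(\alpha) \subset \mathbb{N}^n$ of normalized valuation vectors $\nu(T) = (\nu_1(T),\ldots,\nu_n(T))$ defined by successive Lelong numbers along the flag $Y_\bullet$, as $T$ ranges over positive currents with analytic singularities in the class $m\alpha$. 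By construction, $\Delta(\alpha) = \overline{\bigcup_{m \geq 1} \frac{1}{m} V_m(\alpha)}$, so the conjecture reduces to the asymptotic estimate $|V_m(\alpha)| \sim \frac{m^n}{n!} \vol_X(\alpha)$ as $m \to \infty$.

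For the upper bound, I would exploit the fact that successive Lelong numbers along $Y_\bullet$ carry a rough Monge--Amp\`ere mass budget: if $T \in m\alpha$ has $\nu(T) = (a_1,\ldots,a_n)$, then the non-pluripolar self-intersection $\int_X \langle T^n \rangle$ detects the $a_i$'s through the Siu decomposition performed iteratively along the flag. Combining this with Boucksom's characterization $\vol_X(m\alpha) = \sup_T \int_X \langle T^n \rangle$ and a careful counting of distinct valuation vectors attainable within a fixed mass bound should control $|V_m(\alpha)|$ from above. For the lower bound, I would invoke the transcendental Morse inequalities developed earlier in the paper: given a target valuation $(a_1/m,\ldots,a_n/m)$ in the interior of $\Delta(\alpha)$, Demailly's mass concentration technique together with the Morse estimate should produce positive currents in $m\alpha$ whose Lelong numbers realize the prescribed vector up to small error. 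Inducting on dimension via a transcendental analog of restricted volumes along $Y_1$ would then allow one to conclude $|V_m(\alpha)| \gtrsim \frac{m^n}{n!} \vol_X(\alpha)$.

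The hardest part will be the absence of any direct transcendental analog of graded linear series: there is no Riemann--Roch type count for ``currents in $m\alpha$ with prescribed Lelong data'', and no vector-space structure on positive currents to exploit. The naive idea of reducing to the algebraic case by approximating $\alpha$ by rational classes fails badly in general, since ${\rm NS}_{\mathbb{R}}(X)$ need not be dense in $H^{1,1}(X,\mathbb{R})$ for a generic K\"ahler manifold. A more promising route is to pass to a modification $\pi : \widetilde X \to X$ on which $\pi^*\alpha$ admits cleaner approximations, but extracting a uniform volume estimate this way is subtle because the flag $Y_\bullet$ must be transported coherently. Ultimately the serious difficulty lies in the lower bound: the transcendental Morse inequality only guarantees the existence of currents with controlled volume, not the abundance needed to fill out the convex body, and closing this gap likely requires a genuinely new pluripotential counting technique.
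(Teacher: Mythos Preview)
The statement you are attempting to prove is labeled a \emph{Conjecture} in the paper, and the paper does \emph{not} prove it in dimension $n\geq 3$. The only case settled is $n=2$ (Theorem~\ref{Okounkov}), and the method there bears no resemblance to your proposal: on a surface the author slices $\Delta(\alpha)$ over $t\in[a,s]$, identifies the slice at $t$ with the restricted body $\Delta_{X|C}(\alpha-tC)=[f(t),g(t)]$ of length $Z(\alpha-tC)\cdot C$ via Lemma~\ref{restrict body}, and then integrates using the differentiability formula $\frac{d}{dt}\vol_X(\alpha-tC)=-2Z(\alpha-tC)\cdot C$ from Theorems~\ref{differential} and~\ref{differential 2}. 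No semigroup or counting argument appears anywhere.

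Your plan has a structural gap that prevents it from even getting started. You write $V_m(\alpha)\subset\mathbb{N}^n$ and propose to estimate $|V_m(\alpha)|$, but for a positive current $T$ with analytic singularities in a transcendental class $m\alpha$ the Lelong numbers $\nu_i(T)$ are \emph{real} numbers, not integers (cf.\ Section~\ref{analytic singularity}: the constant $c$ is only required to lie in $\mathbb{R}_{>0}$). The set of valuation vectors $\nu(T)$ is therefore a genuine convex subset of $\mathbb{R}^n$, not a collection of lattice points, and the cardinality $|V_m(\alpha)|$ is infinite for every $m$. The Lazarsfeld--Musta\c{t}\u{a} mechanism you are invoking depends essentially on the fact that $\mu(s)\in\mathbb{Z}^n$ for sections $s$, which is what makes the graded-semigroup count meaningful; once that discreteness is gone there is nothing to count. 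Relatedly, by the homogeneity $\nu(mT)=m\nu(T)$ one has $\frac{1}{m}\Delta_{\mathbb{R}}(m\alpha)=\Delta_{\mathbb{R}}(\alpha)$ for all $m$, so passing to $m\alpha$ buys no extra information in the transcendental setting. This is precisely the obstruction the paper flags in the introduction (``there are no such algebraic objects which correspond to $\vol_X(\alpha)$''), and it is why the author abandons counting entirely in favor of the analytic slicing-plus-differentiation argument in dimension~$2$.
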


In Theorem \ref{Okounkov}, we prove this conjecture in dimension 2. Our method is to relate the Euclidean volume of the slice of the generalized Okounkov body to the differential of the volume of the big class. We prove the following differentiability formula for volumes of big classses.

\begin{thm}[Differentiability of volumes]
\label{main differential}
Let $X$ be a compact K\"ahler surface and $\alpha$ be a big class. If $\beta$ is a nef class or $\beta=\{C\}$ where $C$ is an irreducible curve, we have
$$
\left.\frac{d}{dt}\right|_{t=0}\vol_X(\alpha+t\beta)=2Z(\alpha)\cdot \beta,
$$
where $Z(\alpha)$ is the divisorial Zariski decomposition of $\alpha$ defined in Section \ref{divisorial}.
\end{thm}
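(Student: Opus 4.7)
The plan is to reduce everything to a quadratic computation via Boucksom's identity $\vol_X(\alpha)=Z(\alpha)^2$ for big classes on a K\"ahler surface, combined with the orthogonality $Z(\alpha)\cdot D_i=0$ for each prime component of the negative part $N(\alpha)=\sum_{i=1}^{k}\nu_iD_i$. Since modified nef coincides with nef on a surface, $Z(\alpha)$ is big and nef and the intersection matrix $(D_i\cdot D_j)$ is negative definite. First I would dispatch the degenerate case $\beta=\{C\}$ with $C=D_j$ for some $j$: only the coefficient $\nu_j$ is perturbed, it stays positive for $|t|$ small, and the positive part is unchanged, so $\vol_X(\alpha+tC)\equiv Z(\alpha)^2$ locally, giving derivative $0=2Z(\alpha)\cdot D_j$.

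In the remaining cases (either $\beta$ nef, or $\beta=\{C\}$ with $C$ irreducible and $C\notin\{D_i\}$), I propose the ansatz
$$P_t:=Z(\alpha)+t\beta-\sum_{i=1}^{k}\mu_i(t)D_i,\qquad N_t:=N(\alpha)+\sum_{i=1}^{k}\mu_i(t)D_i,$$
and enforce the orthogonality conditions $P_t\cdot D_j=0$. Since $Z(\alpha)\cdot D_j=0$, this reduces to the linear system $\sum_i\mu_i(t)(D_i\cdot D_j)=t\,\beta\cdot D_j$, which has a unique solution with each $\mu_i(t)$ linear in $t$, by negative-definiteness of $(D_i\cdot D_j)$. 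In particular $\mu_i(t)=O(t)$, so $N_t$ remains an effective $\mathbb{R}$-divisor supported on $\{D_i\}$ with negative-definite intersection matrix, and $P_t+N_t=\alpha+t\beta$ by construction.

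The main obstacle is verifying that $P_t$ is nef, which would identify $(P_t,N_t)$ as the divisorial Zariski decomposition of $\alpha+t\beta$. By the Buchdahl--Lamari characterization, nefness on a K\"ahler surface is equivalent to $P_t\cdot C'\geq 0$ for every irreducible curve $C'$. The equalities $P_t\cdot D_j=0$ hold by construction; for $C'\neq D_i$ one has $P_t\cdot C'=Z(\alpha)\cdot C'+O(t)$, which is strictly positive for $|t|$ small whenever $Z(\alpha)\cdot C'>0$. The delicate point is when $C'$ lies in the null locus of $Z(\alpha)$ but outside $\{D_i\}$: such a curve may enter the negative support of $\alpha+t\beta$ on one side of $t=0$, but its coefficient must be $O(t)$ and it still satisfies $Z(\alpha)\cdot C'=0$, so the resulting additional correction $\varepsilon(t)$ remains orthogonal to $Z(\alpha)$ and of order $O(t)$. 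I would handle this by invoking continuity of the Zariski decomposition on the interior of the big cone.

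Finally, expanding
$$Z(\alpha+t\beta)^2=\bigl(Z(\alpha)+t\beta+\eta(t)\bigr)^2,$$
where $\eta(t):=-\sum_i\mu_i(t)D_i+\varepsilon(t)=O(t)$ is a combination of prime divisors orthogonal to $Z(\alpha)$, the cross term $2Z(\alpha)\cdot\eta(t)$ vanishes identically, while $2t\beta\cdot\eta(t)$ and $\eta(t)^2$ are both $O(t^2)$. Therefore
$$\vol_X(\alpha+t\beta)=Z(\alpha)^2+2t\,Z(\alpha)\cdot\beta+O(t^2),$$
and differentiating at $t=0$ yields the claimed identity.
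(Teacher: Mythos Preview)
Your approach is essentially the same as the paper's. Both hinge on $\vol_X(\alpha)=Z(\alpha)^2$, the orthogonality $Z(\alpha)\cdot D_i=0$, and the fact that for small $t$ the support of $N(\alpha+t\beta)$ stays within the exceptional family determined by $Z(\alpha)$. The paper obtains this support stability directly from Boucksom's continuity of $\alpha\mapsto N(\alpha)$ on the interior of $\mathcal{E}$, then decomposes $\beta=\beta^\perp+\beta_0$ in $V^\perp\oplus V$ (with $V$ spanned by the $D_i$) to get the linear formula $Z(\alpha+t\beta)=Z(\alpha)+t\beta^\perp$; your coefficients $\mu_i(t)$ are precisely the coordinates of $t\beta_0$, so your ansatz $P_t$ and the paper's $Z(\alpha)+t\beta^\perp$ are the same object. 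For $\beta=\{C\}$ with $C$ not nef and $C\notin\{D_i\}$, the paper additionally inverts the Gram matrix explicitly and uses a sign lemma (all entries of $S^{-1}$ are $\le 0$) to verify nefness of the candidate positive part for $t\ge 0$. Your degenerate case $C=D_j$ is subsumed in the paper under $C\subseteq E_{nK}(Z(\alpha))$, handled via the description of the fiber $Z^{-1}(Z(\alpha))$.

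One caution on your direct route: the criterion ``$P_t\cdot C'\ge 0$ for every irreducible curve $C'$'' is not a complete characterization of nefness on an arbitrary compact K\"ahler surface---curves may be too scarce, and the Buchdahl--Lamari description of the K\"ahler cone requires in addition $P_t^2>0$ and membership in the correct component of the positive cone. So your curve-by-curve check cannot close the argument on its own; this is exactly why, as you correctly anticipate, one must ultimately invoke continuity of the divisorial Zariski decomposition. Once you do, your argument and the paper's coincide.
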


A direct corollary of this formula is the  \textit{transcendental Morse inequality}:

\begin{thm}\it
\label{morse 2}
Let $X$ be a compact K\"ahler surface. If $\alpha$ and $\beta$ are nef classes satisfying the inequality ${\alpha}^2- 2\alpha \cdot \beta>0$, then $\alpha-\beta$ is big and $\vol_X(\alpha-\beta) \geq {\alpha}^2- 2\alpha \cdot \beta $.
\end{thm}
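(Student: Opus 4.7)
The plan is to deduce the inequality from the differentiability formula of Theorem~\ref{main differential} by a simple monotonicity argument along the segment $\alpha_t := \alpha - t\beta$, $t \in [0,1]$. The first point to settle is that $\alpha$ itself is big: since $\alpha$ is nef and $\alpha^2 > 2\,\alpha\cdot\beta \geq 0$ (using $\alpha\cdot\beta \geq 0$ for two nef classes), the class $\alpha$ has strictly positive self-intersection, and because $\vol_X = (\cdot)^2$ on the nef cone of a compact K\"ahler surface, this gives $\vol_X(\alpha) > 0$, so $\alpha$ is big. Hence $\alpha_t$ remains big on some maximal half-open interval $[0, t^*)$ with $t^* > 0$, and Theorem~\ref{main differential} is available along it.

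Next I would introduce the auxiliary function $G(t) := \vol_X(\alpha_t) - \alpha_t^2$ on $[0, t^*)$. Writing the divisorial Zariski decomposition as $\alpha_t = Z(\alpha_t) + N(\alpha_t)$ with $N(\alpha_t)$ an effective $\mathbb{R}$-divisor, Theorem~\ref{main differential} applied to $\vol_X(\alpha_t) = \vol_X\bigl(\alpha_{t_0} + (t_0 - t)\beta\bigr)$ yields
\[
G'(t) \;=\; -2\,Z(\alpha_t)\cdot\beta \;+\; 2\,\alpha_t\cdot\beta \;=\; 2\,N(\alpha_t)\cdot\beta \;\geq\; 0,
\]
since $\beta$ is nef and $N(\alpha_t)$ is a non-negative combination of prime divisors. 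As $G(0) = 0$, it follows that
\[
\vol_X(\alpha_t) \;\geq\; \alpha_t^2 \;=\; \alpha^2 - 2t\,\alpha\cdot\beta + t^2\beta^2 \qquad \text{for every } t \in [0, t^*).
\]

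To finish, I would show $t^* > 1$. For every $t \in [0,1]$, the relations $\alpha\cdot\beta \geq 0$, $\beta^2 \geq 0$ together with $t \leq 1$ give $\alpha_t^2 \geq \alpha^2 - 2\,\alpha\cdot\beta > 0$. If $t^* \leq 1$ held, continuity of the volume on the pseudo-effective cone would force $\vol_X(\alpha_{t^*}) \geq \alpha_{t^*}^2 > 0$, while $\alpha_{t^*}$ lies on the boundary of the big cone, where the volume vanishes---a contradiction. Therefore $\alpha - \beta$ is big and
\[
\vol_X(\alpha - \beta) \;\geq\; (\alpha - \beta)^2 \;=\; \alpha^2 - 2\,\alpha\cdot\beta + \beta^2 \;\geq\; \alpha^2 - 2\,\alpha\cdot\beta,
\]
which is the claim.

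The main technical point I expect to face is justifying that the pointwise derivative produced by Theorem~\ref{main differential} really implies the monotonicity of $G$ on the full interval $[0,t^*)$. I would close this gap by invoking two standard properties of the divisorial Zariski decomposition established by Boucksom, both already used in the paper: continuity of $\alpha \mapsto (Z(\alpha), N(\alpha))$ on the big cone, which makes $G'$ continuous and thus lets $G' \geq 0$ give $G$ non-decreasing, and continuity of $\vol_X$ up to the boundary of the big cone, needed for the last limiting step.
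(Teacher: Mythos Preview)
Your argument is correct and rests on the same core mechanism as the paper's proof: differentiate $\vol_X$ along the segment $\alpha-t\beta$ via Theorem~\ref{main differential} and use that $\beta$ is nef to control the derivative. The packaging differs in two respects worth noting. First, the paper establishes bigness of $\alpha-\beta$ \emph{before} integrating, by a separate Hodge-index argument (Theorem~\ref{criterion}): the whole segment $\alpha-t\beta$, $t\in[0,1]$, stays in the positive cone $\mathcal{P}\subset\mathcal{E}^\circ$. You instead bootstrap bigness from the inequality $\vol_X(\alpha_t)\ge\alpha_t^2$ itself, together with continuity of the volume up to $\partial\mathcal{E}$; this is a nice self-contained manoeuvre that avoids the auxiliary lemma, at the cost of invoking that boundary continuity. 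Second, the paper bounds $Z(\alpha_t)\cdot\beta\le\alpha\cdot\beta$ via the concavity of $Z$ (Theorem~\ref{Continuous}), whereas you use the sharper $Z(\alpha_t)\cdot\beta\le\alpha_t\cdot\beta$, i.e.\ $N(\alpha_t)\cdot\beta\ge0$; this yields the slightly stronger conclusion $\vol_X(\alpha-\beta)\ge(\alpha-\beta)^2$, improving the stated bound by $\beta^2\ge0$. Your caution about turning the pointwise derivative into monotonicity of $G$ is appropriate; the continuity of $Z$ on the big cone (Theorem~\ref{Continuous}) indeed makes the one-sided derivative supplied by Theorem~\ref{main differential} continuous, which suffices, and the paper handles the analogous integration the same way.
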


In higher dimension, we also have a differentiability formula for big classes on some special K\"ahler manifolds.
\begin{thm}\it
\label{morse high}
Let $X$ be a compact K\"ahler manifold of dimension $n$ on which the modified nef cone $\mathcal{MN}$ coincides with the nef cone $\mathcal{N}$. If $\alpha\in H^{1,1}(X,\mathbb{R})$ is a big class, $\beta \in H^{1,1}(X,\mathbb{R})$ is a nef class, then \begin{equation}
\label{volume formula}
\vol_X(\alpha+\beta) = \vol_X(\alpha)+n\int_{0}^{1} Z(\alpha+t\beta)^{n-1}\cdot \beta\ \mathrm{d}t.
\end{equation}
As a consequence, $\vol_X(\alpha+t\beta)$ is $\mathcal{C}^1$ for $t\in \mathbb{R}^+$ and we have
\begin{equation}
\left.\frac{d}{dt}\right|_{t=t_0}\vol_X(\alpha+t\beta)=nZ(\alpha+t_0\beta)^{n-1}\cdot \beta
\end{equation}
\label{diff higher}
for $t_0\geq0$.
\end{thm}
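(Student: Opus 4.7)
The plan is to extend the strategy used for the surface case (Theorem \ref{main differential}) to arbitrary dimension, reducing the computation of the derivative to the orthogonality property of the divisorial Zariski decomposition. Under the hypothesis $\mathcal{MN}=\mathcal{N}$, the positive part $Z(\gamma)$ of the Boucksom decomposition $\gamma=Z(\gamma)+N(\gamma)$ is nef for every big class $\gamma$, so $\vol_X(\gamma)=Z(\gamma)^n$. The three ingredients that will drive the proof are: continuity of $\gamma\mapsto Z(\gamma)$ on the big cone; local Lipschitz continuity of the Boucksom valuations $\gamma\mapsto\nu_E(\gamma)$ on the big cone; and the orthogonality $Z(\gamma)^{n-1}\cdot [E_j]=0$ for every prime divisor $E_j$ in the support of $N(\gamma)$, which under our hypothesis holds in the classical (non-positive-intersection) sense.

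Next I would establish the one-sided monotonicity that controls the local behaviour. If $\beta$ is nef, then for $h>0$ the class $\Delta_h:=N(\gamma)-N(\gamma+h\beta)$ is effective with support contained in $\mathrm{supp}\,N(\gamma)$, and each of its coefficients is $O(h)$ by the local Lipschitz bound on the $\nu_E$. Equivalently, $Z(\gamma+h\beta)=Z(\gamma)+h\beta+\Delta_h$ with $\Delta_h$ tending to $0$ at rate $O(h)$.

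The core of the proof is the right derivative at $\gamma$. Using the telescoping identity
$$Z(\gamma+h\beta)^n-Z(\gamma)^n=(h\beta+\Delta_h)\cdot\sum_{k=0}^{n-1}Z(\gamma+h\beta)^{n-1-k}\,Z(\gamma)^{k},$$
continuity of $Z$ gives $\sum_{k=0}^{n-1}Z(\gamma+h\beta)^{n-1-k}\,Z(\gamma)^{k}\to nZ(\gamma)^{n-1}$, so the $h\beta$ piece contributes $nh\,Z(\gamma)^{n-1}\cdot\beta+o(h)$. For each prime $E_j\subseteq\mathrm{supp}\,N(\gamma)$, the $k=n-1$ intersection vanishes by orthogonality at $\gamma$, while for $k<n-1$ one has $Z(\gamma+h\beta)^{n-1-k}\,Z(\gamma)^{k}\cdot E_j\to Z(\gamma)^{n-1}\cdot E_j=0$ by continuity of $Z$ and orthogonality; combined with the $O(h)$ bound on the coefficients of $\Delta_h$, this makes the $\Delta_h$ contribution $o(h)$. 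Hence
$$\vol_X(\gamma+h\beta)=\vol_X(\gamma)+nh\,Z(\gamma)^{n-1}\cdot\beta+o(h).$$
For the left derivative at $\gamma$, I would apply the right-derivative formula at $\gamma_k:=\gamma-k\beta$ for small $k>0$; continuity of $Z$ together with a uniform localization of the Lipschitz constants of the $\nu_E$ near $\gamma$ allow one to pass to the limit $k\to 0^+$ and match the right-derivative value.

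Once two-sided differentiability is proved at every $t$ for which $\alpha+t\beta$ is big, continuity in $t$ of the derivative $nZ(\alpha+t\beta)^{n-1}\cdot\beta$ follows from the continuity of $Z$; the $\mathcal{C}^1$ regularity of $t\mapsto\vol_X(\alpha+t\beta)$ and the integral formula (\ref{volume formula}) then drop out of the fundamental theorem of calculus. The main obstacle is the uniform control of the cross term $\Delta_h\cdot Z(\gamma+h\beta)^{n-1-k}\,Z(\gamma)^{k}$: this is where the argument genuinely demands combining the transcendental orthogonality of the divisorial Zariski decomposition at $\gamma$ with the local Lipschitz regularity of the valuations $\nu_E$ in the K\"ahler (as opposed to projective) setting, and forms the technical heart of the proof.
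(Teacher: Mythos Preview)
Your argument is correct and yields the same conclusion, but it follows a genuinely different route from the paper's proof. Both rely on the same key algebraic input---the orthogonality $Z(\gamma)^{n-1}\cdot E_j=0$ for every prime $E_j$ in $\mathrm{supp}\,N(\gamma)$, which under $\mathcal{MN}=\mathcal{N}$ the paper derives from Collins--Tosatti's $\mathrm{Null}=E_{nK}$. The difference is in how differentiability is extracted. The paper first fixes a short interval on which the prime components $N_1,\dots,N_r$ of $N(\alpha+t\beta)$ do not change, observes that the coefficients $a_i(t)$ are monotone hence a.e.\ differentiable, computes the derivative at those points using orthogonality, and only then upgrades to everywhere-$\mathcal{C}^1$ by integrating and invoking continuity of $Z$. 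Your approach instead proves two-sided differentiability directly at each point via the telescoping expansion, using local Lipschitz continuity of the $\nu_E$ (a consequence of their convexity) to control $\Delta_h$. The paper's route is softer---no explicit Lipschitz bounds, no separate left/right analysis---while yours is more direct and sidesteps the mild absolute-continuity issue hidden in passing from ``a.e.\ differentiable and monotone'' to the integral formula.

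One point in your sketch deserves sharpening. For the left derivative at $\gamma$, the phrase ``uniform localization of the Lipschitz constants'' is not quite the mechanism that makes $\Delta'_h\cdot\sum_k Z(\gamma)^{n-1-k}Z(\gamma_h)^k=o(h)$ work. What you actually need is that $\mathrm{supp}\,N(\gamma-h\beta)$ is contained in the \emph{fixed finite set} $\mathrm{supp}\,N(\gamma-h_0\beta)$ for all $0<h\le h_0$ (this follows from $N(\gamma-h\beta)\le N(\gamma-h_0\beta)$ when $\beta$ is nef), and then, for each prime $E$ in that set, either $E$ eventually leaves $\mathrm{supp}\,N(\gamma-h\beta)$ (so its coefficient in $\Delta'_h$ vanishes), or $Z(\gamma-h\beta)^{n-1}\cdot E=0$ along a sequence $h\to 0$, whence $Z(\gamma)^{n-1}\cdot E=0$ by continuity of $Z$; in both cases the contribution is $o(h)$. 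With this clarification the left-derivative step goes through.
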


Finally, we study the generalized Okounkov bodies for pseudo-effective classes in K\"ahler surfaces. We summerize our results as follows
\begin{thm}\it
\label{okounkov psf}
Let $X$ be a K\"ahler surface, $\alpha$ be any pseudo-effective but not big class,
\begin{enumerate}[\upshape (i)]
\item if the numerical dimension $n(\alpha)=0$, then for any irreducible curve $C$ which is not contained in the negative part $N(\alpha)$, we have the generalized Okounkov body 
$$\Delta_{(C,x)}(\alpha)=0\times \nu_x(N(\alpha)|_C),$$
where $\nu_x(N(\alpha)|_C)=\nu(N(\alpha)|_C,x)$ is the Lelong number of $N(\alpha)$ at $x$;
\item if $n(\alpha)=1$, then for any irreducible curve $C$ satisfying $Z(\alpha)\cdot C>0$, we have
$$\Delta_{(C,x)}(\alpha)=0\times [\nu_x(N(\alpha)|_C),\nu_x(N(\alpha)|_C)+Z(\alpha)\cdot C]\nonumber.$$
\end{enumerate}
In particular, the numerical dimension determines the dimension of the generalized Okounkov body.
\end{thm}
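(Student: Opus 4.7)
The plan is to analyze every closed positive $(1,1)$-current $T$ representing $\alpha$, compute its valuation vector $\nu_{(C,x)}(T)$, and track which vectors arise. The divisorial Zariski decomposition $\alpha = Z(\alpha) + N(\alpha)$ is the central tool: since $N(\alpha)$ is the minimal negative part, every positive current $T\in\alpha$ dominates $[N(\alpha)]$, so I may write
$$
T=[N(\alpha)]+S,\qquad S\geq 0,\ \{S\}=Z(\alpha).
$$
Because $C$ is not a component of $N(\alpha)$, the generic Lelong number $\nu_C([N(\alpha)])$ vanishes and $N(\alpha)|_C$ is a well-defined effective divisor on $C$; hence
$$
\nu_{(C,x)}(T)=\bigl(\nu_C(S),\ \nu_x(N(\alpha)|_C)+\nu_x\bigl((S-\nu_C(S)[C])|_C\bigr)\bigr).
$$

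\textbf{Case (i).} If $n(\alpha)=0$, the nef class $Z(\alpha)$ has numerical dimension zero, forcing $Z(\alpha)=0$ in $H^{1,1}(X,\mathbb{R})$. Any closed positive current $S$ with trivial class has vanishing mass $\int S\wedge\omega=0$, hence $S=0$. Therefore $T=[N(\alpha)]$ is the unique positive representative of $\alpha$, and $\Delta_{(C,x)}(\alpha)$ reduces to the single point $(0,\nu_x(N(\alpha)|_C))$.

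\textbf{Case (ii).} Now $Z(\alpha)$ is a nontrivial nef class with $Z(\alpha)^2=0$. Suppose $\nu_C(S)=t>0$; then $S-t[C]\geq 0$ represents the pseudo-effective class $Z(\alpha)-tC$. Pairing against the nef class $Z(\alpha)$ gives $Z(\alpha)\cdot(Z(\alpha)-tC)\geq 0$, i.e.\ $0=Z(\alpha)^2\geq t\,Z(\alpha)\cdot C$, contradicting $Z(\alpha)\cdot C>0$. Hence $\nu_C(S)=0$, pinning the first coordinate to zero. Consequently $S|_C$ is a well-defined positive measure on $C$ of total mass $S\cdot C=Z(\alpha)\cdot C$, so $\nu_x(S|_C)\in[0,\,Z(\alpha)\cdot C]$, which yields
$$
\Delta_{(C,x)}(\alpha)\subseteq \{0\}\times\bigl[\nu_x(N(\alpha)|_C),\ \nu_x(N(\alpha)|_C)+Z(\alpha)\cdot C\bigr].
$$

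For surjectivity onto this segment, my strategy is K\"ahler approximation. Since $Z(\alpha)$ is nef, $Z(\alpha)+\varepsilon\omega$ is K\"ahler for every $\varepsilon>0$; inside each such class, Demailly-type constructions (singular metrics built from plurisubharmonic functions with prescribed logarithmic poles at $x$, cut off and balanced against a K\"ahler majorant) produce positive currents $S_s^{\varepsilon}$ whose restriction to $C$ carries an atom of mass $s$ at $x$, for any prescribed $s\in[0,Z(\alpha)\cdot C)$. Extracting a weak limit as $\varepsilon\to 0$ and using upper semicontinuity of Lelong numbers along $C$ produces $S_s\in Z(\alpha)$ realizing the required value; the endpoint $s=Z(\alpha)\cdot C$ is then captured by the closure in the definition of $\Delta_{(C,x)}(\alpha)$.

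The main obstacle is precisely this surjectivity step: because $Z(\alpha)$ lies on the boundary of the big cone with $Z(\alpha)^2=0$, no direct mass-distribution argument inside the class is available, and control of Lelong numbers must be transported through the K\"ahler approximation. The hypothesis $Z(\alpha)\cdot C>0$ is crucial, since it guarantees that the approximating currents retain nontrivial restriction to $C$ in the limit, so that the target interval is genuinely one-dimensional and the claim ``numerical dimension determines the dimension of the generalized Okounkov body'' holds.
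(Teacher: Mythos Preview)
Your proposal is built on the wrong object. In the paper, for a pseudo-effective but not big class the generalized Okounkov body is \emph{defined} as $\Delta(\alpha):=\bigcap_{\varepsilon>0}\Delta(\alpha+\varepsilon\omega)$, not as the closure of the valuation vectors of positive currents in $\alpha$ itself. Your Case~(i) argument that $T=[N(\alpha)]$ is the unique positive representative is correct and elegant, but it computes a different set; it does not by itself tell you what $\bigcap_{\varepsilon}\Delta(\alpha+\varepsilon\omega)$ is. In Case~(ii) the mismatch is more serious: your containment argument bounds the valuation vectors of currents in $\alpha$, but $\Delta(\alpha)$ as defined could a priori be strictly larger than that set, so you have not established $\Delta(\alpha)\subseteq\{0\}\times[\cdots]$. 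There is also a technical gap: for an arbitrary positive current $S$ with $\nu_C(S)=0$ the restriction $S|_C$ need not be well-defined (the paper's valuation map is only defined for currents with analytic singularities), so $\nu_x(S|_C)$ is not a legitimate quantity in general. Finally, the surjectivity step via weak limits is not solid: Lelong numbers are not continuous under weak convergence, and there is no mechanism ensuring the limit current has analytic singularities or a well-defined restriction to $C$.

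The paper's proof stays entirely on the big side and never needs to construct currents in the boundary class. It first determines the divisorial Zariski decomposition of $\alpha+\varepsilon\omega$ explicitly (Proposition~\ref{Zariski pseudo}): $Z(\alpha+\varepsilon\omega)=Z(\alpha)+\varepsilon(\omega+\sum b_iN_i)$ and $N(\alpha+\varepsilon\omega)=\sum(a_i-\varepsilon b_i)N_i$. For $n(\alpha)=0$ this gives $\Delta(\alpha+\varepsilon\omega)=\varepsilon\Delta(\omega+\sum b_iN_i)+\nu(N(\alpha+\varepsilon\omega))$, whose diameter tends to~$0$, forcing the intersection to be a point. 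For $n(\alpha)=1$ it uses Lemma~\ref{restrict body} to compute the slice $\Delta(\alpha+\varepsilon\omega)\cap(\{0\}\times\mathbb{R})$ exactly as an interval converging to the target, and then argues that since $\vol_{\mathbb{R}^2}(\Delta(\alpha+\varepsilon\omega))\to 0$ while the slice has positive length, convexity forces $\Delta(\alpha)$ to equal that slice. You should recast your argument in these terms: work with $\alpha+\varepsilon\omega$, where all the machinery of the big case (Theorem~\ref{Okounkov}, Lemma~\ref{restrict body}) is available, and take the intersection.
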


\section{Technical preliminaries}
\label{preliminaries}

\subsection{Siu decomposition}
\label{Siu}
Let $T$ be a closed positive current of bidegree $(p,p)$ on a complex manifold $X$. We denote by $\nu(T,x)$ its Lelong number at a point $x\in X$. For any $c>0$, the Lelong upperlevel sets are defined by
$$
E_c(T):=\{x\in X, \nu(T,x)\geq c\}.
$$
In \cite{Siu74}, Siu proved that $E_c(T)$ is an analytic subset of $X$, of codimension at least $p$. Moreover, $T$ can be written as a convergent series of closed positive currents
$$
T=\displaystyle\sum_{k=1}^{+\infty}\nu(T,Z_k) [Z_k]+R
$$
where $[Z_k]$ is a current of integration over an irreducible analytic set of dimension $p$, and $R$ is a residual current with the property that $\dim E_c(R)<p$ for every $c>0$. This decomposition is locally and globally unique: the sets $Z_k$ are precisely the $p$-dimensional components occurring in the upperlevel sets $E_c(T)$, and $\nu(T,Z_k):=\inf\{\nu(T,x)|x\in Z_k\}$ is the generic Lelong number of $T$ along $Z_k$.

\subsection{Currents with analytic singularities}
\label{analytic singularity}

A closed positive (1,1) current $T$ on a compact complex manifold $X$ is said to have analytic  (resp. algebraic) singularities along a subscheme $V(\mathcal{I})$ defined by an ideal $\mathcal{I}$ if there exists some $c\in \mathbb{R}_{>0}$ (resp. $\mathbb{Q}_{>0}$) such that locally we have
$$
T=\frac{c}{2}dd^c\log(|f_1|^2+\ldots+|f_k|^2)+dd^cv
$$
where $f_1,\ldots,f_k$ are local generators of $\mathcal{I}$ and $v\in L^{\infty}_{\rm loc}$ (resp. and additionally, $X$ and $V({\cal I})$ are algebraic). Moreover, if $v$ is smooth, $T$ will be said to have mild analytic singularities. In these situations, we call the sum $\sum\nu(T,D)D$ which appears in the Siu decomposition of $T$ the divisorial part of $T$. Using the Lelong-Poincar\'e formula, it is straightforward to check that the divisorial part $\sum\nu(T,D)D$ of a closed (1,1)-current $T$ with analytic singularities along the subscheme $V(\mathcal{I})$ is just the divisorial part of $V(\mathcal{I})$, times the constant $c>0$ appearing in the definition of analytic singularities. The residual part $R$ has analytic singularities in codimension at least~$2$. If we denote $E_+(T):=\{x\in X|\nu(T,x)>0\}$, then $E_+(T)$ is exactly the support of $V(\mathcal{I})$. Moreover, if $V\not\subseteq E_+(T)$ for some smooth variety~$V$, $T|_V:=\frac{c}{2}dd^clog(|f_1|^2+\ldots+|f_k|^2)|_V+dd^cv|_V$ is well defined, for $|f_1|^2+\ldots+|f_k|^2$ and $v$ are not identically equal to $-\infty$ on $V$. It is easy to check that this definition does not depend on the choice of the local potential of $T$.

\begin{defin}[Non-K\"ahler locus]
If $\alpha\in H^{1,1}_{{\partial}\overline{\partial}}(X,\mathbb{R})$ is a big class, we define its \emph{non-K\"ahler locus} as $E_{nK}:=\bigcap_TE_+(T)$ for $T$ ranging among the K\"ahler currents in $\alpha$.
\end{defin}

We will usually use the following theorem due to Collins and Tosatti.

\begin{thm}[\cite{CT13}]\it
\label{Tosatti}
Let $X$ be a compact K\"ahler manifold of dimension $n$. Given a nef and big class $\alpha$, we define a subset of $X$ which measures
its non-K\"ahlerianity, namely the null locus
$$\text{\rm Null}(\alpha):=\bigcup_{\int_{V}^{} \alpha^{\text{dim}V}=0}V,$$
where the union is taken over all positive dimensional irreducible
analytic subvarieties of X. Then we have $$\text{\rm Null}(\alpha)=E_{nK}(\alpha).$$
\end{thm}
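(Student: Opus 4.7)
The strategy is to prove the two inclusions $\mathrm{Null}(\alpha) \subseteq E_{nK}(\alpha)$ and $E_{nK}(\alpha) \subseteq \mathrm{Null}(\alpha)$ separately. The direction $\mathrm{Null}(\alpha) \subseteq E_{nK}(\alpha)$ is the easier one and should follow by Demailly regularization together with a computation on a log resolution. The reverse inclusion $E_{nK}(\alpha) \subseteq \mathrm{Null}(\alpha)$ is the substantive content: I would prove it by mass concentration combined with an induction on dimension.

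For $\mathrm{Null}(\alpha) \subseteq E_{nK}(\alpha)$, I would fix an irreducible analytic subvariety $V$ with $\int_V \alpha^{\dim V} = 0$ and argue by contradiction, assuming some K\"ahler current $T \in \alpha$ satisfies $V \not\subseteq E_+(T)$. Using Demailly's approximation with analytic singularities, replace $T$ by K\"ahler currents $T_\epsilon \in \alpha + \epsilon\omega$ (for a fixed K\"ahler reference $\omega$) with analytic singularities along ideals $\mathcal{I}_\epsilon$ while retaining $V \not\subseteq E_+(T_\epsilon)$. Pass to a log resolution $\pi: \widetilde{X} \to X$ of $\mathcal{I}_\epsilon$ and write $\pi^* T_\epsilon = \theta_\epsilon + [E_\epsilon]$ with $\theta_\epsilon$ smooth and $\theta_\epsilon \geq \pi^*\omega - \delta_\epsilon \omega'$, where $\delta_\epsilon \to 0$. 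Restricting to the proper transform $\widetilde{V}$, which is not contained in $\mathrm{supp}(E_\epsilon)$, the projection formula yields
\[
\int_V (\alpha + \epsilon\omega)^{\dim V} \;\geq\; \int_{\widetilde V} (\theta_\epsilon|_{\widetilde V})^{\dim V} \;\geq\; c > 0
\]
uniformly in $\epsilon$, contradicting $\int_V \alpha^{\dim V} = 0$ upon letting $\epsilon \to 0$.

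For $E_{nK}(\alpha) \subseteq \mathrm{Null}(\alpha)$, I would proceed by induction on $\dim X$; the case $\dim X = 1$ is trivial since any nef and big class on a curve has empty non-K\"ahler locus. For the inductive step, pick $x \in E_{nK}(\alpha)$. By the Demailly-Paun mass concentration theorem, since $\alpha$ is nef and big one produces a K\"ahler current $T \in \alpha$ with Lelong number at $x$ arbitrarily close to $(\int_X \alpha^n)^{1/n}$; regularize to $T_\epsilon$ with analytic singularities, and select an irreducible component $W$ of $E_c(T_\epsilon)$ passing through $x$ at an appropriate level $c > 0$. If $\int_W \alpha^{\dim W} = 0$, then $W$ itself witnesses $x \in \mathrm{Null}(\alpha)$ and we are done. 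Otherwise, the restricted class $\alpha|_{\widetilde W}$ on a desingularization $\widetilde W \to W$ is nef and big, and one shows that $x$ still lies in the non-K\"ahler locus of $\alpha|_{\widetilde W}$: any K\"ahler current on $\widetilde W$ in that class would lift to a K\"ahler current on $X$ whose Lelong number at $x$ is zero modulo controlled corrections, contradicting $x \in E_{nK}(\alpha)$. Induction on $\dim \widetilde W < \dim X$ produces a subvariety $V \subseteq \widetilde W$ through the preimage of $x$ with $\int_V (\alpha|_{\widetilde W})^{\dim V} = 0$; its image in $X$ is the desired element of $\mathrm{Null}(\alpha)$.

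The main obstacle will be making the inductive restriction rigorous in the transcendental K\"ahler (not necessarily projective) setting: one must ensure that $\alpha|_{\widetilde W}$ is a well-defined big $(1,1)$-class and that $x$ lies in its non-K\"ahler locus, despite the fact that $W \subseteq E_+(T_\epsilon)$ so naive restriction of K\"ahler currents is problematic. The remedy is to exploit that $T_\epsilon$ has mild analytic singularities so that the Siu decomposition from Section \ref{Siu} is well behaved, and to work on a modification where the current becomes smooth plus an effective divisor as in Section \ref{analytic singularity}. A secondary subtlety is to rule out the degenerate case where the component of $E_c(T_\epsilon)$ through $x$ reduces to the point itself; this is handled by observing that mass concentration produces positive-dimensional level sets whenever the Lelong number exceeds a controlled threshold.
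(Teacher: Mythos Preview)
The paper does not prove this theorem at all: Theorem~\ref{Tosatti} is quoted from \cite{CT13} (Collins--Tosatti) and is used as a black box in Section~3. There is therefore no ``paper's own proof'' to compare your attempt against.

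That said, your sketch is broadly in the spirit of the actual Collins--Tosatti argument, but it glosses over exactly the step that constitutes the main content of \cite{CT13}. In your inductive step you write that ``any K\"ahler current on $\widetilde W$ in that class would lift to a K\"ahler current on $X$ whose Lelong number at $x$ is zero modulo controlled corrections.'' This extension of K\"ahler currents from a subvariety to the ambient manifold is precisely the hard theorem that Collins and Tosatti prove (closely related to \cite{CT14}), and it requires a delicate gluing construction. Treating it as something one simply ``shows'' is the gap in your proposal.

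Two smaller points. First, the appeal to Demailly--Paun mass concentration is unnecessary and somewhat misplaced: since $x\in E_{nK}(\alpha)$, \emph{every} K\"ahler current with analytic singularities in $\alpha$ already has $x$ in its singular support, so you may take $W$ to be any irreducible component through $x$ of $E_+(T)$ for any such $T$; there is no need to arrange a large Lelong number. Second, your proposed remedy for the case where the component through $x$ is zero--dimensional (``mass concentration produces positive-dimensional level sets whenever the Lelong number exceeds a controlled threshold'') is not how this is handled; an isolated singular point can be removed directly by a local modification, and in any case Demailly--Paun does not give the dimensional control you claim.
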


\subsection{Regularization of currents}
We will need Demailly's regularization theorem for closed (1,1)-currents, which enables us to approximate a given current by currents with analytic singularities, with a loss of positivity that is arbitrary small.  In particular, we could approximate a K\"ahler current $T$ inside its cohomology class by K\"ahler currents $T_k$ with algebraic singularities, with a good control of the singularities. A big class therefore contains plenty of K\"ahler currents with analytic singularities.
\begin{thm}\it
\label{Demailly}
Let $T$ be a closed almost positive (1,1)-current on a compact  complex manifold $X$, and fix a Hermitian form $\omega$. Suppose that $T\geq \gamma$ for some real (1,1)-form $\gamma$ on $X$. Then there exists a sequence $T_k$ of currents with algebraic singularities in the cohomology class $\{T\}$ which converges weakly to T, such that $T_k\geq \gamma-\epsilon_k \omega$ for some sequence $\epsilon_k>0$ decreasing to 0, and $\nu(T_k,x)$ increases to $\nu(T,x)$ uniformly with respect to $x\in X$.
\end{thm}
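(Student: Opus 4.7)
The plan is to follow Demailly's original approach via Bergman kernel regularization. Locally, write $T = \gamma + dd^c\varphi$ where $\varphi$ is quasi-plurisubharmonic, so that after adding a local potential of $\gamma$ we reduce to smoothing a plurisubharmonic function. On a coordinate ball $B\subset\cb^n$, let $\mathcal{H}_m$ be the Hilbert space of holomorphic functions $f$ on $B$ with $\int_B |f|^2 e^{-2m\varphi}\,dV_n<\infty$, choose an orthonormal basis $\{\sigma_{m,j}\}$, and set
$$\varphi_m = \frac{1}{2m}\log\sum_j |\sigma_{m,j}|^2.$$
Standard Bergman kernel estimates (sub-mean value inequality in one direction, and an $L^2$ test function peaked at a given point in the other) give
$$\varphi - \frac{C_1}{m} \leq \varphi_m \leq \sup_{|\zeta-x|<r}\varphi(\zeta) + \frac{C_2}{m}\log\frac{1}{r},$$
which forces $\varphi_m\to\varphi$ pointwise and in $L^1_{\rm loc}$. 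The crucial structural fact is that $\nu(\varphi_m,x) = \max(0,\nu(\varphi,x)-n/m)$, which is the source of the uniform convergence of Lelong numbers. Moreover, $dd^c\varphi_m$ has analytic singularities along the ideal sheaf generated by the $\sigma_{m,j}$.

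The main obstacle is globalization: one needs to produce a single global current $T_k$ on $X$ whose local pieces agree (up to bounded error) with the Bergman regularizations on a finite coordinate cover $\{B_\alpha\}$. Here the essential input is the Ohsawa--Takegoshi $L^2$ extension theorem, applied to the weight $e^{-2m\varphi}$ together with a Hermitian metric on an auxiliary line bundle built from $\gamma$ and $\omega$. Extension lets one lift any locally defined section of $\mathcal{H}_m(B_\alpha\cap B_\beta)$ to a global $L^2$ section with a controlled norm bound; gluing via a partition of unity then produces a global approximation of $\varphi$ whose local discrepancy from $\varphi_m$ is $O(1/m)$.

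Tracking the positivity loss is where the estimates have to be done carefully: the local regularization already loses $O(1/m)\omega$ of positivity from the Bergman construction, and the gluing via cutoff functions and Ohsawa--Takegoshi introduces an additional term bounded by $C/m$ times the curvature of the auxiliary metric, which is dominated by $\omega$. Summing these contributions, one obtains $T_k\geq \gamma-\epsilon_k\omega$ with $\epsilon_k = O(1/k)\to 0$ by choosing $m=k$.

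Finally, weak convergence $T_k\to T$ follows from $L^1_{\rm loc}$ convergence of the global potentials, and the uniform convergence $\nu(T_k,x)\to\nu(T,x)$ follows from the local identity $\nu(\varphi_m,x)=\max(0,\nu(\varphi,x)-n/m)$, since this bound is independent of $x$ and the gluing procedure changes Lelong numbers by at most $O(1/m)$. The monotonicity $\nu(T_k,x)\nearrow \nu(T,x)$ is then arranged by passing to a suitable subsequence where the parameter $m$ is chosen along a sufficiently sparse sequence so that the $O(1/m)$ errors are absorbed.
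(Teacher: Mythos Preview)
The paper does not supply its own proof of this statement: Theorem~\ref{Demailly} is quoted in the preliminaries section as Demailly's regularization theorem, with the reference \cite{Dem92} implicit in the surrounding discussion, and the text moves directly on to the next subsection. So there is nothing in the paper to compare your argument against.

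That said, your sketch is a reasonable outline of Demailly's original proof. The local Bergman kernel construction, the two-sided estimate on $\varphi_m$, and the Lelong number control $\nu(\varphi,x)-n/m\leq \nu(\varphi_m,x)\leq \nu(\varphi,x)$ are exactly the core ingredients. Two points deserve tightening. First, your description of globalization via ``an auxiliary line bundle built from $\gamma$ and $\omega$'' is slightly off for a general compact complex manifold, where no such line bundle need exist; Demailly's actual gluing compares the local Bergman kernels on overlaps directly (using Ohsawa--Takegoshi on balls, not on $X$) and patches the resulting almost-psh functions, so the argument is purely local-to-global without any global line bundle. Second, the claimed monotonicity $\nu(T_k,x)\nearrow\nu(T,x)$ is not obtained by ``passing to a suitable subsequence''; it comes from the structural inequality $\nu(\varphi_m,x)\leq \nu(\varphi,x)$ together with the fact that one can arrange $\varphi_{m+1}\leq \varphi_m + C_m$ by taking appropriate maxima, which forces the Lelong numbers to increase with $m$. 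Apart from these imprecisions, the strategy is correct and is the standard one.
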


\subsection{Currents with minimal singularities}
Let $T_1=\theta_1+dd^c\varphi_1$ and $T_2=\theta_2+dd^c\varphi_2$ be two closed almost positive (1,1)-currents on $X$, where $\theta_i$ are smooth forms and $\varphi_i$ are almost pluri-subharmonic functions, we say that $T_1$ is less singular than $T_2$ (write $T_1\preceq T_2$) if we have $\varphi_2\leq \varphi_1+C$ for some constant $C$.

Let $\alpha$ be a class in $H^{1,1}_{{\partial}\overline{\partial}}(X,\mathbb{R})$ and $\gamma$ be a smooth real (1,1)-form, we denote by $\alpha[\gamma]$ the set of closed almost positive (1,1)-currents $T\in \alpha$ with $T\geq\gamma$. Since the set of potentials of such currents is stable by taking a supremum, we conclude by standard pluripotential theory that there exists a closed almost positive (1,1)-current $T_{\min,\gamma}\in \alpha[\gamma]$ which has minimal singularities in $\alpha[\gamma]$. $T_{\min,\gamma}$ is well defined modulo $dd^cL^{\infty}$. For each $\epsilon>0$, denote by $T_{\min,\epsilon}=T_{\min,\epsilon}(\alpha)$ a current with minimal singularities in $\alpha[-\omega]$, where $\omega$ is some reference Hermitian form. The minimal multiplicity at $x\in X$ of the pseudo-effective class $\alpha\in H^{1,1}_{{\partial}\overline{\partial}}(X,\mathbb{R})$
 is defined as
$$
\nu(\alpha,x):=\sup_{\epsilon>0}\nu(T_{\min,\epsilon},x).
$$
For a prime divisor $D$, we define  the generic minimal multiplicity of $\alpha$ along $D$ as
$$
\nu(\alpha,D):=\inf\{\nu(\alpha,x)|x\in D)\}.
$$
We then have $\nu(\alpha,D)=\sup_{\epsilon>0}\nu(T_{\min,\epsilon},D)$.

\subsection{Lebesgue decomposition}
A current $T$ can be locally seen as a form with distribution coefficients.  When $T$ is positive, the distributions are positive measures which admit a Lebesgue decomposition into an absolutely continuous part (with respect to the Lebesgue measure on $X$) and a singular part. Therefore we obtain the decomposition $T=T_{\rm ac}+T_{\rm sing}$, with $T_{\rm ac}$ (resp. $T_{\rm sing}$) globally determined thanks to the uniqueness of the Lebesgue decomposition.

Now we assume that $T$ is a (1,1)-current. The absolutely continuous part $T_{\rm ac}$ is considered as a (1,1)-form with $L^1_{\rm loc}$ coefficients, and more generally we have $T_{\rm ac} \geq \gamma$ whenever $T\geq \gamma$ for some real smooth real form $\gamma$. Thus we can define the product $T^k_{\rm ac}$ of $T_{\rm ac}$ almost everywhere. This yields a positive Borel $(k,k)$-form.

\subsection{Modified nef cone and divisorial Zariski decomposition}
\label{divisorial}
In this subsection, we collect some definitions and properties of the modified nef cone and divisorial Zariski decomposition. See \cite{Bou04} for more details.
\begin{defin}
Let $X$ be compact complex manifold, and $\omega$ be some reference Hermitian form. Let $\alpha$ be a class in $H^{1,1}_{{\partial}\overline{\partial}}(X,\mathbb{R})$.
\begin{enumerate}[\upshape (i)]
\item $\alpha$ is said to be a \emph{modified K\"ahler class} iff it contains a K\"ahler current $T$ with $\nu(T,D)=0$ for all prime divisors $D$ in $X$.
\item $\alpha$ is said to be a \emph{modified nef class} iff, for every $\epsilon>0$, there exists a closed (1,1)-current $T_\epsilon\geq -\epsilon\omega$ and $\nu(T_\epsilon,D)=0$ for every prime $D$.
\end{enumerate}
\end{defin}

\begin{rem}
\label{mf=f}
The modified nef cone $\mathcal{MN}$ is a closed convex cone which contains the nef cone $\mathcal{N}$. When $X$ is a K\"ahler manifold, $\mathcal{MN}$ is just the interior of the modified K\"ahler cone $\mathcal{MK}$.
\end{rem}

\begin{rem}
\label{nef property}
For a complex surface, the K\"ahler (nef) cone  and the modified K\"ahler (modified nef) cone coincide. Indeed, analytic singularities in codimension 2 of a K\"ahler current $T$ are just  isolated points. Therefore the class $\{T\}$ is a K\"ahler class.
\end{rem}

\begin{defin}[Divisorial Zariski decomposition]
The \emph{negative part} of a pseudo-effective class $\alpha\in H^{1,1}_{{\partial}\overline{\partial}}(X,\mathbb{R})$ is defined as $N(\alpha):=\sum\nu(\alpha,D)D$. The \emph{Zariski projection} of $\alpha$ is $Z(\alpha):=\alpha-\{N(\alpha)\}$. We call the decomposition $\alpha=Z(\alpha)+\{N(\alpha)\}$ the \emph{divisorial Zariski decomposition of $\alpha$}.
\end{defin}

\begin{rem}
\label{bijection}
We claim that the volume of $Z(\alpha)$ is equal to the volume of $\alpha$. Indeed, if $T$ is a positive current in $\alpha$, then we have $T\geq N(\alpha)$ since $T\in \alpha[-\epsilon\omega]$ for each $\epsilon>0$ and we conclude that $T\mapsto T-N(\alpha)$ is a bijection between the positive currents in $\alpha$ and those in $Z(\alpha)$. Furthermore, we notice that $(T-N(\alpha))_{\rm ac}=T_{\rm ac}$, and thus by the definition of volume of the pseudo-effective classes we conclude that $\vol_X(\alpha)=\vol_X(Z(\alpha))$.
\end{rem}

\begin{defin}[Exceptional divisors]
\begin{enumerate}[\upshape (i)]
\item A family $D_1,\ldots\ ,D_q$ of prime divisors is said to be an \emph{exceptional family} iff the convex cone generated by their cohomology classes meets the modified nef cone at 0 only.
\item An effective $\mathbb{R}$-divisor $E$ is said to be \emph{exceptional} iff its prime components constitute an exceptional family.
\end{enumerate}
\end{defin}

We have the following properties of exceptional divisors:
\begin{thm}\it
\label{property exceptional}
\begin{enumerate}[\upshape (i)]
\item An effective $\mathbb{R}$-divisor E is exceptional iff $Z({E})=0$.
\item If E is an exceptional effective $\mathbb{R}$-divisor, we have $E=N(\{E\})$.
\item If $D_1,\ldots,D_q$ is an exceptional family of primes, then their classes $\{D_1\},\ldots,\{D_q\}$ are linearly independent in ${\rm NS}_{\mathbb{R}}(X)\subset H^{1,1}(X,\mathbb{R})$. In particular, the length of the exceptional families of primes is uniformly bounded by the Picard number $\rho(X)$.
\item Let $X$ be a surface, a family $D_1,\ldots,D_r$ of prime divisors is exceptional iff its intersection matrix $(D_i\cdot D_j)$ is negative definite.
\end{enumerate}
\end{thm}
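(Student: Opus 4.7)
The plan is to establish (i) first, derive (ii) and (iii) as essentially formal consequences, and finally address (iv) by invoking Hodge index together with the classical surface Zariski decomposition.

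For (i), the central input from Boucksom's theory is that the Zariski projection $Z(\beta)$ of every pseudo-effective class $\beta$ is modified nef. Given an effective $\mathbb{R}$-divisor $E=\sum_{i=1}^{q}a_iD_i$, the current of integration $[E]\in\{E\}$ has vanishing Lelong number along every prime $D\notin\{D_1,\ldots,D_q\}$, so $\nu(\{E\},D)=0$ for such $D$ and $\nu(\{E\},D_i)\le a_i$. Consequently
$$Z(\{E\})=\{E\}-\{N(\{E\})\}=\sum_{i=1}^{q}\bigl(a_i-\nu(\{E\},D_i)\bigr)\{D_i\}$$
is simultaneously a non-negative combination of the $\{D_i\}$ and a modified nef class. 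If $D_1,\ldots,D_q$ is exceptional, by definition the intersection of the two cones is $\{0\}$, forcing $Z(\{E\})=0$. Conversely, if some non-trivial non-negative combination $\alpha=\sum c_i\{D_i\}$ lies in $\mathcal{MN}$, then $N(\alpha)=0$ (modified nef classes have trivial negative part), so taking $E=\sum c_iD_i$ yields $Z(\{E\})=\alpha\ne 0$; this contrapositive delivers the converse.

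Statements (ii) and (iii) follow formally. For (ii), the computation in (i) shows that $F:=E-N(\{E\})$ is an effective $\mathbb{R}$-divisor with $\{F\}=0$; pairing the positive current $[F]$ with $\omega^{n-1}$ for a fixed K\"ahler form $\omega$ yields a positive measure of total mass $\{F\}\cdot\{\omega\}^{n-1}=0$, so $F=0$ and hence $E=N(\{E\})$. For (iii), write any linear relation $\sum c_i\{D_i\}=0$ as $\{E_+\}=\{E_-\}$ with $E_\pm=\sum_{\pm c_i>0}|c_i|D_i$ disjointly supported within the exceptional family. Both $E_\pm$ are exceptional, so by (i)--(ii) we have $E_\pm=N(\{E_\pm\})$; the equality $N(\{E_+\})=N(\{E_-\})$ combined with the disjointness of supports forces $E_+=E_-=0$. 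Thus $\{D_1\},\ldots,\{D_q\}$ are linearly independent in $H^{1,1}(X,\mathbb{R})$; since they all lie in $\mathrm{NS}_{\mathbb{R}}(X)$, the bound $q\le\rho(X)$ follows at once.

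For (iv) on a surface, Remark \ref{nef property} identifies $\mathcal{MN}$ with $\mathcal{N}$, so exceptionality amounts to the statement that no non-trivial non-negative combination $\sum c_i\{D_i\}$ is nef. If the intersection matrix $(D_i\cdot D_j)$ is negative definite, any such combination $\alpha$ satisfies $\alpha^2<0$, which rules out nefness. Conversely, if the intersection form on $\mathrm{span}\{D_i\}$ is not negative definite, pick a non-zero $(v_i)$ with $\sum v_iv_j(D_i\cdot D_j)\ge 0$ and split $v=v^+-v^-$ with disjoint index supports, setting $E^\pm=\sum v^\pm_iD_i$; since distinct primes satisfy $D_i\cdot D_j\ge 0$, one has $E^+\cdot E^-\ge 0$, so $(E^+)^2+(E^-)^2\ge 2E^+\cdot E^-\ge 0$, and therefore at least one of $(E^\pm)^2$ is non-negative. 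The corresponding $E$ lies in the positive cone, so its classical surface Zariski decomposition $\{E\}=P+N$ produces a non-zero nef class $P$ whose negative part $N$ is supported on the prime components of $E$, hence $P$ lies in the cone generated by $\{D_1\},\ldots,\{D_r\}$, contradicting exceptionality.

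The main obstacle I anticipate is the converse direction of (i): rigorously extracting a modified nef obstruction when exceptionality fails relies on several structural identities of Boucksom's divisorial Zariski decomposition (vanishing of $N(\alpha)$ on $\mathcal{MN}$, uniqueness of the decomposition, and compatibility with Siu-type decompositions of currents), which are genuinely non-trivial. A secondary subtlety arises in (iv), where one must ensure that the Zariski positive part $P$ actually stays inside the convex cone $\sum\mathbb{R}_{\ge 0}\{D_i\}$; this uses the surface-specific fact that the negative part of the Zariski decomposition of an effective divisor is supported on its own prime components.
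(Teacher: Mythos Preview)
The paper does not actually prove this theorem: it is quoted from Boucksom \cite{Bou04} and explicitly introduced with ``We state these properties without proofs.'' So there is no in-paper argument to compare against, and your outline follows the standard route one finds in Boucksom's work. Parts (ii) and (iii) are handled correctly. There are, however, two genuine gaps.

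\textbf{Gap in the converse of (i).} You must show that for the \emph{given} $E=\sum a_iD_i$ (with all $a_i>0$), non-exceptionality of $\{D_1,\dots,D_q\}$ forces $Z(\{E\})\neq 0$. What you actually prove is that some \emph{other} effective combination $E'=\sum c_iD_i$ has $Z(\{E'\})=\alpha\neq 0$; reusing the letter $E$ obscures the fact that this is not the same divisor. The fix is short: choose $t>0$ with $tc_i\le a_i$, so that $E-tE'$ is effective; then convexity of $N$ (Theorem~\ref{Continuous}) gives $N(\{E\})\le N(t\alpha)+N(\{E-tE'\})=N(\{E-tE'\})\le E-tE'$, whence $Z(\{E\})-t\alpha$ is the class of an effective divisor. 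Pairing with $\omega^{n-1}$ shows $Z(\{E\})\neq 0$. Without this step the contrapositive is incomplete.

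\textbf{Gap in (iv), direction ``exceptional $\Rightarrow$ negative definite''.} You pick $(v_i)\neq 0$ with $\sum v_iv_j\,D_i\cdot D_j\ge 0$ and deduce that one of $(E^{\pm})^2$ is \emph{non-negative}; you then assert that ``the corresponding $E$ lies in the positive cone'' and that its Zariski positive part $P$ is non-zero. But the positive cone $\mathcal P$ is the locus where $q>0$, so when the Gram matrix is merely negative \emph{semi}-definite (a zero eigenvalue, no positive one) you only get $(E^{\pm})^2=0$, the class need not be big, and $P=Z(\{E^{\pm}\})$ may well vanish. This case must be treated separately: if $Sv=0$ with $v\neq 0$, then $E^{+}\cdot D_j=E^{-}\cdot D_j$ for all $j$, and since $E^{-}\cdot D_j\ge 0$ whenever $D_j$ lies in the support of $E^{+}$ (disjoint supports), one finds $E^{+}\cdot D_j\ge 0$ for every $j$ and $E^{+}\cdot C\ge 0$ for every other curve $C$; thus $E^{+}$ itself is a non-zero nef class in the cone, contradicting exceptionality. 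Alternatively, one first reduces to the case where the Gram matrix has a strictly positive eigenvalue by invoking (iii) (linear independence) and Hodge index; either way, the semi-definite boundary case needs an explicit argument that your write-up omits.
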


In this paper, we need the following properties of the modified nef cone $\mathcal{MN}$ and the divisorial Zariski decomposition due to Boucksom (ref. \cite{Bou04}). We state these properties without proofs.

\begin{thm}\it
\label{modified big}
Let $\alpha\in H^{1,1}(X,\mathbb{R})$ be a pseudo-effective class. Then we have:
\begin{enumerate}[\upshape (i)]
\item Its Zariski projection $Z(\alpha)$ is a modified nef class.
\item $Z(\alpha)=\alpha$ iff $\alpha$ is modified nef.
\item $Z(\alpha)$ is big iff $\alpha$ is.
\end{enumerate}
\end{thm}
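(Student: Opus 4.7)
The plan is to prove the three parts in the order (iii), (ii), (i), since the argument for (i) invokes (ii).

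Part (iii) follows at once from Remark \ref{bijection}, which already provides $\vol_X(\alpha)=\vol_X(Z(\alpha))$; since bigness of a pseudo-effective class is equivalent to strictly positive volume, $\alpha$ is big if and only if $Z(\alpha)$ is big.

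Part (ii) is a direct consequence of the definition of $\nu(\alpha,D)$ as $\sup_\epsilon\nu(T_{\min,\epsilon},D)$ together with the definition of modified nefness. If $Z(\alpha)=\alpha$, then $N(\alpha)=0$, so $\nu(\alpha,D)=0$ for every prime $D$, and the automatic bound $\nu(T_{\min,\epsilon},D)\le \nu(\alpha,D)=0$ exhibits $T_{\min,\epsilon}$ itself as a witness of the modified nefness of $\alpha$. Conversely, if $\alpha$ is modified nef and $T_\epsilon\in\alpha[-\epsilon\omega]$ has $\nu(T_\epsilon,D)=0$ for every prime $D$, then minimality of $T_{\min,\epsilon}$ forces $\nu(T_{\min,\epsilon},D)\le \nu(T_\epsilon,D)=0$; passing to the supremum in $\epsilon$ yields $\nu(\alpha,D)=0$ for every $D$, so $N(\alpha)=0$ and $Z(\alpha)=\alpha$.

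Part (i) is the substantive one. The plan is first to check that $Z(\alpha)$ is pseudo-effective — immediate from Remark \ref{bijection}, since $T-N(\alpha)\ge 0$ is a positive representative of $Z(\alpha)$ whenever $T$ is a positive representative of $\alpha$ — and then to verify that $\nu(Z(\alpha),D)=0$ for every prime $D$, so that (ii) applied to the pseudo-effective class $Z(\alpha)$ finishes the argument. For the divisorial vanishing I would exploit the additivity of Lelong numbers on sums of positive currents, which on the decomposition $T=(T-N(\alpha))+N(\alpha)$ gives $\nu(T-N(\alpha),D)=\nu(T,D)-\nu(\alpha,D)$. Combined with the minimality of $T_{\min,\epsilon}(Z(\alpha))$ tested against the current $T-N(\alpha)\in Z(\alpha)[-\epsilon\omega]$, this yields
$$\nu(T_{\min,\epsilon}(Z(\alpha)),D)\le \nu(T,D)-\nu(\alpha,D)$$
for every positive current $T\in\alpha$, an inequality independent of $\epsilon$, so it passes to $\sup_\epsilon$.

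The main obstacle is to drive the right-hand side to zero, that is, to exhibit positive representatives of $\alpha$ whose Lelong number along $D$ is arbitrarily close to $\nu(\alpha,D)$. Since $\nu(\alpha,D)$ is defined only as a limit of Lelong numbers of the merely almost-positive currents $T_{\min,\epsilon}$, this requires converting almost-positive representatives of $\alpha$ into genuine positive ones with controlled singularities. The natural route is to apply Demailly's regularization (Theorem \ref{Demailly}) to the shifted positive currents $T_{\min,\delta}(\alpha)+\delta\omega\in\alpha+\delta\{\omega\}$, producing currents with analytic singularities whose generic Lelong numbers along $D$ can be controlled uniformly, and then to let $\delta\to 0$ and extract a weak limit in $\alpha$ by compactness. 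The delicate analytic point is to ensure that no extra mass concentrates along $D$ under this weak convergence, so that the identity $\inf\{\nu(T,D):T\in\alpha,\ T\ge 0\}=\nu(\alpha,D)$ actually holds; this is where the Kähler hypothesis on $X$ and the theory of currents with minimal singularities enter crucially. Once this identity is in hand, the displayed inequality forces $\nu(Z(\alpha),D)=0$ for every prime $D$, and (ii) applied to $Z(\alpha)$ closes the proof.
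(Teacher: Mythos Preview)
The paper does not prove this theorem at all: it is quoted from Boucksom \cite{Bou04} with the explicit caveat ``We state these properties without proofs.'' So there is no in-paper argument to compare against; what follows is an assessment of your attempt on its own merits.

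Your arguments for (ii) and (iii) are correct and essentially the standard ones. For (i) you have correctly located the crux --- the identity
\[
\inf\{\nu(T,D):T\in\alpha,\ T\ge 0\}=\nu(\alpha,D)
\]
(equivalently $\nu(T_{\min,0}(\alpha),D)=\nu(\alpha,D)$) --- but the route you propose to it does not work. Regularizing $T_{\min,\delta}+\delta\omega$ and passing to a weak limit as $\delta\to 0$ gives no upper control on Lelong numbers: for weakly convergent positive currents the Lelong number at a point (or the generic Lelong number along a divisor) can jump \emph{up} in the limit, as the elementary example $[z=1/k]\to[z=0]$ on $\mathbb{C}$ already shows. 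Your sentence ``this is where the K\"ahler hypothesis on $X$ and the theory of currents with minimal singularities enter crucially'' names the hope but not the mechanism, and compactness of weak limits is simply the wrong tool here.

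The clean way to close the gap avoids weak limits entirely. The potentials $\varphi_\epsilon$ of $T_{\min,\epsilon}(\alpha)$ form a family that \emph{decreases} to the potential $\varphi_0$ of $T_{\min,0}(\alpha)$ as $\epsilon\downarrow 0$ (larger $\epsilon$ means a larger envelope). For decreasing sequences of quasi-psh functions one has the elementary monotone-convergence property of Lelong numbers, $\nu(dd^c\varphi_0,x)=\lim_{\epsilon\to 0}\nu(dd^c\varphi_\epsilon,x)$, which immediately gives $\nu(T_{\min,0}(\alpha),D)=\sup_{\epsilon>0}\nu(T_{\min,\epsilon}(\alpha),D)=\nu(\alpha,D)$. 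Feeding this into your displayed inequality with $T=T_{\min,0}(\alpha)$ yields $\nu(Z(\alpha),D)=0$ for every prime $D$, and then (ii) finishes (i) exactly as you planned. This is essentially how Boucksom argues in \cite{Bou04}.
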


\begin{rem}
Let $X$ be a complex K\"ahler surface. For a big class $\alpha\in H^{1,1}(X,\mathbb{R})$, $Z(\alpha)$ is a big and modified nef class. By Remark \ref{mf=f}, any modified nef class is nef, it follows that $Z(\alpha)$ is big and nef.
\end{rem}

\begin{thm}\it
\label{Continuous}
\begin{enumerate}[\upshape (i)]
\item The map $\alpha \mapsto N(\alpha)$ is convex and homogeneous on pseudo-effective class cone $ \mathcal{E}$. It is continuous on the interior of $ \mathcal{E}$.
\item The Zariski projection $Z:\mathcal{E}\rightarrow \mathcal{MN} $ is concave and homogeneous. It is continuous on the interior of $\mathcal{E}$.
\end{enumerate}
\end{thm}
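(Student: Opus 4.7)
The plan is to establish homogeneity and convexity of $N$ first, derive the corresponding properties of $Z = \mathrm{id} - N$ automatically, and finally address continuity on the interior of $\mathcal{E}$.

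For homogeneity, observe that for $\lambda > 0$ the map $T \mapsto \lambda T$ is an order-preserving bijection from $\alpha[-\epsilon\omega]$ onto $(\lambda\alpha)[-\lambda\epsilon\omega]$, so $T_{\min,\lambda\epsilon}(\lambda\alpha)$ and $\lambda T_{\min,\epsilon}(\alpha)$ agree modulo bounded potentials; since Lelong numbers are $1$-homogeneous in the current, $\nu(\lambda\alpha,x) = \lambda\nu(\alpha,x)$, whence $N(\lambda\alpha) = \lambda N(\alpha)$. For convexity I would exploit subadditivity of minimal multiplicities: if $T_i \in \alpha_i[-\epsilon\omega]$ have minimal singularities, then $T_1+T_2 \in (\alpha_1+\alpha_2)[-2\epsilon\omega]$, so any minimal-singularity representative $T_{\min,2\epsilon}(\alpha_1+\alpha_2)$ satisfies $T_{\min,2\epsilon}(\alpha_1+\alpha_2) \preceq T_1+T_2$. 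Passing to Lelong numbers and letting $\epsilon \to 0^+$ (using that $\nu(T_{\min,\epsilon}(\cdot),x)$ is monotone non-decreasing as $\epsilon \downarrow 0$, so the sup in the definition of $\nu(\alpha,x)$ is a limit) yields $\nu(\alpha_1+\alpha_2,x) \leq \nu(\alpha_1,x)+\nu(\alpha_2,x)$, hence $N(\alpha_1+\alpha_2) \leq N(\alpha_1)+N(\alpha_2)$ as effective divisors. Combined with homogeneity this gives convexity of $N$, and concavity of $Z$ follows at once from
\[
Z(\lambda\alpha_1+(1-\lambda)\alpha_2) - \lambda Z(\alpha_1) - (1-\lambda)Z(\alpha_2) = \lambda N(\alpha_1)+(1-\lambda)N(\alpha_2) - N(\lambda\alpha_1+(1-\lambda)\alpha_2) \geq 0.
\]

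For continuity on the interior, I would reduce to scalar-valued projections. For any linear functional $\ell$ on $H^{1,1}(X,\mathbb{R})$ nonnegative on $\mathcal{E}$ — for instance $\ell(\cdot) := (\cdot)\cdot\omega^{n-1}$ for a K\"ahler form $\omega$ — the composition $\alpha \mapsto \ell(\{N(\alpha)\})$ is convex, homogeneous, and nonnegative on $\mathcal{E}$ by the previous step. Since $Z(\alpha) \in \mathcal{MN} \subseteq \mathcal{E}$ by Theorem \ref{modified big}(i), one has $\{N(\alpha)\} \leq \alpha$ in the pseudo-effective order, so $\ell(\{N(\alpha)\}) \leq \ell(\alpha)$ is finite everywhere on $\mathcal{E}$. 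Classical convex analysis then guarantees that a finite convex function on an open convex subset of the finite-dimensional space $H^{1,1}(X,\mathbb{R})$ is automatically continuous, so $\ell \circ \{N\}$ is continuous on $\mathrm{int}(\mathcal{E})$. Because $\mathcal{E}$ is a pointed cone with non-empty interior (as $X$ is compact K\"ahler), its dual cone $\mathcal{E}^*$ spans $H^{1,1}(X,\mathbb{R})^*$, so continuity against a spanning family of such $\ell$ upgrades to continuity of the class-valued map $\alpha \mapsto \{N(\alpha)\}$, and hence of $Z$.

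The main obstacle is the subadditivity step: a sum of two $(-\epsilon\omega)$-minimal currents need not itself be minimal in the sum class, so one must pass through the doubled error term $-2\epsilon\omega$ and then take $\epsilon \to 0$ using the uniform Lelong number control supplied by Demailly's regularization (Theorem \ref{Demailly}). By contrast, once pointwise convexity and homogeneity of $\nu(\cdot,D)$ are in hand, the continuity statement is essentially classical convex analysis applied component-wise, combined with the K\"ahler hypothesis to ensure $\mathcal{E}$ is full-dimensional and pointed.
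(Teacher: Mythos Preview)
The paper does not prove this statement; it is quoted without proof from Boucksom's work \cite{Bou04} as one of several background results in Section~\ref{divisorial}. Your argument is correct and is essentially the standard one: homogeneity is immediate from scaling, subadditivity of minimal multiplicities follows exactly as you describe by summing minimal-singularity representatives and letting $\epsilon\to 0$, and interior continuity is then classical convex analysis in finite dimensions.

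One small refinement is worth noting. Your dual-cone argument yields continuity of the \emph{class}-valued map $\alpha\mapsto\{N(\alpha)\}$, whereas the paper uses continuity of $N$ as a \emph{divisor}-valued map (for instance in Lemma~\ref{component}, to conclude that the prime components of $N(\alpha+t\beta)$ are locally constant in $t$). To get this stronger conclusion one argues directly that for each fixed prime $D$ the coefficient function $\alpha\mapsto\nu(\alpha,D)$ is convex, homogeneous, nonnegative, and finite on $\mathrm{int}(\mathcal{E})$, hence continuous there; the uniform bound on the number of prime components of $N(\alpha)$ furnished by Theorem~\ref{property exceptional}(iii) then upgrades coefficient-wise continuity to continuity of $N$ as an effective $\mathbb{R}$-divisor.
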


\begin{thm}\it
\label{contain exceptional}
Let p be a big and modified nef class. Then the primes $D_1,\ldots,D_q$ contained in the non-K\"ahler locus $E_{nK}(p)$ form an exceptional family $A$, and the fiber of Z over p is the simplicial cone $Z^{-1}(p)=p+V_+(A)$, where $V_+(A):=\sum_{D\in A}\mathbb{R}_+\{D\}$.
\end{thm}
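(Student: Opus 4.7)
The plan is to establish the fiber identity $Z^{-1}(p)=p+V_+(A)$ first and then derive the exceptionality of $A$ as a formal consequence, exploiting the fact that modified nef classes are fixed points of $Z$ (Theorem \ref{modified big}(ii)).

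For the inclusion $p+V_+(A)\subseteq Z^{-1}(p)$, fix $E=\sum_{i=1}^{q}a_iD_i$ with $a_i\geq 0$ and set $\alpha=p+\{E\}$; the goal is to show $N(\alpha)=E$, hence $Z(\alpha)=p$. For the upper bound $N(\alpha)\leq E$, I would use the modified nefness of $p$ to pick, for each $\epsilon>0$, a current $S_\epsilon\in p[-\epsilon\omega]$ with $\nu(S_\epsilon,D)=0$ for every prime $D$; then $S_\epsilon+[E]\in\alpha[-\epsilon\omega]$ is more singular than the minimal-singularity representative $T_{\min,\epsilon}(\alpha)$, so reading off generic Lelong numbers yields $\nu(\alpha,D_j)\leq a_j$ and $\nu(\alpha,D)=0$ for $D\notin A$. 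For the matching lower bound $N(\alpha)\geq E$, I would invoke Boucksom's realization of $E_{nK}(p)$: there is a K\"ahler current $T_*\in p$ with analytic singularities such that $E_+(T_*)=E_{nK}(p)$, whose Siu decomposition reads $T_*=\sum_i\mu_i[D_i]+R$ with $\mu_i>0$ and $R$ having singularities only in codimension $\ge 2$. The current $T_*+[E]\in\alpha$ is then K\"ahler with divisorial Siu part $\sum(\mu_i+a_i)D_i$; combining this realization with the characterization of $Z(\alpha)$ as the largest modified nef class below $\alpha$ forces the excess divisorial content along each $D_j$ to be absorbed by $N(\alpha)$, so that $\nu(\alpha,D_j)\geq a_j$.

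For the reverse inclusion $Z^{-1}(p)\subseteq p+V_+(A)$, let $\alpha$ satisfy $Z(\alpha)=p$, so $\alpha=p+\{N(\alpha)\}$, and let $D$ be a prime in $\operatorname{supp} N(\alpha)$. I would show $D\subseteq E_{nK}(p)$. By Boucksom's structural result, $\operatorname{supp} N(\alpha)$ is contained in the divisorial part of $E_{nK}(\alpha)$. On the other hand, summing any K\"ahler current $T_p\in p$ with $[N(\alpha)]$ produces a K\"ahler current in $\alpha$ whose plus-locus is $E_+(T_p)\cup\operatorname{supp} N(\alpha)$, which after intersecting over $T_p$ gives $E_{nK}(\alpha)\subseteq E_{nK}(p)\cup\operatorname{supp} N(\alpha)$. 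Refining this with the realizing K\"ahler current of $\alpha$ whose Siu divisorial part is exactly $N(\alpha)$, and using that $p=Z(\alpha)=\alpha-\{N(\alpha)\}$ has $N(p)=0$, one peels the contribution of $N(\alpha)$ off $E_{nK}(\alpha)$ and concludes $(E_{nK}(\alpha))_{\mathrm{div}}\subseteq A$, so that $D\in A$ and hence $\alpha\in p+V_+(A)$.

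To extract the exceptional family property, suppose $E=\sum a_iD_i$ with $a_i\geq 0$ and $\{E\}\in\mathcal{MN}$. Since $\mathcal{MN}$ is a convex cone, $p+\{E\}$ is big and modified nef, so Theorem \ref{modified big}(ii) gives $Z(p+\{E\})=p+\{E\}$; combined with the fiber identity just established, this forces $\{E\}=0$ in $H^{1,1}(X,\mathbb{R})$. Pairing with the K\"ahler class $\{\omega\}^{n-1}$ then gives
$$0=\{E\}\cdot\{\omega\}^{n-1}=\sum_{i=1}^q a_i\int_{D_i}\omega^{n-1},$$
and positivity of each integral forces $a_i=0$. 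Hence $V_+(A)\cap\mathcal{MN}=\{0\}$, i.e.\ $A$ is an exceptional family. The substantive content is concentrated in the lower bound of the first inclusion and in the refinement $(E_{nK}(\alpha))_{\mathrm{div}}\subseteq A$ of the second; both ultimately rest on the existence and Siu structure of K\"ahler currents realizing $E_{nK}$ of big modified nef classes---a key input from \cite{Bou04} that must be invoked carefully rather than deduced formally from the preliminaries in Section \ref{preliminaries}.
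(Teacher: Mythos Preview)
The paper does not prove Theorem~\ref{contain exceptional}. It is one of the structural results on the divisorial Zariski decomposition that the paper quotes from Boucksom~\cite{Bou04} without proof (see the sentence introducing Theorems~\ref{modified big}--\ref{orthogonal}: ``We state these properties without proofs''). So there is no in-paper argument to compare your proposal against; any comparison would have to be with Boucksom's original proof.

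That said, your sketch has a substantive gap at the step you yourself flag as delicate: the lower bound $\nu(\alpha,D_j)\geq a_j$ in the first inclusion. Producing a single K\"ahler current $T_*+[E]\in\alpha$ with divisorial Siu part $\sum(\mu_i+a_i)D_i$ gives no lower bound on the \emph{minimal} multiplicity $\nu(\alpha,D_j)$; a specific current can only bound it from above. Your appeal to ``$Z(\alpha)$ is the largest modified nef class below $\alpha$'' does not close the gap either: from $N(\alpha)\leq E$ you get $Z(\alpha)=p+\{E-N(\alpha)\}$, which is modified nef and dominates $p$; maximality of $Z(\alpha)$ among modified nef classes under $\alpha$ is consistent with $Z(\alpha)>p$ and gives no contradiction. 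What is actually needed is the statement that a prime $D_j\subset E_{nK}(p)$ forces $\nu(p+a\{D_j\},D_j)=a$ for every $a\ge 0$, and this is precisely the nontrivial content of Boucksom's argument (ultimately resting on the fact that \emph{every} K\"ahler current in $p$---not just one realizing current---has positive generic Lelong number along $D_j$, together with regularization). In Boucksom's treatment the exceptionality of $A$ and the fiber computation are established together rather than in the order you propose; deriving exceptionality \emph{after} the fiber identity, as you do, is fine formally, but only once the lower bound has been secured by an independent argument.

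Your reverse inclusion is closer to correct, though the phrase ``refining this with the realizing K\"ahler current of $\alpha$ whose Siu divisorial part is exactly $N(\alpha)$'' again presupposes a nontrivial Boucksom result (that the divisorial components of $E_{nK}(\alpha)$ coincide with $\operatorname{supp} N(\alpha)$ for big $\alpha$) rather than proving it. Since the paper itself is content to cite \cite{Bou04} here, the honest move is either to do the same or to reproduce Boucksom's actual argument for the lower bound rather than the heuristic you give.
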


\begin{thm}\it
\label{orthogonal}
Let $X$ be a compact surface. If $\alpha\in H^{1,1}(X,\mathbb{R})$ is a pseudo-effective class, its divisorial Zariski decomposition $\alpha=Z(\alpha)+\{N(\alpha)\}$ is the unique orthogonal decomposition of $\alpha$ with respect to the non-degenerate quadratic form $q(\alpha):=\int\alpha^2$ into the sum of a modified nef class and the class of an exceptional effective $\mathbb{R}$-divisor.
\end{thm}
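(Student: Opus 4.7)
The plan is to prove the theorem in two stages: first verify that the divisorial Zariski decomposition $\alpha=Z(\alpha)+\{N(\alpha)\}$ is itself orthogonal with respect to $q$, and then show that orthogonality pins down the decomposition uniquely.

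\textbf{Orthogonality.} When $\alpha$ is big, $Z(\alpha)$ is big and modified nef by Theorem~\ref{modified big}, hence nef on the surface by Remark~\ref{nef property}. Applying Theorem~\ref{contain exceptional} to $p=Z(\alpha)$, the primes lying in $E_{nK}(Z(\alpha))$ form an exceptional family $A$, and the fiber $Z^{-1}(Z(\alpha))$ equals $Z(\alpha)+V_+(A)$. Since $\alpha=Z(\alpha)+\{N(\alpha)\}$ lies in this fiber, each prime component $D_i$ of $N(\alpha)$ belongs to $A\subseteq E_{nK}(Z(\alpha))$. Theorem~\ref{Tosatti} applied to the big and nef class $Z(\alpha)$ identifies $E_{nK}(Z(\alpha))$ with the null locus $\mathrm{Null}(Z(\alpha))$, so $Z(\alpha)\cdot D_i=0$ for every component $D_i$, and hence $Z(\alpha)\cdot N(\alpha)=0$. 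For pseudo-effective $\alpha$ that is not big, I would approximate by the big classes $\alpha_\epsilon:=\alpha+\epsilon\omega$ with $\omega$ a K\"ahler form, apply the big case to get $Z(\alpha_\epsilon)\cdot N(\alpha_\epsilon)=0$, and pass to the limit, exploiting the monotonicity $\nu(\alpha_\epsilon,D)\le\nu(\alpha,D)$ (which follows from the fact that adding a K\"ahler class contributes nothing to generic minimal multiplicities) together with the continuity of $Z$ on the interior of $\mathcal{E}$ from Theorem~\ref{Continuous}.

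\textbf{Uniqueness.} Suppose $\alpha=P+\{N\}=P'+\{N'\}$ are two orthogonal decompositions with $P,P'$ modified nef (hence nef by Remark~\ref{nef property}), $N,N'$ exceptional effective $\mathbb{R}$-divisors, and $P\cdot N=P'\cdot N'=0$. Set $E:=P-P'=\{N'\}-\{N\}\in H^{1,1}(X,\mathbb{R})$. Computing from the left expression and using $PN=P'N'=0$,
\begin{equation*}
E^2=(P-P')(\{N'\}-\{N\})=P\cdot N'+P'\cdot N\ge 0,
\end{equation*}
since nef classes pair non-negatively with effective divisors. On the other hand, decompose the $\mathbb{R}$-divisor $N'-N=F_1-F_2$ into its positive and negative parts, so that $F_1,F_2$ are effective with disjoint supports. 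The components of $F_1$ all lie in $\mathrm{supp}(N')$ and those of $F_2$ in $\mathrm{supp}(N)$; both are subsets of exceptional families and hence themselves exceptional, so by Theorem~\ref{property exceptional}(iv) their intersection matrices are negative definite. Thus $F_i^2\le 0$ with equality iff $F_i=0$, and $F_1\cdot F_2\ge 0$ since effective divisors with disjoint supports intersect non-negatively. Expanding,
\begin{equation*}
E^2=(F_1-F_2)^2=F_1^2-2F_1\cdot F_2+F_2^2\le F_1^2+F_2^2\le 0.
\end{equation*}
Combining the two inequalities forces $E^2=0$, hence $F_1=F_2=0$, so $N=N'$ and $P=P'$.

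\textbf{Main obstacle.} The only genuinely delicate point is the extension of orthogonality to pseudo-effective classes on the boundary of the big cone, since Theorem~\ref{Continuous} guarantees continuity of $Z$ and $N$ only on the interior of $\mathcal{E}$. The workaround sketched above — approximate by big $\alpha+\epsilon\omega$, use monotonicity of $\nu(\alpha+\epsilon\omega,D)$ in $\epsilon$ to control which primes appear in $N(\alpha_\epsilon)$, and then pass the vanishing intersection number to the limit by continuity of the intersection pairing on $H^{1,1}(X,\mathbb{R})$ — is standard but does require care with the components of $N(\alpha)$ that appear in the limit.
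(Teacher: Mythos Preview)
The paper does not prove this statement at all: Theorem~\ref{orthogonal} is listed among the results quoted from Boucksom \cite{Bou04} with the explicit remark ``We state these properties without proofs.'' So there is no proof in the paper to compare your proposal against.

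That said, your argument is essentially correct. The uniqueness half is clean and self-contained. For orthogonality in the big case, your route through Theorem~\ref{contain exceptional} and Theorem~\ref{Tosatti} works, but note that you are invoking the Collins--Tosatti null locus theorem to deduce a 2004 result of Boucksom; the original proof in \cite{Bou04} is more elementary (on a surface, if $Z(\alpha)\cdot N_i>0$ for some component $N_i$ of $N(\alpha)$, then $Z(\alpha)+tN_i$ remains nef for small $t>0$, contradicting the minimality built into the definition of $\nu(\alpha,N_i)$). Your boundary case sketch is also correct once fleshed out: the bijection $T\mapsto T+\epsilon\omega$ between $\alpha[-(\delta+\epsilon)\omega]$ and $(\alpha+\epsilon\omega)[-\delta\omega]$ preserves singularities, giving $\nu(\alpha+\epsilon\omega,D)=\sup_{\delta'>\epsilon}\nu(T_{\min,\delta'},D)\nearrow\nu(\alpha,D)$ as $\epsilon\to 0$, so $N(\alpha_\epsilon)\to N(\alpha)$ and hence $Z(\alpha_\epsilon)\to Z(\alpha)$; then $Z(\alpha)\cdot N(\alpha)=0$ follows by continuity of the intersection form, and nefness of $Z(\alpha)$ upgrades this to $Z(\alpha)\cdot N_i=0$ for each component.
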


\begin{rem}
Let $X$ be a surface, $\alpha$ is the class of an effective $\mathbb{Q}$-divisor $D$ on a projective surface, the divisorial Zariski decomposition of $\alpha$ is just the original Zariski decomposition of $D$.
\end{rem}

\section{Transcendental Morse inequality}

\subsection{Proof of the transcendental Morse inequality for complex surfaces}

The main goal of this section is to prove the differentiability of the volume function and the transcendental Morse inequality for complex surfaces. In fact, in the next subsection we will give a more general method to prove the transcendental Morse inequality for K\"ahler manifolds on which modified nef cones $\mathcal{MN}$ coincide with the nef cones; this includes complex surfaces. However, since the methods and results here are very special in studying generalized Okounkov bodies, we will treat complex surface and higher dimensional K\"ahler manifolds separately. Throughout this subsection, if not specially mentioned, $X$ will stand for a complex K\"ahler surface. We denote by $q(\alpha):=\int \alpha^2$ the quadratic form on $H^{1,1}(X,\mathbb{R})$. By the Hodge index theorem, $(H^{1,1}(X,\mathbb{R}),q)$ has signature $(1,h^{1,1}(X)-1)$. The open cone $\{\alpha\in H^{1,1}(X,\mathbb{R})|q(\alpha)>0\}$ has thus two connected components which are convex cones, and we denote by $\mathcal{P}$ the component containing the K\"ahler cone $\mathcal{K}$.
\begin{lem}\it
\label{component}
Let $X$ be a compact K\"ahler manifold of dimension $n$. If $\alpha\in H^{1,1}(X,\mathbb{R})$ is a big class, $\beta \in H^{1,1}(X,\mathbb{R})$ is a nef class, then $N(\alpha+t\beta)\leq N(\alpha)$ as effective $\mathbb{R}$-divisors for $t\geq 0$. Furthermore, when t is small enough, the prime components of $N(\alpha+t\beta)$ will be the same as those of $N(\alpha)$.
\end{lem}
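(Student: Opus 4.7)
The plan is to prove the two assertions separately: first the inequality $N(\alpha+t\beta)\le N(\alpha)$ by a pointwise Lelong-number comparison, then the equality of prime components for small $t$ via continuity of the divisorial Zariski decomposition combined with the linear independence of exceptional classes.

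For the first assertion I would start from the characterization $\nu(\alpha,x)=\sup_{\epsilon>0}\nu(T_{\min,\epsilon}(\alpha),x)$, where $T_{\min,\epsilon}(\alpha)$ has minimal singularities in $\alpha[-\epsilon\omega]$. Since $\beta$ is nef, for each $\epsilon>0$ one can choose a smooth representative $\beta_\epsilon\in\beta$ with $\beta_\epsilon\ge -\epsilon\omega$. The sum $T_{\min,\epsilon}(\alpha)+t\beta_\epsilon$ then lies in $(\alpha+t\beta)[-(1+t)\epsilon\omega]$, so by minimality of $T_{\min,(1+t)\epsilon}(\alpha+t\beta)$ in that set,
$$\nu\bigl(T_{\min,(1+t)\epsilon}(\alpha+t\beta),x\bigr)\le\nu\bigl(T_{\min,\epsilon}(\alpha)+t\beta_\epsilon,x\bigr)=\nu\bigl(T_{\min,\epsilon}(\alpha),x\bigr),$$
the last equality because $\beta_\epsilon$ is smooth and contributes no Lelong number. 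Taking the supremum over $\epsilon>0$ on both sides (noting that as $\epsilon$ ranges over $(0,\infty)$ so does $(1+t)\epsilon$) yields $\nu(\alpha+t\beta,x)\le\nu(\alpha,x)$ for every $x\in X$; taking the infimum over $x\in D$ gives $\nu(\alpha+t\beta,D)\le\nu(\alpha,D)$ for each prime $D$, which is precisely $N(\alpha+t\beta)\le N(\alpha)$ as effective $\mathbb{R}$-divisors.

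For the second assertion, observe that since $\alpha$ is big, $Z(\alpha)$ is big and modified nef by Theorem \ref{modified big}, so by Theorem \ref{contain exceptional} the finitely many primes $D_1,\ldots,D_r$ appearing in $N(\alpha)$ form an exceptional family, and by Theorem \ref{property exceptional}(iii) their classes $\{D_1\},\ldots,\{D_r\}$ are linearly independent in $H^{1,1}(X,\mathbb{R})$. By the inequality just proved, for every $t\ge 0$ the support of $N(\alpha+t\beta)$ is contained in $\{D_1,\ldots,D_r\}$, so one can write $N(\alpha+t\beta)=\sum_{i=1}^{r}\nu_i(t)D_i$ with $0\le\nu_i(t)\le\nu(\alpha,D_i)$. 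For $t$ small, $\alpha+t\beta$ remains in the interior of the pseudo-effective cone, so the continuity of $Z$ from Theorem \ref{Continuous} gives
$$\sum_{i=1}^{r}\nu_i(t)\{D_i\}=(\alpha+t\beta)-Z(\alpha+t\beta)\ \longrightarrow\ \alpha-Z(\alpha)=\sum_{i=1}^{r}\nu(\alpha,D_i)\{D_i\}$$
in $H^{1,1}(X,\mathbb{R})$ as $t\to 0^+$. Linear independence of the $\{D_i\}$ then forces $\nu_i(t)\to\nu(\alpha,D_i)>0$ for each $i$, so for all sufficiently small $t\ge 0$ each $\nu_i(t)>0$, and the supports of $N(\alpha+t\beta)$ and $N(\alpha)$ coincide.

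The only real obstacle is the passage in the second step from the cohomological continuity of the divisorial Zariski decomposition to the coefficient-wise convergence $\nu_i(t)\to\nu(\alpha,D_i)$. This is precisely handled by the structural fact that the support of the negative part of a big class is an exceptional family with linearly independent cohomology classes, which upgrades convergence of the class $\{N(\alpha+t\beta)\}$ to convergence of each individual coefficient; without the bigness of $\alpha$ (which forces finiteness and linear independence), this final step would fail.
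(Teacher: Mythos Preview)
Your proof is correct and follows essentially the same strategy as the paper. For the first assertion the paper simply invokes the convexity (subadditivity) and homogeneity of $N$ from Theorem~\ref{Continuous} together with $N(\beta)=0$ for nef $\beta$, whereas you unwind this by a direct Lelong-number comparison using minimal-singularity currents---this is precisely how one proves that convexity statement in the first place. For the second assertion both you and the paper appeal to the continuity of $N$ (equivalently of $Z$) on the interior of $\mathcal{E}$; your additional step of invoking the linear independence of the exceptional classes $\{D_i\}$ to pass from cohomological convergence of $\{N(\alpha+t\beta)\}$ to coefficient-wise convergence makes explicit what the paper leaves tacit.
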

\begin{proof}
Since $\beta$ is nef, by Theorem \ref{Continuous}, we have
$$
N(\alpha+t\beta)\leq N(\alpha)+tN(\beta)=N(\alpha).
$$
Since the map $\alpha \mapsto N(\alpha)$ is convex on pseudo-effective class cone $ \mathcal{E}$, it is continuous on the interior of $ \mathcal{E}$, and thus the theorem follows.
\end{proof}

\begin{thm}\it
\label{differential}
If $\alpha\in H^{1,1}(X,\mathbb{R})$ is a big class and $\beta \in H^{1,1}(X,\mathbb{R})$ is a nef class, then
\begin{equation}
\label{differential formula}
\left.\frac{d}{dt}\right|_{t=0}\vol_X(\alpha+t\beta)=2Z(\alpha)\cdot \beta
\end{equation}
\end{thm}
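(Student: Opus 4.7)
The plan is to exploit two features special to surfaces. First, by Remark \ref{nef property} the modified nef cone and the nef cone coincide, so Theorem \ref{modified big} implies that $Z(\alpha+t\beta)$ is big and nef whenever $\alpha+t\beta$ is big; combined with Remark \ref{bijection}, this gives
$$\vol_X(\alpha+t\beta) \;=\; Z(\alpha+t\beta)^2.$$
Second, the divisorial Zariski decomposition is orthogonal (Theorem \ref{orthogonal}), so $Z(\alpha+t\beta)\cdot N(\alpha+t\beta)=0$ and the self-intersection above reduces to $Z(\alpha+t\beta)\cdot(\alpha+t\beta)$.

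The key additional input is the individual vanishing $Z(\alpha+t\beta)\cdot D_i=0$ for each prime component $D_i$ of $N(\alpha+t\beta)$. Writing $N(\alpha+t\beta)=\sum a_i(t)D_i$ with $a_i(t)>0$, the overall orthogonality $\sum a_i(t)\,Z(\alpha+t\beta)\cdot D_i=0$ forces each term to vanish because $Z(\alpha+t\beta)$ is nef and each $D_i$ is a prime curve, making every summand nonnegative. By Lemma \ref{component}, for small $t\geq 0$ the prime components of $N(\alpha+t\beta)$ are exactly those of $N(\alpha)$; hence $Z(\alpha+t\beta)\cdot N(\alpha)=0$, and symmetrically $Z(\alpha)\cdot N(\alpha+t\beta)=0$.

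Using these two vanishings I would compute
$$\vol_X(\alpha+t\beta) \;=\; Z(\alpha+t\beta)\cdot(\alpha+t\beta) \;=\; Z(\alpha+t\beta)\cdot Z(\alpha) \,+\, t\,Z(\alpha+t\beta)\cdot\beta,$$
and separately
$$Z(\alpha+t\beta)\cdot Z(\alpha) \;=\; Z(\alpha)\cdot(\alpha+t\beta) \;=\; Z(\alpha)^2 + t\,Z(\alpha)\cdot\beta.$$
Substituting and subtracting $\vol_X(\alpha)=Z(\alpha)^2$ yields the difference quotient $Z(\alpha)\cdot\beta + Z(\alpha+t\beta)\cdot\beta$, which by the continuity of $Z$ on the interior of the pseudo-effective cone (Theorem \ref{Continuous}) tends to $2Z(\alpha)\cdot\beta$ as $t\to 0^+$.

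The main obstacle is promoting this right-derivative to a genuine two-sided derivative, since Lemma \ref{component} is stated only for $t\geq 0$. I would address this by noting that $\alpha$ lies in the interior of the big cone, so for small $s>0$ the class $\alpha':=\alpha-s\beta$ is still big, and the argument above applied to the family $\alpha'+r\beta$ yields the right-derivative $2Z(\alpha'+r\beta)\cdot\beta$ at every $r\in[0,2s)$. Continuity of $Z$ makes this right-derivative continuous in $r$, and the classical fact that a continuous function with continuous right-derivative is $\mathcal{C}^1$ upgrades right-differentiability to full differentiability; specializing to $r=s$ (which corresponds to $t=0$ for the original family) recovers the two-sided derivative and completes the proof.
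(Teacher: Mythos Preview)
Your proof is correct and rests on the same key ingredients as the paper's: stability of the prime components of $N(\alpha+t\beta)$ for small $t\ge 0$ (Lemma~\ref{component}), the individual vanishings $Z(\alpha+t\beta)\cdot N_i=0$ forced by nefness of $Z$ together with orthogonality, and the identity $\vol_X=Z^2$. The only difference is in the final step. The paper splits $H^{1,1}(X,\mathbb{R})=V^{\bot}\oplus V$ with $V$ spanned by the $N_i$ and reads off directly that $Z(\alpha+t\beta)=Z(\alpha)+t\beta^{\bot}$ is \emph{linear} in $t$ on $[0,\epsilon)$, so $\vol_X(\alpha+t\beta)$ is a quadratic polynomial and the derivative is immediate. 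You instead manipulate the intersection products bilinearly and invoke continuity of $Z$ at the end. The paper's explicit linear parametrization is a slightly stronger intermediate statement, and it is reused later (formulas (\ref{decompose})--(\ref{decompse 2}) in the proof of Theorem~\ref{Okounkov}); your route is a touch softer but perfectly sufficient for the derivative formula itself. Your handling of the two-sided derivative via shifting the basepoint to $\alpha-s\beta$ is in fact more careful than the paper, which only exhibits the formula for $t\ge 0$.
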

\begin{proof}
By Lemma \ref{component}, there exists an $\epsilon>0$ such that when $0\leq t<\epsilon$, we can write $N(\alpha+t\beta)=\sum_{i=1}^{r}a_i(t)N_i$, where $0< a_i(t)\leq a_i(0)=:a_i$, and each $a_i(t)$ is a continuous and decreasing function with respect to $t$. According to the orthogonal property of divisorial Zariski decomposition (ref. Theorem \ref{orthogonal}),    $Z(\alpha+t\beta)\cdot N(\alpha+t\beta)=0$ for $t\geq 0$. Since $Z(\alpha+t\beta)$ is modified nef and thus nef (by Remark \ref{nef property}), we have $Z(\alpha+t\beta)\cdot N_i\geq 0$ for every $i$. When $0\leq t<\epsilon$, we have $a_i(t)>0$ for $i=1,\ldots,r$, therefore, $Z(\alpha+t\beta)$ is orthogonal to each $\{N_i\}$ with respect to $q$. We denote by $V\subset H^{1,1}(X,\mathbb{R})$ the finite vector space spanned by $\{N_1\},\ldots,\{N_r\}$, by $V^{\bot}$ the orthogonal space of V with respect to $q$. Thus $\alpha+t\beta=Z(\alpha+t\beta)+\sum_{i=1}^{r}a_i(t)\{N_i\}$ is the decomposition in the direct sum $V^{\bot} \oplus V$. We decompose  $\beta=\beta^{\bot}+\beta_0$ in the direct sum $V^{\bot}\oplus V$, and we have
\begin{gather}
Z(\alpha+t\beta)=Z(\alpha)+t\beta^{\bot},\nonumber\\
\sum_{i=1}^{r}a_i(t)\{N_i\}=\sum_{i=1}^{r}a_i\{N_i\}+t\beta_0.\nonumber
\end{gather}
Since $\vol_X(\alpha+t\beta)=\vol_X(Z(\alpha+t\beta))= Z(\alpha+t\beta)^2$ (by Remark \ref{bijection}),
it is easy to deduce that
$$
\left.\frac{d}{dt}\right|_{t=0}\vol_X(\alpha+t\beta)=2Z(\alpha)\cdot \beta^{\bot}=2Z(\alpha)\cdot \beta.
$$
The last equality follows from $\beta_0\in V$ and $Z(\alpha)\in V^{\bot}$. We get the first half of Theorem \ref{main differential}.
\end{proof}

To prove the transcendental Morse inequality for complex surfaces, we will need a criterion for bigness of a class:
\begin{thm}\it
\label{criterion}
Let $\alpha$ and $\beta$ be two nef classes such that $\alpha^2-2\alpha\cdot \beta>0$, then $\alpha-\beta$ is a big class.
\end{thm}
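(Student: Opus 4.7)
My approach is to argue by contradiction, combining the differentiability formula of Theorem \ref{differential} with the continuity of the volume function on the pseudo-effective cone. First I would observe that $\alpha$ itself is big: being nef with $\alpha^2 > 2\alpha\cdot\beta \geq 0$, it has strictly positive self-intersection, and on a compact K\"ahler surface a nef class with positive self-intersection is big by Demailly--Paun. Hence $\alpha - t\beta$ remains big for all sufficiently small $t > 0$, and I define $t_0 := \sup\{t \in [0,1] \mid \alpha - t\beta \text{ is big}\}$. Assume for contradiction that $\alpha - \beta$ is not big; then $t_0 \leq 1$, the class $\alpha - t_0\beta$ lies on the boundary of the big cone, and hence $\vol_X(\alpha - t_0\beta) = 0$.

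For $t \in (0, t_0)$ the class $\alpha - t\beta$ is big, so applying Theorem \ref{differential} at the base class $\alpha - t\beta$ in the nef direction $\beta$ yields
$$
\phi'(t) := \frac{d}{dt}\vol_X(\alpha - t\beta) = -2\, Z(\alpha - t\beta)\cdot\beta.
$$
The crucial step is bounding the right-hand side from below. Pairing the divisorial Zariski decomposition $\alpha - t\beta = Z(\alpha - t\beta) + \{N(\alpha - t\beta)\}$ with $\beta$, and using that $N(\alpha - t\beta)$ is effective while $\beta$ is nef (so $N(\alpha - t\beta)\cdot\beta \geq 0$) together with $\beta^2 \geq 0$, I obtain
$$
Z(\alpha - t\beta)\cdot\beta \leq (\alpha - t\beta)\cdot\beta \leq \alpha\cdot\beta,
$$
which gives $\phi'(t) \geq -2\alpha\cdot\beta$ throughout $(0, t_0)$.

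Integrating this bound from $0$ to $t$ yields $\vol_X(\alpha - t\beta) \geq \alpha^2 - 2t\,\alpha\cdot\beta$ for every $t \in [0, t_0)$. Letting $t \to t_0^-$ and invoking the continuity of $\vol_X$ on the pseudo-effective cone, we deduce
$$
\vol_X(\alpha - t_0\beta) \geq \alpha^2 - 2t_0\,\alpha\cdot\beta \geq \alpha^2 - 2\alpha\cdot\beta > 0,
$$
contradicting $\vol_X(\alpha - t_0\beta) = 0$. Therefore $\alpha - \beta$ must be big. The main subtlety I expect is justifying the fundamental theorem of calculus step on $(0, t_0)$: Theorem \ref{differential} as stated gives the derivative only at $t=0$, but translating the base class shows $\phi$ is differentiable throughout $(0, t_0)$ with derivative $-2Z(\alpha - t\beta)\cdot\beta$, which depends continuously on $t$ by the continuity of $Z$ on the interior of $\mathcal{E}$ (Theorem \ref{Continuous}). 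Thus $\phi$ is $C^1$ on $(0, t_0)$ and the integral bound is legitimate.
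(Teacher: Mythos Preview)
Your argument is correct, but it follows a genuinely different route from the paper. The paper's proof of Theorem~\ref{criterion} is purely linear-algebraic and topological: it observes that for $t\in[0,1]$ one has $(\alpha-t\beta)^2=\alpha^2-2t\,\alpha\cdot\beta+t^2\beta^2\geq\alpha^2-2\alpha\cdot\beta>0$, so the whole segment lies in the open cone $\{q>0\}$; since $\alpha$ sits in the connected component $\mathcal{P}$ containing the K\"ahler cone, and $\mathcal{P}\subset\mathcal{E}^\circ$ by Lamari's version of Nakai--Moishezon, connectedness forces $\alpha-\beta\in\mathcal{P}$. No differentiability formula, no Zariski decomposition, no volume continuity is needed.

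Your approach instead feeds Theorem~\ref{differential} back into the bigness question, and in effect proves Theorem~\ref{criterion} and Theorem~\ref{morse 2} in a single stroke: the bound $\phi'(t)\geq-2\alpha\cdot\beta$ you derive is exactly the estimate the paper uses \emph{after} establishing bigness. The trade-offs are clear. The paper's proof is shorter and self-contained, relying only on the Hodge index signature; yours requires the continuity of $Z$ on the interior of $\mathcal{E}$ (Theorem~\ref{Continuous}), the continuity of $\vol_X$ up to the boundary of the big cone (Boucksom), and a small real-analysis observation that a continuous function with continuous one-sided derivative is $C^1$. On the other hand, your argument packages the bigness criterion and the Morse inequality together, and its structure is the one that survives in higher dimension (compare the proof of Theorem~\ref{morse high}), whereas the paper's connectedness argument is specific to the signature $(1,h^{1,1}-1)$ of surfaces. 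One minor point worth making explicit in your write-up: Theorem~\ref{differential} as proved only gives the derivative of $s\mapsto\vol_X(\gamma+s\beta)$ for $s\geq 0$, so a priori you obtain the \emph{left} derivative of $\phi$ on $(0,t_0)$; your final paragraph correctly identifies this and resolves it via the continuity of $t\mapsto Z(\alpha-t\beta)\cdot\beta$.
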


\begin{proof}
We denote by $\mathcal{P}$ the connected component of the open cone $\{\alpha\in H^{1,1}(X,\mathbb{R})\mid q(\alpha)>0\}$ containing the K\"ahler cone $\mathcal{K}$, then $\mathcal{P}\subset \mathcal{E}^0$. As a consequence of the Nakai-Moishezon criterion for surfaces (ref. \cite{Lam99}), we know that, if $\gamma$ is a real (1,1)-class with $\gamma^2>0$, then $\gamma$ or $-\gamma$ is big. Since $\alpha$ and $\beta$ are both nef, we have that $(\alpha-t\beta)^2>0$ for $0\leq t\leq 1$. This means that $\alpha-t\beta$ is contained in some component of the open cone $\{\alpha\in H^{1,1}(X,\mathbb{R})|q(\alpha)>0\}$. But since $\alpha$ is big, $\alpha-t\beta$ is contained in $\mathcal{P}\subset \mathcal{B}$, and \textit{a fortiori} $\alpha-\beta$ is.
\end{proof}

Now we are ready to prove the transcendental Morse inequality for complex surfaces.
\begin{proof}[Proof of Theorem \ref{morse 2}]
By Theorem \ref{criterion}, when ${\alpha}^2- 2\alpha \cdot \beta>0$, the cohomology class $\alpha-\beta$ is big. By the differentiability formula  (\ref{differential formula}), we have
$$
\vol_X(\alpha-\beta)=\alpha^2-2\int_{0}^{1} Z(\alpha-t\beta)\cdot \beta\ dt.
$$
Since the Zariski projection $Z:\mathcal{E}\rightarrow \mathcal{MN} $ is concave and homogeneous by Theorem \ref{Continuous}, we have $$\alpha=Z(\alpha)\geq Z(\alpha-t\beta)+Z(t\beta)\geq Z(\alpha-t\beta).$$
Since $\beta$ is nef, we have
$$
\alpha\cdot \beta\geq Z(\alpha-t\beta)\cdot \beta,
$$
and thus
$$
\vol_X(\alpha-\beta) \geq {\alpha}^2- 2\alpha \cdot \beta.
$$
\end{proof}

In the last part of this subsection, we prove the second half of Theorem \ref{main differential}.
\begin{thm}\it
\label{differential 2}
Let $\alpha\in H^{1,1}(X,\mathbb{R})$ be a big class and $C$ be an irreducible divisor, then
\begin{equation}
\label{differential formula 2}
\left.\frac{d}{dt}\right|_{t=0}\vol_X(\alpha+tC)=2Z(\alpha)\cdot C.
\end{equation}
\end{thm}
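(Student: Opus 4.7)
The plan is to adapt the argument of Theorem \ref{differential} to the case where $\beta=\{C\}$ with $C$ an irreducible curve (possibly not nef), splitting into two cases based on whether $Z(\alpha)\cdot C$ vanishes or is strictly positive. Note that $Z(\alpha)\cdot C\geq 0$ since $Z(\alpha)$ is nef (by Remark \ref{nef property}) and $C$ is an effective curve.

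First suppose $Z(\alpha)\cdot C=0$. Then by Theorem \ref{Tosatti}, $C\subset\text{Null}(Z(\alpha))=E_{nK}(Z(\alpha))$, so $C$ is one of the primes in the exceptional family $A$ of primes in $E_{nK}(Z(\alpha))$ (Theorem \ref{contain exceptional}). Combined with the standard inclusion $\text{supp}(N(\alpha))\subseteq A$, this shows that $N(\alpha)+tC$ is an effective exceptional $\mathbb{R}$-divisor for every $t\geq 0$, while $Z(\alpha)\cdot(N(\alpha)+tC)=0$ by hypothesis. The uniqueness in Theorem \ref{orthogonal} then forces $Z(\alpha+tC)=Z(\alpha)$, so $\vol_X(\alpha+tC)=Z(\alpha)^2$ is locally constant and the derivative is $0=2Z(\alpha)\cdot C$.

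Now suppose $Z(\alpha)\cdot C>0$. The key step is to pin down the support $\mathcal{S}(t):=\text{supp}(N(\alpha+tC))$ for $t>0$ small. Subadditivity of $N$ (from convexity and homogeneity, Theorem \ref{Continuous}) yields $N(\alpha+tC)\leq N(\alpha)+tN(\{C\})\leq N(\alpha)+tC$, since $N(\{C\})=0$ when $C^2\geq 0$ and $N(\{C\})=C$ when $C^2<0$ (the latter by Theorem \ref{property exceptional}(iv), as $\{C\}$ is then exceptional). Thus $\mathcal{S}(t)\subseteq\text{supp}(N(\alpha))\cup\{C\}$. Having $C\in\mathcal{S}(t)$ would force $Z(\alpha+tC)\cdot C=0$ by orthogonality, which is incompatible with $Z(\alpha+tC)\cdot C\to Z(\alpha)\cdot C>0$ (continuity of $Z$, Theorem \ref{Continuous}); thus $C\notin\mathcal{S}(t)$ for $t$ small. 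Continuity of the coefficients of $N$ gives the reverse inclusion $\text{supp}(N(\alpha))\subseteq\mathcal{S}(t)$, so $\mathcal{S}(t)=\text{supp}(N(\alpha))$ for $t$ in a right-neighborhood of $0$.

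With the support stabilized, set $V:=\operatorname{span}(\text{supp}(N(\alpha)))\subseteq H^{1,1}(X,\mathbb{R})$. The orthogonality of the Zariski decomposition gives $Z(\alpha),Z(\alpha+tC)\in V^\perp$ and $N(\alpha),N(\alpha+tC)\in V$. Decomposing $C=C^\perp+C_0$ along the direct sum $V^\perp\oplus V$, uniqueness of the decomposition of $\alpha+tC$ in $V^\perp\oplus V$ forces $Z(\alpha+tC)=Z(\alpha)+tC^\perp$, so
\[
\vol_X(\alpha+tC)=(Z(\alpha)+tC^\perp)^2=Z(\alpha)^2+2tZ(\alpha)\cdot C^\perp+t^2(C^\perp)^2.
\]
Since $Z(\alpha)\cdot C_0=0$ (as $C_0\in V$ and $Z(\alpha)\in V^\perp$), the derivative at $t=0$ equals $2Z(\alpha)\cdot C^\perp=2Z(\alpha)\cdot C$. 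The main technical step is the subadditivity-based identification of $\mathcal{S}(t)$; once it is settled, the algebraic computation parallels that of Theorem \ref{differential}.
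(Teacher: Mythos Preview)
Your proof is correct, and the first case ($Z(\alpha)\cdot C=0$) matches the paper's argument essentially verbatim. In the second case, however, your route differs from the paper's.

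The paper proceeds constructively: writing $N(\alpha)=\sum a_iN_i$, it sets $(b_1,\dots,b_r)^{\!\top}=-S^{-1}(C\cdot N_1,\dots,C\cdot N_r)^{\!\top}$ with $S$ the (negative definite) intersection matrix of the $N_i$, invokes a linear-algebra lemma (all entries of $S^{-1}$ are $\le 0$) to get $b_i\ge 0$, and then checks directly that $Z(\alpha)+t\bigl(\{C\}+\sum b_i\{N_i\}\bigr)$ is big and nef while $\sum(a_i-tb_i)N_i$ is effective and orthogonal to it. Uniqueness in Theorem~\ref{orthogonal} then pins down the Zariski decomposition explicitly for small $t$, and differentiation follows.

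Your argument is softer: you never write down $b_i$, and you bypass the sign lemma on $S^{-1}$. Instead you use subadditivity of $N$ to trap $\mathrm{supp}\,N(\alpha+tC)$ inside $\mathrm{supp}\,N(\alpha)\cup\{C\}$, continuity of $Z$ to rule out $C$, and continuity of $N$ for the reverse inclusion; once the support is stabilised you reuse the $V^\perp\oplus V$ splitting exactly as in Theorem~\ref{differential}. This is cleaner and more uniform with the nef case (indeed you need no separate reduction to ``$C$ not nef''), but it yields less: the paper's explicit formulas $N(\alpha+tC)=\sum(a_i-tb_i)N_i$ and $Z(\alpha+tC)=Z(\alpha)+t\{C\}+t\sum b_i\{N_i\}$ are used again later in the description of the Okounkov body (proof of Theorem~\ref{Okounkov}, case (ii)), so the constructive approach is not wasted effort.
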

\begin{proof}
It suffices to prove the theorem for $C$ not nef. Thus we have $C^2<0$. Write $N(\alpha)=\sum_{i=1}^{r}a_iN_i$, where each $N_i$ is prime divisor. If $C\subseteq E_{nK}(Z(\alpha))$, we deduce that $Z(\alpha)\cdot C=0$ by Theorem \ref{Tosatti}, and $\{C,N_1,\ldots,N_r\}$ forms an exceptional family by Theorem \ref{contain exceptional}. Thus we have
$$
Z(\alpha+tC)=Z(\alpha),
$$
and
$$
N(\alpha+tC)=N(\alpha)+tC
$$
for $t\geq0$. The theorem is thus proved in this case.

From now on we assume $C\not\subseteq E_{nK}(Z(\alpha))$, thus we have $Z(\alpha)\cdot C> 0$ and $C\not\subseteq {\rm Supp}(N(\alpha))$. We define
\begin{equation}
\left(
  \begin{array}{c}
    b_1\\
    \vdots\\
    b_r\\
  \end{array}
\right)=
-S^{-1}\cdot\left(
  \begin{array}{c}
    C\cdot N_1\\
    \vdots\\
    C\cdot N_r\\
  \end{array}
\right)\nonumber,
\end{equation}
where $S=(s_{ij})$ denotes the intersection matrix of $\{N_1,\ldots, N_r\}$. By Theorem \ref{orthogonal} we know that $S$ is negative definite satisfying $s_{ij}\geq 0$ for all $i\neq j$. We claim that $Z(\alpha)+t(\{C\}+\sum_{i=1}^{r}b_i\{N_i\})$ is big and nef if $0\leq t< -\frac{Z(\alpha)\cdot C}{C^2}$. We need the following lemma from \cite{BKS03} to prove our claim.

\begin{lem}
\label{BKS}
Let $A$ be a negative definite $r\times r$-matrix over the reals such that $a_{ij}\geq 0$ for all $i\neq j$. Then all entries of the inverse matrix $A^{-1}$ are $\leq0$.
\end{lem} 
By Lemma \ref{BKS} we know that all entries of $S^{-1}$ are $\leq0$, thus $b_j\geq0$ for all $1\leq j\leq r$ and we get the bigness of $Z(\alpha)+t(\{C\}+\sum_{i=1}^{r}b_i\{N_i\})$. By the construction of $b_j$, we have
$$
(Z(\alpha)+t(\{C\}+\sum_{i=1}^{r}b_i\{N_i\}))\cdot N_j=0
$$
for $1\leq j\leq r$, and 
$$
(Z(\alpha)+t(\{C\}+\sum_{i=1}^{r}b_i\{N_i\}))\cdot C>0
$$
for $0\leq t<-\frac{Z(\alpha)\cdot C}{C^2}$. Thus we have the nefness and our claim follows. Since the divisorial Zariski decomposition is orthogonal and unique (see Theorem \ref{orthogonal}), we conclude that 
\begin{gather}
\label{negative}
N(\alpha+t\{C\})=\sum_{i=1}^{r}(a_i-tb_i)N_i,\\
Z(\alpha+t\{C\})=Z(\alpha)+t\{C\}+\sum_{i=1}^{r}tb_i\{N_i\},
\end{gather}
for $t$ small enough. Since $\vol_X(\alpha+tC)=Z(\alpha+t\{C\})^2$, we have thus also obtained formula (\ref{differential formula 2}) in this case.

\end{proof}

\subsection{Transcendental Morse inequality for some special K\"ahler manifolds}
One can modify the proof of Theorem \ref{morse 2} a little bit, to extend the transcendental Morse inequality to K\"ahler manifolds whose modified nef cone $\mathcal{MN}$ coincides with the nef cone $\mathcal{N}$. In this subsection, we assume $X$ to be a compact K\"ahler manifold of dimension $n$ which satisfies this condition.

\begin{lem}\it
\label{orthogonal higher}
If $\alpha\in \mathcal{E}^\circ$, then the divisorial Zariski decomposition $\alpha =Z(\alpha)+N(\alpha)$ is such that
$$Z(\alpha)^{n-1}\cdot N(\alpha)=0.$$
\end{lem}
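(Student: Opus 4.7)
The plan is to reduce the orthogonality statement to the Collins--Tosatti characterization of the non-K\"ahler locus as the null locus. The idea is that under the hypothesis $\mathcal{MN}=\mathcal{N}$, the Zariski projection $Z(\alpha)$ becomes both big and nef, so every prime component of the negative part $N(\alpha)$ is a divisor inside the null locus of $Z(\alpha)$; by Theorem~\ref{Tosatti} it must actually be a subvariety with vanishing top self-intersection against $Z(\alpha)$, which yields the claim after summing.

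First I would extract the basic properties of the Zariski projection. Since $\alpha \in \mathcal{E}^{\circ}$ is big, Theorem~\ref{modified big} tells us that $Z(\alpha)$ is big and modified nef; invoking the hypothesis $\mathcal{MN}=\mathcal{N}$, we upgrade this to $Z(\alpha)$ being big and nef, so that Theorem~\ref{Tosatti} applies and gives $E_{nK}(Z(\alpha))=\mathrm{Null}(Z(\alpha))$. Next, write $N(\alpha)=\sum_i a_i N_i$ with $a_i>0$ and $N_i$ prime. Applying Theorem~\ref{contain exceptional} to the big and modified nef class $p:=Z(\alpha)$, the fiber $Z^{-1}(Z(\alpha))$ is the simplicial cone $Z(\alpha)+V_+(A)$, where $A$ is the exceptional family of primes contained in $E_{nK}(Z(\alpha))$. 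Since $\alpha\in Z^{-1}(Z(\alpha))$, we have $N(\alpha)\in V_+(A)$, so every $N_i$ is one of the primes in $A$; in particular $N_i\subseteq E_{nK}(Z(\alpha))=\mathrm{Null}(Z(\alpha))$.

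It then remains to upgrade ``$N_i$ lies in the null locus'' to ``$Z(\alpha)^{n-1}\cdot N_i=0$''. By the definition of $\mathrm{Null}$, every point of the irreducible divisor $N_i$ belongs to some positive-dimensional irreducible analytic subvariety $V$ with $\int_V Z(\alpha)^{\dim V}=0$. A countable union of proper analytic subsets of $N_i$ cannot cover $N_i$, so there must exist such a $V$ containing $N_i$ entirely, forcing $\dim V\geq n-1$. The case $V=X$ is excluded by the bigness of $Z(\alpha)$, which forces $\int_X Z(\alpha)^n=\vol_X(Z(\alpha))>0$. Hence $V=N_i$ and $Z(\alpha)^{n-1}\cdot N_i=0$. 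Summing over $i$ gives $Z(\alpha)^{n-1}\cdot N(\alpha)=0$.

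The main obstacle is the passage in the last paragraph from ``$N_i$ is set-theoretically inside the null locus'' to ``$N_i$ itself appears as one of the subvarieties in the definition of $\mathrm{Null}$''. Formally it is a small combinatorial/topological argument about irreducible coverings, but it is the only place where the Collins--Tosatti equality is used with any subtlety; every other ingredient is a direct quotation of Boucksom's theorems on $Z$ and the exceptional families. Once this step is in place, the conclusion is immediate from linearity.
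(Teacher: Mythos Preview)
Your argument follows the paper's route exactly: use $\mathcal{MN}=\mathcal{N}$ to make $Z(\alpha)$ big and nef, invoke Theorem~\ref{contain exceptional} to place each prime $N_i$ of $N(\alpha)$ inside $E_{nK}(Z(\alpha))$, then apply Theorem~\ref{Tosatti} to identify this with $\mathrm{Null}(Z(\alpha))$ and conclude $Z(\alpha)^{n-1}\cdot N_i=0$. The paper simply asserts this last implication in one line; you try to justify it, and that is the only place where your write-up differs.

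Your justification has a small gap, though. The union defining $\mathrm{Null}(Z(\alpha))$ ranges over \emph{all} irreducible subvarieties $V$ with $\int_V Z(\alpha)^{\dim V}=0$, and this family need not be countable (think of a one-parameter family of curves on a surface component), so the sentence ``a countable union of proper analytic subsets of $N_i$ cannot cover $N_i$'' is not yet applicable. The clean repair is to first observe that $E_{nK}(Z(\alpha))$ is a proper analytic subset of $X$, hence of dimension at most $n-1$; since $N_i$ is irreducible of dimension $n-1$, it is an irreducible component of $\mathrm{Null}(Z(\alpha))$. If one had $\int_{N_i} Z(\alpha)^{n-1}>0$, then on a resolution $\pi:\widetilde{N}_i\to N_i$ the class $\pi^*Z(\alpha)$ would be nef and big, and Theorem~\ref{Tosatti} applied on $\widetilde{N}_i$ would force its null locus to be a proper subset --- contradicting the fact that every point of $N_i$ lies on some $V\subseteq N_i$ with vanishing top intersection. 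With this fix your proof is complete and essentially identical to the paper's.
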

\begin{rem}
Lemma \ref{orthogonal higher} is very similar to the Corollary 4.5 in \cite{BDPP13}: If $\alpha\in \mathcal{E}_{\rm NS}$, then the divisorial Zariski decomposition  $\alpha= Z(\alpha)+N(\alpha)$ is such that  $\langle Z(\alpha)^{n-1} \rangle \cdot N(\alpha)=0$. However, the proof of \cite{BDPP13} is based on the orthogonal estimate for divisorial Zariski decomposition of $\mathcal{E}_{\rm NS}$, which is still a conjecture for $\alpha\in \mathcal{E}$. Here we will use Theorem \ref{Tosatti} to prove this lemma directly.
\end{rem}

\begin{proof}[Proof of Lemma \ref{orthogonal higher}]
By Theorem \ref{modified big}, if $\alpha$ is big, then $Z(\alpha)$ is big and modified nef, thus nef by the assumption for $X$. By Theorem \ref{contain exceptional}, the primes $D_1,\ldots,D_q$ contained in the non-K\"ahler locus $E_{nK}(Z(\alpha))$ form an exceptional family, and $\alpha=Z(\alpha)+\sum_{i=1}^{r} a_iD_i$ for $a_i\geq0$ . Since $\text{\rm Null}(Z(\alpha))=E_{nK}(Z(\alpha))$ by Theorem \ref{Tosatti}, we have  $Z(\alpha)^{n-1}\cdot D_i=0$ for each $i$, and thus $Z(\alpha)^{n-1}\cdot N(\alpha)=0$. The lemma is proved.
\end{proof}

\begin{proof}[Proof of Theorem \ref{morse high}]
By Lemma \ref{component}, there exists $\epsilon>0$ such that the prime components of $N(\alpha+t\beta)$ will be the same when $0\leq t\leq\epsilon$. Moreover if we denote $N(\alpha+t\beta)=\sum_{i=1}^{r}a_i(t)N_i$, then each $a_i(t)$ is continuous and decreasing satisfying $a_i(t)>0$. By Lemma \ref{orthogonal higher}, we have $$Z(\alpha+t\beta)^{n-1}\cdot N(\alpha+t\beta)=\sum_{i=1}^{r}a_i(t)Z(\alpha+t\beta)^{n-1}\cdot N_i=0.$$
Since $Z(\alpha+t\beta)$ is modified nef thus nef, we deduce that $Z(\alpha+t\beta)^{n-1}\cdot N_i=0$ for $0\leq t\leq\epsilon$ and $i=1,\ldots,r$.

Since $a_i(t)$ is continuous and decreasing, it is almost everywhere differentiable. Thus $Z(\alpha+t\beta)=\alpha+t\beta-\sum_{i=1}^{r}a_i(t)N_i$ is an a.e. differentiable and continuous curves in the finite dimensional space $H^{1,1}(X,\mathbb{R})$ parametrized by $t$. Meanwhile, since $\alpha\mapsto \alpha^n$ is a quadratic form (possibly degenerate) in $H^{1,1}(X,\mathbb{R})$, we thus deduce that  $\vol_X(\alpha+t\beta)=Z(\alpha+t\beta)^n$ is an a.e. differentiable function with respect to $t$. Therefore, if $\vol_X(\alpha+t\beta)$ and $a_i(t)$ are both differentiable at $t=t_0$, we have
$$
\left.\frac{d}{dt}\right|_{t=t_0}\vol_X(\alpha+t\beta)=nZ(\alpha+t_0\beta)^{n-1}\cdot (\beta-\sum_{i=1}^{r}{a_i}'(t_0)N_i)=nZ(\alpha+t_0\beta)^{n-1}\cdot \beta.
$$
Since $\vol_X(\alpha+t\beta)$ is increasing and continuous, it is also a.e. differentiable and thus we have
\begin{eqnarray}
\label{intergral}
\vol_X(\alpha+s\beta)&=& \vol_X(\alpha)+\int_{0}^{s}\frac{d}{dt}\vol_X(\alpha+t\beta) \mathrm{d}t\nonumber\\
&=&\vol_X(\alpha)+n\int_{0}^{s} Z(\alpha+t\beta)^{n-1}\cdot \beta\ \mathrm{d}t.
\end{eqnarray}
for $0\leq s\leq \epsilon$. Since $Z(\alpha+t\beta)$ is continuous (by Theorem \ref{Continuous}), by $(\ref{intergral})$ we deduce that $\vol_X(\alpha+t\beta)$ is differentiable with respect to $t$ and its derivative
$$
\left.\frac{d}{dt}\right|_{t=t_0}\vol_X(\alpha+t\beta)=nZ(\alpha+t_0\beta)^{n-1}\cdot \beta.\qedhere
$$
\end{proof}

In order to prove transcendental Morse inequality, we will need the following bigness criterion obtained in \cite{Xia13} and \cite{Popo14}.

\begin{thm}\it
Let $X$ be an $n$-dimensional compact K\"ahler manifold. Assume $\alpha$ and $\beta$ are two nef classes on $X$ satisfying
$\alpha^n-n\alpha^{n-1}\cdot\beta>0$,
then $\alpha-\beta$ is a big class.
\end{thm}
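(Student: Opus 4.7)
The plan is to construct a K\"ahler current in the class $\alpha - \beta$, which by definition witnesses bigness. Since both the numerical hypothesis $\alpha^n - n\alpha^{n-1}\cdot\beta > 0$ and bigness are open conditions, I would first fix an auxiliary K\"ahler class $\omega_0$ and replace the pair $(\alpha, \beta)$ by $(\alpha + \varepsilon\omega_0, \beta + \varepsilon\omega_0)$ for $\varepsilon > 0$ small. The difference is unchanged, so it suffices to treat the case where $\alpha$ and $\beta$ are K\"ahler classes admitting smooth K\"ahler representatives $\omega_\alpha \in \alpha$ and $\omega_\beta \in \beta$; the integrated hypothesis then reads $\int_X (\omega_\alpha^n - n\omega_\alpha^{n-1}\wedge\omega_\beta) > 0$.

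The Monge--Amp\`ere step is the heart of the argument, following \cite{Xia13} and \cite{Popo14}. Set $U := \{x \in X : \omega_\alpha^n(x) > n\omega_\alpha^{n-1}\wedge\omega_\beta(x)\}$; by the hypothesis $U$ is open and of positive Lebesgue measure. Using Yau's theorem I would solve a family of complex Monge--Amp\`ere equations $(\omega_\alpha + dd^c\varphi_\varepsilon)^n = c_\varepsilon f_\varepsilon \omega^n$, where $f_\varepsilon$ is a smooth cut-off concentrated on $U$, $\omega$ is a reference Hermitian form, and $c_\varepsilon$ is the normalization making both sides have total mass $\alpha^n$. The resulting K\"ahler forms $T_\varepsilon := \omega_\alpha + dd^c\varphi_\varepsilon \in \alpha$ then carry Monge--Amp\`ere mass predominantly on $U$.

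The crucial input is a pointwise Hermitian linear-algebra inequality of Khovanskii--Teissier flavour: at a point where a positive $(1,1)$-form $T$ satisfies $T^n > n\omega_\beta \wedge T^{n-1}$ pointwise, one can show that $T - \omega_\beta$ is strictly positive as a Hermitian form. Combined with Kolodziej-type uniform $L^\infty$ estimates on $\varphi_\varepsilon$ and the weak-$\ast$ compactness of closed positive currents of bounded mass, one extracts a subsequential weak limit $T \in \alpha$ and verifies that $T - \omega_\beta$ is a closed positive current which dominates a uniform positive multiple of $\omega_0$ on the bulk of $U$, hence is a K\"ahler current in $\alpha - \beta$. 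Demailly's regularization (Theorem \ref{Demailly}) then upgrades the existence of this K\"ahler current to the bigness of $\alpha - \beta$.

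The main obstacle is passing from the integrated hypothesis to a pointwise Hermitian inequality that survives the weak limit. Yau's theorem produces a smooth potential for each $\varepsilon > 0$, but as $\varepsilon \to 0$ the right-hand side of the Monge--Amp\`ere equation degenerates to an $L^\infty$ density and only Kolodziej-type estimates remain. Verifying that the limiting current retains strict positivity against $\omega_\beta$ on an open subset, rather than collapsing to a merely non-negative closed current, is the delicate step where the precise form of the hypothesis $\alpha^n > n\alpha^{n-1}\cdot\beta$ enters; this is the heart of the technical work in \cite{Xia13} and \cite{Popo14}.
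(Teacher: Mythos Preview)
The paper does not prove this statement; it is quoted from \cite{Xia13} and \cite{Popo14} and invoked as a black box in the proof of Theorem~\ref{morse special}. There is therefore nothing in the paper to compare your proposal against, and your sketch is already more informative than what the paper provides.

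As an outline of the Xiao--Popovici argument your proposal has the right ingredients: the perturbation to genuine K\"ahler classes, the appeal to Yau's theorem, and the pointwise eigenvalue lemma all occur in those works. The pointwise lemma you isolate is correct and is the key observation in \cite{Popo14}: if $T>0$ and $T^n>nT^{n-1}\wedge\omega_\beta$ at a point, then diagonalising $\omega_\beta$ with respect to $T$ with eigenvalues $\lambda_j^{-1}$ yields $\sum_j\lambda_j^{-1}<1$, hence each $\lambda_j>1$ and $T>\omega_\beta$ there. One point does need tightening, though. Defining $U$ in terms of the \emph{fixed} representatives $\omega_\alpha,\omega_\beta$ and then concentrating the Monge--Amp\`ere mass of $T_\varepsilon$ on $U$ does not by itself place $T_\varepsilon$ into the regime where your pointwise lemma applies, because the hypothesis $T_\varepsilon^n>nT_\varepsilon^{n-1}\wedge\omega_\beta$ involves $T_\varepsilon$ on both sides rather than $\omega_\alpha$. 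In the cited papers the Monge--Amp\`ere equation is chosen with right-hand side built from $\omega_\beta$ (and, in the duality step, an auxiliary Gauduchon metric), and one concludes via Lamari's positivity criterion rather than by extracting a weak limit supported on a cutoff set. Your final paragraph already flags this integrated-to-pointwise passage as the crux and defers to the references for it, so the imprecision is acknowledged rather than hidden; but if you revise the middle paragraph, align the Monge--Amp\`ere equation with the one actually used in \cite{Popo14} rather than a cutoff $f_\varepsilon$ concentrated on $U$.
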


The proof of the next theorem is similar to that of Theorem \ref{morse 2} and is therefore omitted.
\begin{thm}\it
\label{morse special}
Let $X$ be a compact K\"ahler manifold on which the modified nef cone $\mathcal{MN}$ and the nef cone $\mathcal{N}$ coincide. If $\alpha$ and $\beta$ are nef cohomology classes of type (1,1) on $X$ satisfying the inequality ${\alpha}^n- n\alpha^{n-1} \cdot \beta>0$. Then $\alpha-\beta$ contains a K\"ahler current and $\vol_X(\alpha-\beta) \geq {\alpha}^{n-1}- n\alpha^{n-1} \cdot \beta$.
\end{thm}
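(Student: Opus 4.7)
The plan is to follow the surface proof of Theorem \ref{morse 2} almost verbatim, replacing Theorem \ref{criterion} with the bigness criterion of \cite{Xia13,Popo14} cited just above and Theorem \ref{differential} with the volume formula of Theorem \ref{morse high}. The nef hypotheses together with $\alpha^n - n\alpha^{n-1}\cdot\beta>0$ already imply via the cited criterion that $\alpha-\beta$ is big, hence contains a K\"ahler current; this handles the first half of the conclusion. For the volume bound I would apply Theorem \ref{morse high} to the big class $\alpha-\beta$ and the nef class $\beta$, obtaining
\begin{equation*}
\vol_X(\alpha) \;=\; \vol_X(\alpha-\beta) + n\int_{0}^{1} Z(\alpha-\beta+t\beta)^{n-1}\cdot\beta\,\mathrm{d}t,
\end{equation*}
and substitute $s=1-t$; since $\alpha$ is nef, $\vol_X(\alpha)=\alpha^n$, and the formula becomes
\begin{equation*}
\vol_X(\alpha-\beta) \;=\; \alpha^n - n\int_{0}^{1} Z(\alpha-s\beta)^{n-1}\cdot\beta\,\mathrm{d}s.
\end{equation*}
It then suffices to establish the pointwise upper bound $Z(\alpha-s\beta)^{n-1}\cdot\beta \le \alpha^{n-1}\cdot\beta$ for each $s\in[0,1]$ (noting that $\alpha-s\beta=(\alpha-\beta)+(1-s)\beta\in\mathcal{E}^{\circ}$ so the integrand is well defined).

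For this estimate I would use the concavity and homogeneity of the Zariski projection on $\mathcal{E}$ (Theorem \ref{Continuous}(ii)), together with the fact that nef classes are fixed by $Z$ (nef implies modified nef by Remark \ref{mf=f}, so $Z(\alpha)=\alpha$ and $Z(\beta)=\beta$ by Theorem \ref{modified big}(ii)). These give
\begin{equation*}
\alpha = Z(\alpha) = Z\bigl((\alpha-s\beta)+s\beta\bigr) \;\ge\; Z(\alpha-s\beta) + s\beta,
\end{equation*}
so $\alpha - Z(\alpha-s\beta)$ is pseudo-effective. Expanding telescopically,
\begin{equation*}
\alpha^{n-1}\cdot\beta - Z(\alpha-s\beta)^{n-1}\cdot\beta \;=\; \bigl(\alpha - Z(\alpha-s\beta)\bigr)\cdot \sum_{i=0}^{n-2} \alpha^{i}\cdot Z(\alpha-s\beta)^{n-2-i}\cdot\beta,
\end{equation*}
each summand on the right is the intersection of a pseudo-effective $(1,1)$-class with a product of $n-1$ nef classes; under the assumption $\mathcal{MN}=\mathcal{N}$, $Z(\alpha-s\beta)$ is genuinely nef, and each summand is non-negative because a positive current representing the pseudo-effective factor pairs non-negatively against a wedge product of closed nef $(1,1)$-forms.

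The only real obstacle is this final non-negativity step, which requires every factor in the telescoping sum to be nef; that is exactly where the standing hypothesis $\mathcal{MN}=\mathcal{N}$ intervenes, since without it the intermediate classes $Z(\alpha-s\beta)$ would only be modified nef and the pseudo-effective vs.\ nef pairing could not be invoked. Combining the pointwise bound with the integral formula then yields $\vol_X(\alpha-\beta)\ge \alpha^n - n\alpha^{n-1}\cdot\beta$, which is the claimed inequality (the exponent $\alpha^{n-1}$ on the right-hand side of the statement being a typographical slip for $\alpha^n$, as the surface case $n=2$ of Theorem \ref{morse 2} already makes clear).
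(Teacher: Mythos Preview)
Your proposal is correct and matches the paper's intended argument: the paper explicitly says the proof is ``similar to that of Theorem \ref{morse 2} and is therefore omitted,'' and your write-up is precisely the natural higher-dimensional transcription of that surface proof, with the Xiao--Popovici criterion replacing Theorem \ref{criterion} and Theorem \ref{morse high} replacing Theorem \ref{differential}. The only extra ingredient beyond the $n=2$ case is the telescoping step needed to pass from ``$\alpha - Z(\alpha-s\beta)$ pseudo-effective'' to ``$\alpha^{n-1}\cdot\beta \ge Z(\alpha-s\beta)^{n-1}\cdot\beta$,'' and you handle this correctly by pairing the pseudo-effective factor against a product of $n-1$ nef classes, which is exactly where the hypothesis $\mathcal{MN}=\mathcal{N}$ is used; your identification of the $\alpha^{n-1}$ in the statement as a typo for $\alpha^n$ is also right.
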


\begin{rem}
In \cite{BCJ09}, the authors proved the following differentiability theorem:
\begin{equation}
\label{bou diff}
\left.\frac{d}{dt}\right|_{t=t_0}\vol_X(L+tD)=n\langle L^{n-1}\rangle \cdot D,
\end{equation}
where $L$ is a big line bundle on the  smooth projective variety $X$ and $D$ is a prime divisor. The right-hand side of the equation above involves the \textit{positive intersection product} $\langle L^{n-1}\rangle\in H^{n-1,n-1}_{\geq0}(X,\mathbb{R})$, first introduced in the analytic context in \cite{BDPP13}. Theorem \ref{morse high} could be seen as a transcendental version of (\ref{bou diff}) for some special K\"ahler manifolds. In the general K\"ahler situation, we propose the following conjecture:
\begin{conj}
Let $X$ be a K\"ahler manifold of dimensional $n$, $\alpha$ be a big class. If $\beta$ is a pseudo-effective class, then we have
$$
\left.\frac{d}{dt}\right|_{t=0}\vol_X(\alpha+t\beta)=n\langle \alpha^{n-1}\rangle \cdot \beta.
$$
\end{conj}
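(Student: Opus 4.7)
The plan is to adapt the strategy of Theorem \ref{morse high}, combining Demailly's regularization with Fujita-type approximation for transcendental big classes, and to reduce the general K\"ahler case to a transcendental orthogonality estimate that would extend the known projective result of \cite{BCJ09}. First I would record two standard preliminary reductions. Since $\vol_X^{1/n}$ is concave on the big cone, $t\mapsto \vol_X(\alpha+t\beta)$ admits a right derivative at $t=0$, and moreover the map $\beta\mapsto \left.\frac{d^+}{dt}\right|_{t=0}\vol_X(\alpha+t\beta)$ is linear on the pseudo-effective cone (just like $\beta\mapsto \langle\alpha^{n-1}\rangle\cdot\beta$); hence by density and homogeneity it suffices to treat the case $\beta$ nef and the case $\beta=\{D\}$ for a prime divisor.

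For the nef case, I would proceed as follows. Apply Theorem \ref{Demailly} to select a sequence of K\"ahler currents $T_k\in\alpha$ with analytic singularities, refined so that $\int_X (T_k)_{\rm ac}^n\to \vol_X(\alpha)$. Let $\pi_k:X_k\to X$ be a principalization of the singular ideal of $T_k$, so that $\pi_k^*\alpha=\{\theta_k\}+\{E_k\}$, with $\theta_k$ a smooth form which is K\"ahler modulo $\epsilon_k\omega$ and $E_k$ an effective $\mathbb{R}$-divisor with simple normal crossings. By the definition of the positive intersection product in \cite{BDPP13}, $(\pi_k)_*\{\theta_k\}^{n-1}$ converges in an appropriate monotone sense to $\langle\alpha^{n-1}\rangle$. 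I would then apply the differentiability formula (\ref{volume formula}) of Theorem \ref{morse high} on $X_k$ (or on a further modification arranged so that $\mathcal{MN}=\mathcal{N}$ holds for the relevant classes) to the pair $(\{\theta_k\},\pi_k^*\beta)$, and compare slopes at $t=0$ using $\vol_{X_k}(\pi_k^*\alpha+t\pi_k^*\beta)=\vol_X(\alpha+t\beta)$. Passing to the limit $k\to\infty$ and invoking the continuity properties of $Z$ from Theorem \ref{Continuous} would yield the desired identity. The prime-divisor case $\beta=\{D\}$ would then be handled as in Theorem \ref{differential 2}, by introducing dual coefficients correcting the negative part of $\alpha+t\{D\}$ and invoking Lemma \ref{BKS}.

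The main obstacle is two-fold and lies entirely in the higher-dimensional transcendental setting. First, Theorem \ref{morse high} requires $\mathcal{MN}=\mathcal{N}$, which typically fails on the modifications $X_k$; one must therefore either iterate the modification with no a priori control of the process, or, more satisfactorily, establish (\ref{volume formula}) on arbitrary compact K\"ahler manifolds. The latter in turn hinges on the transcendental orthogonality estimate $\vol_X(\alpha)=\langle\alpha^{n-1}\rangle\cdot\alpha$ for every big class $\alpha$, the central open problem underlying the transcendental Morse inequality. In the projective case this estimate is proved in \cite{BCJ09} by asymptotic multiplier-ideal and base-locus techniques that have no direct analytic counterpart; thus the present plan effectively reduces the conjecture to this single orthogonality statement. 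Proving it transcendentally would likely require a refined pluripotential calculus of Monge-Amp\`ere masses of currents with minimal singularities in the spirit of the BEGZ framework, together with a density argument showing that K\"ahler currents with analytic singularities compute the positive intersection product to arbitrary precision, and this is where the deepest work must be done.
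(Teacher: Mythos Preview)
The statement you are attempting is labeled a \emph{Conjecture} in the paper and is not proved there; the paper explicitly presents it as an open transcendental analogue of the differentiability theorem of \cite{BCJ09}. So there is no ``paper's own proof'' to compare against, and your proposal is better read as a diagnosis of the difficulties than as a proof. On that level your conclusion is essentially right: the obstruction is precisely the transcendental orthogonality estimate $\vol_X(\alpha)=\langle\alpha^{n-1}\rangle\cdot\alpha$ for arbitrary big classes, which is the well-known open ingredient behind the full transcendental Morse inequality.

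That said, two of your ``standard preliminary reductions'' are not standard and would themselves require the conjecture. First, concavity of $\vol_X^{1/n}$ gives that the one-sided directional derivative $\beta\mapsto D^+_\beta\vol_X(\alpha)$ is positively homogeneous and \emph{concave} in $\beta$, not linear; linearity is equivalent to two-sided differentiability, which is part of what must be proved. So you cannot reduce to the cases $\beta$ nef and $\beta=\{D\}$ by a linearity argument at the outset. Second, the plan to pass to a modification $X_k$ ``arranged so that $\mathcal{MN}=\mathcal{N}$ holds for the relevant classes'' is exactly the step that fails in general: the hypothesis $\mathcal{MN}=\mathcal{N}$ of Theorem~\ref{morse high} is not stable under blow-ups, and there is no known procedure producing such a model for an arbitrary big class on a K\"ahler manifold. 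Your final paragraph in effect acknowledges this, so the honest summary is that your outline does not prove the conjecture but correctly isolates the single missing analytic input.
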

\end{rem} 

\section{Generalized Okounkov bodies on K\"ahler manifolds}
\subsection{Definition and relation with the algebraic case}
\label{defin}
Throughout this subsection, $X$ will stand for a K\"ahler manifold of dimensional $n$. Our main goal in this subsection is to generalize the definition of Okounkov body to any pseudo-effective class $\alpha\in H^{1,1}(X,\mathbb{R})$. First of all, we define a valuation-like function. For any positive current $T\in \alpha$ with analytic singularites, we define the valuation-like function
$$  T \rightarrow \nu(T)=\nu_{Y_\bullet}(T)=(\nu_1(T),\ldots\nu_n(T))
$$
as follows. First,  set
$$\nu_1(T)=\sup\{\lambda \mid T-\lambda [Y_1] \geq 0 \},
$$
where $[Y_1]$ is the current of integration over $Y_1$. By Section \ref{Siu} we know that $\nu_1(T)$ is the coefficient $\nu(T,Y_1)$ of the positive current $[Y_1]$ appearing in the Siu decomposition of $T$. Since $T$ has analytic singularities, by the arguments in Section \ref{analytic singularity}, $T_1:=(T-\nu_1[Y_1])|_{Y_1}$ is a well-defined positive current in the pseudo-effective class $(\alpha-\nu_1\{Y_1\})|_{Y_1}$ and it also has analytic singularities. Then take
$$\nu_2(T)=\sup\{\lambda \mid T_1-\lambda [Y_2] \geq 0 \},
$$
and continue in this manner to define the remaining values $\nu_i(T)\in\mathbb{R}^+$.

\begin{rem}
\label{section divisor}
If one assumes $\alpha\in {\rm NS}_\mathbb{Z}(X)$, there exists a holomorphic line bundle $L$ such that $\alpha=c_1(L)$. If $D$ is the divisor of some holomorphic section $s_D\in H^0(X,\oc_X(L))$, then we have
$$
\nu([D])=\mu(s_D),
$$
where $\mu$ is the valuation-like function appeared in the definition of the original Okounkov body. Roughly speaking our definition of valuation-like function has a bigger domain of definition and thus the image of our valuation-like function contains $\bigcup_{m=1}^{\infty} \frac{1}{m}\mu(mL)$.
\end{rem}

For any big class $\alpha$, we define a \emph{$\mathbb{Q}$-convex body} $\Delta_\mathbb{Q}(\alpha)$ (resp. \emph{$\mathbb{R}$-convex body} $\Delta_\mathbb{R}(\alpha)$) to be the set of valuation vectors $\nu(T)$, where $T$ ranges among all the K\"ahler (resp. positive) currents with algebraic (resp. analytic) singularities. Then $\Delta_\mathbb{Q}(\alpha)\subseteq \Delta_\mathbb{R}(\alpha)$.
It is easy to check that this is a convex set in $\mathbb{Q}^n$ (resp. $\mathbb{R}^n$). Indeed, for any two positive currents $T_0$ and $T_1$ with algebraic (resp. analytic) singularities, we have $\nu(\epsilon T_0+(1-\epsilon)T_1)=\epsilon\nu(T_0)+(1-\epsilon)\nu(T_1)$ for $0\leq \epsilon \leq 1$ rational (resp. real). It is also obvious to see the homogeneous property of  $\Delta_\mathbb{Q}(\alpha)$, that is, for all $c\in \mathbb{Q}^+$, we have $$\Delta_\mathbb{Q}(c\alpha)=c\Delta_\mathbb{Q}(\alpha).$$ Indeed, since we have $\nu(cT)=c\nu(T)$ for all $c\in \mathbb{R}^+$, the claim follows directly.

\begin{example}
\label{curve}
Let $L$ be a line bundle of degree $c>0$ on a smooth curve $C$ of genus $g$. Then we have
$$
\Delta_\mathbb{Q}(c_1(L))=\mathbb{Q}\cap [0,c).
$$
Since ${\rm NS}_\mathbb{R}(C)=H^{1,1}(C,\mathbb{R})$, for any ample class $\alpha$ on $C$ we have
$$
\Delta_\mathbb{Q}(\alpha)=\mathbb{Q}\cap [0,\alpha\cdot C).
$$
\end{example}

\begin{lem}\it
\label{boundedness}
Let $\alpha$ be a big class, then the $\mathbb{R}$-convex body $\Delta_\mathbb{R}(\alpha)$ lies in a bounded subset of $\mathbb{R}^n$.
\end{lem}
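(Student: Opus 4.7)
The plan is to bound each coordinate $\nu_i(T)$ uniformly in $T$ by induction on $i$, using a fixed Kähler form $\omega$ on $X$ and the standard trick of converting the positivity of a $(1,1)$-current into a numerical inequality by pairing with an appropriate power of $\omega$.

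For the base case, the positivity of $T-\nu_1(T)[Y_1]$ on $X$ forces $\alpha-\nu_1(T)\{Y_1\}$ to be pseudo-effective, so pairing with $\{\omega\}^{n-1}$ yields
$$ \nu_1(T)\int_{Y_1} \omega^{n-1} \le \alpha\cdot\{\omega\}^{n-1}, $$
and since $\int_{Y_1}\omega^{n-1}>0$ this gives a uniform bound $\nu_1(T)\le M_1$. For the inductive step, assume $\nu_1(T),\dots,\nu_{i-1}(T)$ are already uniformly bounded. The current $T_{i-1}$ lives on $Y_{i-1}$ in the class
$$ \beta_{i-1}(T)=\alpha|_{Y_{i-1}}-\sum_{j=1}^{i-1}\nu_j(T)\{Y_j\}|_{Y_{i-1}}, $$
which is an affine function of the previously bounded coordinates whose coefficients depend only on the flag. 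Pairing the positive current $T_{i-1}-\nu_i(T)[Y_i]$ with $(\omega|_{Y_{i-1}})^{n-i}$ then gives
$$ \nu_i(T)\int_{Y_i}\omega^{n-i}\le \beta_{i-1}(T)\cdot\{\omega\}^{n-i}|_{Y_{i-1}}, $$
whose right-hand side is uniformly bounded by the inductive hypothesis and whose left-hand denominator is strictly positive (and equals $1$ when $i=n$, since $Y_n$ is a point). This closes the induction and proves the lemma.

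The main technical point is not the inequalities themselves but the justification that the inductive construction of the $T_i$ is legitimate, i.e., that $Y_i\not\subset E_+(T_{i-1}-\nu_i(T)[Y_i])$ at each stage so that the restriction of the latter current to $Y_i$ is a well-defined positive current with analytic singularities on $Y_i$, in the sense of Section~\ref{analytic singularity}. This is where the hypothesis of analytic singularities is used essentially: after subtracting the divisorial part $\nu_i(T)[Y_i]$, the residual current has analytic singularities of codimension at least two in $Y_{i-1}$, whereas $Y_i$ has codimension one, and hence cannot be contained in the residual singular locus. Once this verification is in place, the intersection-theoretic argument above delivers the required uniform bound.
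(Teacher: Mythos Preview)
Your argument is correct and is essentially the paper's own proof: fix a K\"ahler class $\omega$ and, inductively on $i$, pair the positive current $T_{i-1}-\nu_i(T)[Y_i]$ on $Y_{i-1}$ with $\omega^{n-i}$ to bound $\nu_i(T)$ in terms of the already-bounded $\nu_1(T),\dots,\nu_{i-1}(T)$.

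One small imprecision in your final paragraph: subtracting only $\nu_i(T)[Y_i]$ does \emph{not} force the residual current to have analytic singularities of codimension at least two in $Y_{i-1}$ --- other divisorial components of $T_{i-1}$ may well survive. The correct (and simpler) reason that $Y_i\not\subset E_+\bigl(T_{i-1}-\nu_i(T)[Y_i]\bigr)$ is that $\nu_i(T)$ is by definition the generic Lelong number of $T_{i-1}$ along $Y_i$, so after the subtraction the generic Lelong number along $Y_i$ vanishes. (The paper in fact takes this well-definedness as already established in the construction of the valuation map, just before the lemma.)
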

\begin{proof}
It suffices to show that there exists a $b>0$ large enough such that $\nu_i(T)<b$ for any positive current $T$ with analytic singularities. We fix a K\"ahler class $\omega$. Choose first of all $b_1>0$ such that $$
(\alpha-b_1Y_1)\cdot \omega^{n-1}<0.
$$
This guarantees that $\nu_1(T)<b_1$ since $\alpha-b_1Y_1\not\in \mathcal{E}$. Next choose $b_2$ large enough so that
$$
((\alpha-aY_1)|_{Y_1}-b_2Y_2)\cdot \omega^{n-2}<0
$$
for all real numbers $0\leq a\leq b_1$. Then $\nu_2(T)\leq b_2$ for any positive current $T$ with analytic singularities. Continuing in this manner we construct $b_i>0$ for $i=1,\ldots,n$ such that $\nu_i(T)\leq b_i$ for any positive current $T$ with analytic singularities. We take $b=\max\{b_i\}$.
\end{proof}

\begin{lem}\it
\label{closure same}
For any big class $\alpha$, $\Delta_\mathbb{Q}(\alpha)$ is dense in $\Delta_\mathbb{R}(\alpha)$. Thus we have $\overline{\Delta_\mathbb{Q}(\alpha)} = \overline{\Delta_\mathbb{R}(\alpha)}$.
\end{lem}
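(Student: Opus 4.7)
The plan is to approximate any positive current $T \in \alpha$ with analytic singularities by Kähler currents in $\alpha$ with algebraic singularities, in such a way that the valuation vectors converge. Since $\Delta_{\mathbb{Q}}(\alpha) \subseteq \Delta_{\mathbb{R}}(\alpha)$ is already observed, it suffices to prove $\Delta_{\mathbb{R}}(\alpha) \subseteq \overline{\Delta_{\mathbb{Q}}(\alpha)}$. My approach combines two perturbations: a convex combination with a fixed Kähler current to promote strict positivity, followed by Demailly's regularization (Theorem~\ref{Demailly}) to replace analytic singularities by algebraic ones.

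Since $\alpha$ is big, fix once and for all a Kähler current $T_K \in \alpha$ with algebraic singularities, together with a Hermitian form $\omega$ and a constant $c>0$ such that $T_K \geq c\omega$. Given a positive current $T\in\alpha$ with analytic singularities whose valuation $\nu(T)$ is well-defined along the flag $Y_{\bullet}$, set
$$T_\epsilon := (1-\epsilon)T + \epsilon T_K, \qquad \epsilon \in (0,1).$$
Then $T_\epsilon \in \alpha$, satisfies $T_\epsilon \geq \epsilon c \omega$, and inherits generalized analytic singularities from its summands. Because the Siu decomposition and the restriction-after-subtraction procedure are both additive, the valuation-like function is affine along this line, giving $\nu(T_\epsilon) = (1-\epsilon)\nu(T) + \epsilon\nu(T_K)$; in particular $\nu(T_\epsilon)\to \nu(T)$ as $\epsilon\to 0$. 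Now apply Theorem~\ref{Demailly} with $\gamma = \epsilon c\omega$ to obtain a sequence $T_{\epsilon,k}\in\alpha$ of currents with algebraic singularities such that $T_{\epsilon,k}\to T_\epsilon$ weakly, $T_{\epsilon,k}\geq \epsilon c\omega - \epsilon_k\omega$ with $\epsilon_k \searrow 0$, and $\nu(T_{\epsilon,k},x)\nearrow \nu(T_\epsilon,x)$ uniformly in $x \in X$. Once $\epsilon_k < \epsilon c$, the current $T_{\epsilon,k}$ is itself a Kähler current, so $\nu(T_{\epsilon,k}) \in \Delta_{\mathbb{Q}}(\alpha)$.

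The crux is to verify $\nu(T_{\epsilon,k})\to \nu(T_\epsilon)$ in $\mathbb{R}^n$ as $k \to \infty$. For the first coordinate the uniform convergence of Lelong numbers yields
$$\nu_1(T_{\epsilon,k}) = \inf_{x\in Y_1}\nu(T_{\epsilon,k},x) \ \longrightarrow\ \inf_{x\in Y_1}\nu(T_\epsilon,x) = \nu_1(T_\epsilon).$$
For the higher coordinates one must follow the restricted currents $(T_{\epsilon,k}-\nu_1(T_{\epsilon,k})[Y_1])\big|_{Y_1}$ and propagate the uniform Lelong-number estimate to $Y_1$; this is the main technical obstacle, because weak convergence of $(1,1)$-currents does not automatically pass to restrictions to a subvariety. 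The key point is that each $T_{\epsilon,k}$ has analytic singularities whose generic Lelong number along $Y_1$ converges to that of $T_\epsilon$, so the restrictions are genuine positive currents on $Y_1$ in a converging cohomology class, and the ambient uniform Lelong-number control transfers point by point to $Y_1$. An induction on codimension then yields $\nu_i(T_{\epsilon,k})\to \nu_i(T_\epsilon)$ for every $i$. Finally, a diagonal extraction — choose $\epsilon$ small so $|\nu(T_\epsilon)-\nu(T)|<\eta/2$, then $k$ large so $|\nu(T_{\epsilon,k})-\nu(T_\epsilon)|<\eta/2$ — produces $\nu(T_{\epsilon,k})\in \Delta_{\mathbb{Q}}(\alpha)$ arbitrarily close to $\nu(T)$, which gives $\Delta_{\mathbb{R}}(\alpha)\subseteq\overline{\Delta_{\mathbb{Q}}(\alpha)}$ and completes the argument.
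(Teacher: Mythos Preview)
Your approach mirrors the paper's almost exactly: reduce to the case of a K\"ahler current via the convex combination $T_\epsilon = (1-\epsilon)T + \epsilon T_K$ (the paper writes the identical formula and invokes Lemma~\ref{boundedness} to get $\nu(T_\epsilon)\to\nu(T)$), and then approximate a K\"ahler current with analytic singularities by K\"ahler currents with algebraic singularities. The paper declares this second step ``easy to verify'' and gives no details; you attempt to justify it via Demailly regularization (Theorem~\ref{Demailly}).

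The difficulty you flag is real for the route you chose, and your argument does not close it. Theorem~\ref{Demailly} controls Lelong numbers on $X$, but the coordinates $\nu_2,\dots,\nu_n$ are Lelong numbers of \emph{restricted} currents on $Y_1, Y_2,\dots$, and there is no general principle by which the uniform convergence $\nu(T_{\epsilon,k},x)\to\nu(T_\epsilon,x)$ on $X$ descends to the restrictions. Your sentence ``the ambient uniform Lelong-number control transfers point by point to $Y_1$'' is exactly the unjustified step: weak convergence together with ambient Lelong-number convergence does not imply convergence of Lelong numbers after restriction to a hypersurface.

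The simpler argument the paper presumably intends avoids this issue entirely. By definition a current with analytic singularities carries a single global coefficient $c\in\mathbb{R}_{>0}$ in front of the logarithmic potential. Fix a smooth representative $\theta\in\alpha$ and, for rational $c_k\nearrow c$, set
\[
S_k \;:=\; \tfrac{c_k}{c}\,T \;+\; \bigl(1-\tfrac{c_k}{c}\bigr)\theta \ \in\ \alpha.
\]
For $c_k$ close to $c$ this is still a K\"ahler current, and locally $S_k=\tfrac{c_k}{2}dd^c\log\sum|f_i|^2 + dd^c(\text{bounded})$, so it has algebraic singularities along the \emph{same} ideal. Since the smooth term $\theta$ contributes nothing to any Lelong number and restricts to a smooth form at each stage of the flag, a straightforward induction gives $\nu_i(S_k)=\tfrac{c_k}{c}\,\nu_i(T)$ for all $i$, hence $\nu(S_k)\to\nu(T)$. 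No regularization theorem, and no passage of Lelong-number estimates through restrictions, is needed.
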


\begin{proof}
It is easy to verify that if $T$ is a K\"ahler current with analytic singularities, then for any $\epsilon>0$, there exists  a K\"ahler current $S_\epsilon$ with algebraic singularities such that $\left\lVert\nu(S_\epsilon)-\nu(T)\right\rVert<\epsilon$ with respect to the standard norm in $\mathbb{R}^n$. For the general case, We fix a K\"ahler current $T_0\in i\Theta(L)$ with algebraic singularities. Then for any positive current $T$ with analytic singularities, $T_{\epsilon}:=(1-\epsilon )T+\epsilon T_0$ is still a K\"ahler current. By Lemma \ref{boundedness}, $\left\lVert\nu(T_{\epsilon})-\nu(T)\right\rVert = \epsilon\left\lVert(\nu(T_0)-\nu(T))\right\rVert$ will tend to 0 since $\nu(T)$ is uniformly bounded for any positive current $T$ with analytic singularities. Thus $\Delta_\mathbb{Q}(\alpha)$ is dense in $\Delta_\mathbb{R}(\alpha)$.
\end{proof}

Now we study the relations between $\Delta_{\mathbb{Q}}(c_1(L))$ and $\Delta(L)$ for $L$ a big line bundle on $X$. First we begin with the following two lemmas.

\begin{lem}[Extension property]\it
\label{Extension property}
Let $L$ be a big line bundle on the projective variety $X$ of dimension $n$, with a singular Hermitian metric $h=e^{-\varphi}$ satisfying
$$
i\Theta_{L,h}=dd^c\varphi\geq \epsilon\omega
$$
for some $\epsilon>0$ and a given K\"ahler form $\omega$. If the restriction of $\varphi$ on a smooth hypersurface $Y$ is not identically equal to $-\infty$, then there exists a positive integer $m_0$ which depends only on $Y$ so that any holomorphic section $s_m\in H^0(Y,\oc_Y(mL)\otimes \mathcal{I}(m\varphi|_Y))$ can be extended to $ S_m\in H^0(X,\oc_X(mL)\otimes \mathcal{I}(m\varphi))$ for any $m\geq m_0$.
\end{lem}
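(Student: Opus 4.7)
The plan is to derive this extension from the Ohsawa--Takegoshi--Manivel $L^2$ extension theorem, applied to line-bundle valued sections. The key device is adjunction: writing
$$ mL = K_X + Y + E_m, \qquad E_m := mL - K_X - Y, $$
one has $(mL)|_Y \cong K_Y \otimes E_m|_Y$, so a section $s_m \in H^0(Y,\oc_Y(mL))$ is the same data as a section of $K_Y \otimes E_m|_Y$, and the sought-after extensions to $H^0(X, mL)$ are precisely $(K_X + Y + E_m)$-valued extensions on $X$.

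First I would fix an auxiliary defining section $s_Y \in H^0(X, \oc_X(Y))$ with a smooth Hermitian metric $h_F$ on $F := \oc_X(Y)$ normalized so that $|s_Y|_{h_F} \leq 1$, and a smooth Hermitian metric on $-(K_X + Y)$. Equipping $E_m$ with the singular metric built by combining $m\varphi$ on the $mL$-part with these fixed smooth metrics, the curvature current satisfies
$$ i\Theta_{E_m} \;\geq\; m\,dd^c\varphi - i\Theta_{K_X + Y} \;\geq\; m\epsilon\,\omega - C_Y\,\omega, $$
where $C_Y$ is a positive constant determined only by $Y$ and the fixed background data. Taking $m_0$ large enough (depending on $\epsilon$, $\omega$, $K_X$ and $Y$, but independent of $\varphi$ and $s_m$) guarantees that $m\epsilon\,\omega - C_Y\,\omega$ dominates $\alpha^{-1} i\Theta_{F, h_F}$ uniformly on $X$ for all $m \geq m_0$; this verifies both curvature hypotheses in Ohsawa--Takegoshi.

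Next, the hypothesis $s_m \in H^0(Y,\oc_Y(mL) \otimes \mathcal{I}(m\varphi|_Y))$, together with $\varphi|_Y \not\equiv -\infty$ and compactness of $Y$, is exactly the statement that $\int_Y |s_m|^2_h\, e^{-m\varphi|_Y}\, dV_Y < \infty$. Applying Ohsawa--Takegoshi then produces an extension $S_m \in H^0(X, mL)$ of $s_m$ along $Y$ satisfying
$$ \int_X |S_m|^2_h\, |s_Y|^{-2(1-\delta)}\, e^{-m\varphi}\, dV_X \;\leq\; C \int_Y |s_m|^2_h\, e^{-m\varphi|_Y}\, dV_Y \;<\; \infty $$
for some small $\delta > 0$ and a constant $C$ depending only on the geometry. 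Since $|s_Y|_{h_F} \leq 1$ gives $|s_Y|^{-2(1-\delta)} \geq 1$ on $X$, this a fortiori bounds $\int_X |S_m|^2_h\, e^{-m\varphi}\, dV_X$, placing $S_m$ in $H^0(X, \oc_X(mL) \otimes \mathcal{I}(m\varphi))$ as required.

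The main obstacle is the bookkeeping in verifying that the threshold $m_0$ can be chosen uniformly in the input section $s_m$ and in the singular weight $\varphi$ (beyond its fixed lower positivity bound $\epsilon\omega$). This reduces entirely to the pointwise curvature inequality displayed above, which, by compactness of $X$, holds once $m \geq m_0 := \lceil (C_Y + \alpha^{-1}\|i\Theta_{F,h_F}\|_\omega)/\epsilon\rceil$; the constant in the $L^2$ estimate from Ohsawa--Takegoshi depends only on $(X,\omega)$ and $(F, h_F)$ in Demailly's formulation, not on the weight $\varphi$ or on $s_m$, which is what makes the uniform statement work.
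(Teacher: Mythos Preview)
Your proof is correct and follows essentially the same route as the paper: both arguments write $mL = K_X + Y + E_m$ via adjunction, equip $E_m$ with the singular metric induced by $m\varphi$ together with fixed smooth metrics on $K_X$ and $\oc_X(Y)$, and observe that the curvature hypotheses of Ohsawa--Takegoshi are satisfied once $m$ is large enough to absorb the bounded negativity coming from $K_X$ and $Y$. Your treatment is slightly more explicit about why the extension lands in $\mathcal{I}(m\varphi)$ and about the uniformity of $m_0$, but the underlying argument is the same.
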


We need the following Ohsawa-Takegoshi extension theorem to prove Lemma \ref{Extension property}.
\begin{thm}[Ohsawa-Takegoshi]\it
\label{Ohsawa}
Let $X$ be a smooth projective variety. Let $Y$ be a smooth divisor defined by a holomorphic section of the line bunle $H$ with a smooth metric $h_0=e^{-\psi}$. Let L be a holomorphic line bunle with a singular metric $h=e^{-\phi}$, satisfying the curvature assumptions
$$
dd^c\phi\geq0
$$
and
$$
dd^c\phi\geq \delta dd^c\psi
$$
with $\delta>0$. Then for any holomorphic section $s\in H^0(Y,\oc_Y(K_Y+L)\otimes \mathcal{I}(h|_Y))$, there exists a global holomorphic section $S\in H^0(X,\oc_X(K_X+L+Y)\otimes \mathcal{I}(h))$ such that $S|_Y=s$.
\end{thm}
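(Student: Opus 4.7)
The statement to prove is the classical Ohsawa--Takegoshi $L^2$ extension theorem, so the plan is to follow the standard $L^2$ strategy: reduce the problem of constructing a holomorphic extension to a $\bar\partial$-equation on $X$, and solve that equation with a carefully twisted weighted $L^2$ estimate that forces vanishing along $Y$. Concretely, I would start by picking a holomorphic section $\sigma\in H^0(X,H)$ cutting out $Y$ with $\psi = \log|\sigma|^2$ (up to an additive smooth term), and fixing a smooth cutoff $\chi$ supported in a tubular neighborhood $U$ of $Y$ with $\chi\equiv 1$ in a smaller neighborhood. Using Cartan's Theorem B on $U$, extend $s$ to a holomorphic section $\tilde s$ of $K_X+L$ defined on $U$, view $\chi\tilde s$ as a global smooth section of $K_X+L+Y$ (after multiplying by $1/\sigma$ in the appropriate way), and look for the desired holomorphic extension in the form $S = \chi\tilde s - u$, where $u$ is a section of $K_X+L+Y$ solving
\begin{equation*}
\bar\partial u \;=\; \bar\partial(\chi \tilde s)
\end{equation*}
and vanishing along $Y$, i.e.\ $u|_Y = 0$.

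The technical core is then to solve this $\bar\partial$-equation with an $L^2$ estimate that \emph{controls the singular behavior at $Y$}. This is achieved through a twisted Bochner--Kodaira--Nakano identity: instead of the usual Hörmander estimate, one replaces the weight $e^{-\phi}$ by a twisted weight incorporating $-\log(|\sigma|^2 e^{-\psi})$ (with suitable truncation), and uses an auxiliary function $\eta$ and a correction term controlling $\partial\eta\wedge\bar\partial\eta$. The standard choice of $\eta$ and the twisted estimate (in the form refined by Berndtsson, Demailly, Siu, and others) yields an inequality of the type
\begin{equation*}
\int_X |u|^2_h\, e^{-\phi}\, dV_X \;\leq\; C\, \delta^{-1}\int_Y |s|^2_{h|_Y}\, e^{-\phi|_Y}\, dV_Y,
\end{equation*}
provided the curvature hypotheses $dd^c\phi\geq 0$ and $dd^c\phi\geq \delta\, dd^c\psi$ are used to ensure the positivity of the twisted curvature operator appearing in Bochner--Kodaira--Nakano.

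With such an estimate in hand, the right-hand side is finite because $s\in H^0(Y,\oc_Y(K_Y+L)\otimes \mathcal I(h|_Y))$ and the adjunction isomorphism $K_X + Y|_Y \simeq K_Y$ matches the volume forms; hence $u$ is genuinely $L^2$ against the singular weight $e^{-\phi}/|\sigma|^2$, which \emph{forces} $u$ to vanish along $Y$. Consequently $S = \chi\tilde s - u$ is a global holomorphic section of $K_X + L + Y$ with $S|_Y = s$, and the weighted $L^2$ bound shows $S\in \mathcal I(h)$.

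The main obstacle is the sharp twisted $L^2$ estimate. Running a naive Hörmander estimate with weight $\phi + \log|\sigma|^2$ fails because the curvature contribution from $\log|\sigma|^2$ is the current $[Y]$, which is not a smooth form and cannot directly serve as a weight. The heart of the proof is therefore the analytic trick of introducing an auxiliary function $\eta$ with controlled derivatives and a compensating term that converts the singular weight into a family of smooth weights for which the $L^2$ estimate survives uniformly, letting one pass to the limit and use dominated convergence to detect the vanishing of $u$ on $Y$. The precise choice of $\eta$ and the algebraic identity that makes the twisted curvature non-negative under the hypothesis $dd^c\phi \geq \delta\, dd^c\psi$ is the decisive computation; once this is set up, the rest of the proof is formal manipulation of the $\bar\partial$-complex and adjunction.
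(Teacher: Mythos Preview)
The paper does not actually prove this theorem: it is stated as the classical Ohsawa--Takegoshi $L^2$ extension theorem and invoked as a black box in the proof of Lemma~\ref{Extension property}. Your proposal is a reasonable outline of the standard proof of this well-known result (smooth extension, reduction to a $\bar\partial$-equation, twisted Bochner--Kodaira--Nakano estimate forcing vanishing along $Y$), so there is nothing to compare; in the context of this paper one would simply cite the literature rather than reproduce the argument.
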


\begin{proof}[Proof of Lemma \ref{Extension property}]
Taking a smooth metric $e^{-\psi}$ and $e^{-\eta}$ on $Y$ and $K_X$, we can choose $m_0$ large enough satisfying the curvature assumptions
$$
dd^c(m\phi-\eta-\psi)\geq0
$$
and
$$
dd^c(m\phi-\eta-\psi)\geq dd^c\psi
$$
for any $m\geq m_0$.

By Theorem \ref{Ohsawa}, any holomorphic section $s\in H^0(Y,\oc_Y(K_Y+(mL-K_X-Y)|_Y)\otimes \mathcal{I}(h^m|_Y))$ can be extended to a global holomorphic section $S\in H^0(X,\oc_X(mL)\otimes \mathcal{I}(h^m))$ such that $S|_Y=s$. By the adjunction theorem, we have $(K_X+Y)|_Y=K_Y$, thus the lemma is proved.

\end{proof}

\begin{lem}\it
\label{Riemann}
Let $L$ be a big line bundle on the Riemann surface $C$ with a singular Hermitian metric $h=e^{-\varphi}$ such that $\varphi$ has algebraic singularities and
$$
i\Theta_{L,h}=dd^c\varphi\geq \epsilon\omega
$$
for some $\epsilon>0$. Then for a fixed point $p$, there exists an integer $k>0$ such that we have a holomorphic section $s_k\in H^0(C,\oc_C(kL)\otimes \mathcal{I}(h^k))$ satisfying $\ord_p(s_k)=k\nu(i\Theta_{L,h},p)$.
\end{lem}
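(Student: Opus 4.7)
My plan is to construct $s_k$ explicitly as the product of a holomorphic section of an auxiliary large-degree line bundle on $C$ with the canonical section cutting out the singular locus of $\varphi$. To set things up, observe that since $\varphi$ has algebraic singularities and $C$ is one-dimensional, its polar set $\{x \in C \mid \nu(i\Theta_{L,h}, x) > 0\}$ reduces to a finite collection $\{q_1, \ldots, q_N\}$, with each Lelong number $c_j := \nu(i\Theta_{L,h}, q_j)$ rational by the normalisation recalled in Section \ref{analytic singularity}. I will relabel so that $p = q_1$ whenever $p$ lies in this polar set; otherwise $c_p = 0$ and I must ensure $s_k(p) \neq 0$.

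The key estimate is the strict inequality $\deg L > \sum_j c_j$. The Siu decomposition reads $i\Theta_{L,h} = \sum_{j=1}^{N} c_j [q_j] + R$; on the curve $C$ the residual current $R$ has no Lelong numbers, hence coincides with the absolutely continuous part $(i\Theta_{L,h})_{\mathrm{ac}}$, which the curvature hypothesis forces to satisfy $R \geq \epsilon\omega$. Integrating over $C$ then yields
\[
\deg L \;=\; \sum_{j=1}^{N} c_j + \int_C R \;\geq\; \sum_{j=1}^{N} c_j + \epsilon\int_C \omega \;>\; \sum_{j=1}^{N} c_j.
\]

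With this in hand I will pick $k \in \mathbb{N}$ such that $kc_j \in \mathbb{Z}$ for every $j$ and such that $\deg L_k \geq 2g$, where $L_k := kL - \sum_j kc_j [q_j]$ and $g$ is the genus of $C$; both conditions are achievable for $k$ sufficiently large and divisible by the denominators of the $c_j$, thanks to the strict inequality above. A standard Riemann--Roch/Serre duality argument then produces a section $t \in H^0(C, L_k)$ with $t(p) \neq 0$. Multiplying $t$ by the canonical section $s_{D_k}$ of $\oc_C(\sum_j kc_j[q_j])$, which vanishes to order exactly $kc_j$ at each $q_j$, I set $s_k := t \cdot s_{D_k} \in H^0(C, kL)$, so that $\ord_{q_j}(s_k) = kc_j$ for every $j$ and in particular $\ord_p(s_k) = k\nu(i\Theta_{L,h}, p)$. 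To verify $s_k \in \mathcal{I}(h^k)$, note that $\varphi$ is locally bounded away from $\{q_j\}$, while near each $q_j$ the algebraic singularities give $\varphi(z) = c_j \log|z| + O(1)$ in a local chart $z$ centred at $q_j$; combined with $|s_k|^2 \sim |z|^{2kc_j}$, this keeps $|s_k|^2 e^{-k\varphi}$ locally bounded, a fortiori locally integrable.

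The main obstacle is precisely the strict inequality $\deg L > \sum_j c_j$: this is where the curvature hypothesis $i\Theta_{L,h} \geq \epsilon\omega$ is genuinely needed, since without strict positivity one could have $\deg L_k = 0$ and no reliable supply of sections of $L_k$. Everything else is a routine packaging of Riemann--Roch on a curve together with the standard description of multiplier ideals of metrics with analytic singularities.
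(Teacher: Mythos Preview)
Your proposal is correct and follows essentially the same approach as the paper: both arguments use the curvature hypothesis to obtain the strict inequality $\sum_j c_j < \deg L$, then invoke Riemann--Roch on the curve to manufacture a section of $kL$ with the prescribed vanishing at the poles of $\varphi$. Your version is in fact more explicit than the paper's, which simply asserts the existence of $k$ and $s_k$ ``by Riemann--Roch''; one harmless slip is your claim that $\ord_{q_j}(s_k)=kc_j$ for \emph{every} $j$, whereas you only arranged $t(p)\neq 0$, so at the remaining $q_j$ you only know $\ord_{q_j}(s_k)\geq kc_j$---but this inequality is all that is needed for the local integrability of $|s_k|^2 e^{-k\varphi}$.
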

\begin{proof}
Since $\varphi$ has algebraic singularities, we have the following Lebsegue decomposition
$$
i\Theta_{L,h}=({i\Theta_{L,h}})_{\rm ac}+\sum_{i=1}^{r}c_ix_i,
$$
where each $c_i>0$ is rational and $x_1,\ldots,x_r$ are the log poles of $i\Theta_{L,h}$ (possibly $p$ is among them).
Since we have $$\int_Ci(\Theta_{L,h})_{\rm ac}+\sum_{i=1}^{r}c_i=\deg(L),$$
thus
$$
\sum_{i=1}^{r}c_i<\deg(L).
$$

By Riemann-Roch theorem there exists an integer $k>0$ satisfying\\
\begin{enumerate}[\upshape (i)]
\item $kc_i$ is integer,
\item there is a holomorphic section $s_k\in H^0(C,\oc_C(kL))$ such that ${\rm ord}_{x_i}(s_k)\geq kc_i$ and ${\rm ord}_p(s_k)=k\nu(i\Theta_{L,h},p)$.
\end{enumerate}
Thus $s_k$ is locally integrable with respect to the weight $e^{-k\varphi}$. The theorem is proved.

\end{proof}

\begin{thm}\it
\label{approximation}
Let $X$ be a smooth projective manifold of dimension $n$. For any K\"ahler current $T\in c_1(L)$ with algebraic singularities, there exists a holomorphic section $s\in H^0(X,\oc_X(kL))$ such that $\mu(s)=k\nu(T)$, i.e.,  we have $$\nu(T)\in \displaystyle\bigcup_{m=1}^{\infty} \frac{1}{m}\mu(mL).$$
In particular,
$$
\Delta_{\mathbb{Q}}(c_1(L))\subseteq \displaystyle\bigcup_{m=1}^{\infty} \frac{1}{m}\mu(mL) \subseteq \Delta(L).
$$
\end{thm}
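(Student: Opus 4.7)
The plan is to prove a strengthened version of the theorem by induction on $n=\dim X$: for any K\"ahler current $T$ with algebraic singularities in $c_1(L)$, there exist $k$ and a section $s\in H^0(X,\oc_X(kL)\otimes \mathcal{I}(h^k))$ with $\mu(s)=k\nu(T)$, where $h=e^{-\varphi}$ is a singular metric on $L$ with $i\Theta_{L,h}=T$. Including the multiplier-ideal membership in the conclusion is what makes the inductive step compatible with the $L^2$ extension theorem. The base case $n=1$ is precisely Lemma \ref{Riemann}. Throughout I set $\nu_i:=\nu_i(T)\in\mathbb{Q}_{\geq 0}$, rationality being a consequence of the algebraic singularities of $T$.

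For the inductive step I first produce a companion K\"ahler current on $Y_1$. The Siu decomposition gives $T':=T-\nu_1[Y_1]$; because $T\geq \epsilon\omega$ while $[Y_1]$ contributes only to the singular part, $T'\geq\epsilon\omega$ is still a K\"ahler current with algebraic singularities, now representing the big class $c_1(L)-\nu_1\{Y_1\}$. Its restriction $T_1:=T'|_{Y_1}$ is therefore a K\"ahler current with algebraic singularities on $Y_1$ in the big class $(c_1(L)-\nu_1\{Y_1\})|_{Y_1}$. Applying the induction hypothesis to $T_1$ with the truncated flag $Y_1\supset Y_2\supset \cdots\supset Y_n$, for some sufficiently divisible $k_1$ I obtain
$$\tilde s_1\in H^0\bigl(Y_1,\oc_{Y_1}(k_1(L-\nu_1 Y_1)|_{Y_1})\otimes \mathcal{I}(k_1\varphi'|_{Y_1})\bigr)$$
with Okounkov vector $k_1(\nu_2,\ldots,\nu_n)$, where $\varphi':=\varphi-\nu_1\log|f_{Y_1}|^2$ is a local potential of $T'$.

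The final step is to extend $\tilde s_1$ back to $X$ via Lemma \ref{Extension property}, applied to the line bundle $k_1(L-\nu_1 Y_1)$ equipped with the K\"ahler current metric $h^{k_1}\,|f_{Y_1}|^{-2k_1\nu_1}$ whose curvature is $k_1 T'\geq k_1\epsilon\omega$. After further enlarging $k_1$ past the threshold $m_0$ of the lemma, $\tilde s_1$ lifts to $\tilde S_1\in H^0(X,\oc_X(k_1(L-\nu_1 Y_1))\otimes \mathcal{I}(k_1\varphi'))$. Setting $S:=f_{Y_1}^{k_1\nu_1}\tilde S_1\in H^0(X,\oc_X(k_1L))$, the non-vanishing of $\tilde s_1$ on $Y_1$ gives $\ord_{Y_1}(S)=k_1\nu_1$, and dividing $S$ by $f_{Y_1}^{k_1\nu_1}$ and restricting to $Y_1$ recovers $\tilde s_1$, so $\mu(S)=k_1\nu(T)$. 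The identity $|S|^2 e^{-k_1\varphi}=|\tilde S_1|^2 e^{-k_1\varphi'}$ yields $S\in \mathcal{I}(k_1\varphi)$, closing the induction.

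The principal technical obstacle is coordinating the integrality constraints on $k_1$: it must clear the denominator of $\nu_1$ so that $k_1(L-\nu_1 Y_1)$ is a genuine line bundle, exceed the threshold $m_0$ of Lemma \ref{Extension property}, and also be compatible with the value supplied by the inductive hypothesis on $Y_1$. All three are met by passing to a common multiple, which is legitimate because the inductive conclusion may be formulated for any sufficiently divisible $k$. The displayed chain $\Delta_{\mathbb{Q}}(c_1(L))\subseteq \bigcup_{m\geq 1}\frac{1}{m}\mu(mL)\subseteq \Delta(L)$ then follows at once: the first inclusion is what the induction has just proved, and the second is immediate from the definition of $\Delta(L)$ as the closed convex hull of $\bigcup_{m\geq 1}\frac{1}{m}\mu(mL)$.
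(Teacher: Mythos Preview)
Your proof is correct and follows essentially the same route as the paper's: both use Lemma~\ref{Riemann} at the curve level and Lemma~\ref{Extension property} to climb back up the flag, tracking multiplier-ideal membership so that the $L^2$ extension applies at each stage. The only difference is organizational: you package the argument as an induction on $\dim X$, whereas the paper unwinds this explicitly by defining all the intermediate currents $T_i$, line bundles $L_i$, and metrics $h_i$ at once, fixing a single threshold $k_0$ that makes the extension exact at every level simultaneously, and then invoking Lemma~\ref{Riemann} on $Y_{n-1}$ for some $k\geq k_0$.

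One point deserves sharpening. Your stated inductive hypothesis is ``there exist $k$ and $s$'', but in the obstacle paragraph you (correctly) observe that what is really needed is ``for every sufficiently divisible $k$ there exists $s$''. Without this stronger form, the $k_1$ handed to you by the hypothesis on $Y_1$ may fall below the threshold $m_0$ of Lemma~\ref{Extension property}, and replacing $\tilde s_1$ by a power $\tilde s_1^{\,N}$ does \emph{not} automatically land in $\mathcal{I}(Nk_1\varphi'|_{Y_1})$. The strengthened hypothesis does hold (the proof of Lemma~\ref{Riemann} already gives it on curves, and it propagates), but you should state it as the inductive claim from the outset. The paper's explicit version sidesteps this by choosing one $k$ at the bottom that already dominates all the extension thresholds.
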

\begin{proof}
First, set $\nu_i=\nu_i(T)$ and define
$$ T_0:=T,\ T_1:=(T_0-\nu_1[Y_1])|_{Y_1},\ \ldots\ , T_{n-1}:=(T_{n-2}-\nu_{n-1}[Y_{n-1}])|_{Y_{n-1}};
$$
$$L_0:=L-\nu_1Y_1, \ L_1:=L_0|_{Y_1}-\nu_2Y_2,\ \ldots\ ,
L_{n-2}:=L_{n-3}|_{Y_{n-2}}-\nu_{n-1}Y_{n-1}.
$$

Since $T_0\geq \epsilon \omega$, we have $T_1\geq \epsilon \omega|_{Y_1}$, \ldots , $T_{n-1}\geq \epsilon \omega|_{Y_{n-1}}$. Since each $\nu_i$ is rational, we could find an integer $m$ to make each $m\nu_i$ be integer so that each $mL_i$ is a big line bundle on $Y_i$. If we could prove
$$\nu(mT)\in \displaystyle\bigcup_{k=1}^{\infty} \frac{1}{k}\mu(kmL),$$
then we will have
$$\nu(T)\in \displaystyle\bigcup_{m=1}^{\infty} \frac{1}{m}\mu(mL),$$
by the homogeneous property $\frac{1}{m}\nu(mT)=\nu(T)$.
Thus we can assume that each $\nu_i(T)$ is an integer after we replace $L$ by $mL$ and $T$ by $mT$.

Firstly, since $T_0\in c_1(L)$ is a K\"ahler current with algebraic singularities, there exists a singular metric $h=e^{-\varphi_0}$ on $L$ whose curvature current is $T_0$ and $\varphi$ has algebraic singularities; on the other hand,  there is a canonical metric $e^{-\eta_0}$ on $\oc_{Y_0}(Y_1)$ such that $dd^c\eta_0=[Y_1]$ in the sense of currents, thus by the definition of $\nu_1$ we deduce that $h_0:=e^{-\varphi_0+\nu_1\eta_0}$ is a singular metric of $L_0$ such that $-\varphi_0+\nu_1\eta_0$ does not vanish identically on $Y_1$, and $h_0|_{Y_1}$ is a singular metric of $L_0|_{Y_1}$  with algebraic singularities whose curvature current is $T_1\geq \epsilon\omega|_{Y_1}$.

Secondly, there is a canonical singular metric $e^{-\eta_1}$ of $\oc_{Y_1}(Y_2)$ on $Y_1$
with the curvature current $[Y_2]$. Thus the singular metric $h_1:=h_0|_{Y_1}+e^{\nu_2\eta_1}$ of the big line bundle $L_1$ gives a curvature current $T_1-\nu_2[Y_2]\geq \epsilon\omega|_{Y_1}$. We continue in this manner to define the remaining singular metrics $h_i:=h_{i-1}|_{Y_i}+e^{\nu_{i+1}\eta_i}$ of the big line bundle $L_i$  on $Y_i$ with curvature current $T_i-\nu_{i+1}[Y_{i+1}]\geq \epsilon\omega|_{Y_i}$ for $i=0, \ldots ,n-1$. It is easy to see that $h_i|_{Y_{i+1}}$ is well-defined.

By Lemma \ref{Extension property}, there exists a $k_0$ such that for each $k\geq k_0$, the following short sequence is exact
\begin{equation}
\label{exact}
H^0(Y_{i-1},\oc_{Y_{i-1}}(kL_{i-1})\otimes \mathcal{I}(h^k_{i-1}))\longrightarrow H^0(Y_{i},\oc_{Y_{i}}(kL_{i-1})\otimes \mathcal{I}(h^k_{i-1}|_{Y_{i}}))\longrightarrow 0
\end{equation}
for $i=1, \ldots ,n-1$.

Now we begin our construction. $T_{n-1}$ is the curvature current of the singular metric  $h_{n-2}|_{Y_{n-1}}$ of $L_{n-2}|_{Y_{n-1}}$ over the Riemann surface $Y_{n-1}$. By Lemma \ref{Riemann}, there exists a $k\geq k_0$ and a holomorphic section $s_{n-1} \in H^0(Y_{n-1},\oc_{Y_{n-1}}(kL_{n-2})\otimes \mathcal{I}(h^k_{n-2}|_{Y_{n-1}}))$, such that $\ord_p(s_{n-1})=k\nu(T_{n-1},p)=k\nu_n$.

By the exact sequence (\ref{exact}), $s_{n-1}$ could be extend to
$$
\widetilde{s}_{n-2}\in H^0(Y_{n-2},\oc_{Y_{n-2}}(kL_{n-2})\otimes \mathcal{I}(h^k_{n-2})).
$$
Now we choose a canonical section $t_{n-2}$ of  $H^0(Y_{n-2},\oc_{Y_{n-2}}(Y_{n-1}))$ such that the divisor of $t_{n-2}$ is $Y_{n-1}$. We define $s_{n-2}:=\widetilde{s}_{n-2}{t^{\otimes \nu_{n-1}}_{n-2}}$, by the construction of $h_{n-2}:=h_{n-3}|_{Y_{n-2}}+e^{\nu_{n-1}\eta_{n-2}}$, we obtain that
$$
s_{n-2}\in H^0(Y_{n-2},\oc_{Y_{n-2}}(kL_{n-3})\otimes \mathcal{I}(h^k_{n-3}|_{Y_{n-2}}).
$$
We can continue in this manner to construct a section $s_{0}\in H^0(X,\oc_{X}(kL))$ and by our construction we have  $$
\mu(s_0) =(k\nu_1, \ldots ,k\nu_n)=k\nu(T),
$$
this concludes the theorem.

\end{proof}

\begin{proposition}\it
\label{coincide}
For any big line bundle $L$ and any admissible flag $Y_{\bullet}$, one has $\overline{\Delta_{\mathbb{Q}}(c_1(L))}=\Delta(L)$. In particular,
$$\Delta(L)= \overline{\displaystyle\bigcup_{m=1}^{\infty} \frac{1}{m}\nu(mL)}.$$
\end{proposition}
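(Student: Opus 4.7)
The plan is to establish two inclusions. Theorem \ref{approximation} already delivers $\overline{\Delta_{\mathbb{Q}}(c_1(L))} \subseteq \Delta(L)$: every element of $\Delta_{\mathbb{Q}}(c_1(L))$ lies in some $\frac{1}{m}\mu(mL) \subseteq \Delta(L)$, and $\Delta(L)$ is closed by definition, so taking closures preserves the inclusion. For the reverse inclusion I will prove the stronger statement
\[
\bigcup_{m\geq 1}\tfrac{1}{m}\mu(mL) \subseteq \overline{\Delta_{\mathbb{Q}}(c_1(L))}
\]
by a single perturbation argument, and then invoke the convexity of $\Delta_{\mathbb{Q}}(c_1(L))$ together with the closedness of the left-hand side to pass to the closed convex hull.

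For the perturbation, fix once and for all a K\"ahler current $T_0 \in c_1(L)$ with algebraic singularities; such a $T_0$ exists by applying Demailly's regularization Theorem \ref{Demailly} to any K\"ahler current representing $c_1(L)$ (which exists since $L$ is big). Choose $\omega$ K\"ahler and $\delta>0$ with $T_0 \geq \delta\omega$. Given a non-zero $s \in H^0(X,\mathcal{O}_X(mL))$ with divisor $D_s$, the current $\frac{1}{m}[D_s] \in c_1(L)$ is a closed positive current with algebraic singularities and, by Remark \ref{section divisor}, has valuation vector $\frac{1}{m}\mu(s)$. It fails to be K\"ahler, so for rational $\epsilon \in (0,1)$ I form
\[
T_\epsilon := (1-\epsilon)\tfrac{1}{m}[D_s] + \epsilon T_0 \in c_1(L).
\]
Then $T_\epsilon \geq \epsilon\delta\omega$ is a K\"ahler current, and $T_\epsilon$ has algebraic singularities: after clearing denominators the local potentials $\frac{1-\epsilon}{m}\log|s|^2$ and $c_0 \log\sum |f_i|^2$ (with $c_0 \in \mathbb{Q}_{>0}$) combine into a single rational multiple of $\log$ of a sum of squares modulo a smooth remainder. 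Moreover, the valuation-like function is linear under rational convex combinations of currents with analytic singularities in a fixed class, since each $\nu_i$ is defined as a (generic) Lelong number and Lelong numbers are additive. Hence
\[
\nu(T_\epsilon) = (1-\epsilon)\tfrac{1}{m}\mu(s) + \epsilon\, \nu(T_0),
\]
and letting $\epsilon \to 0$ along rationals places $\frac{1}{m}\mu(s)$ in $\overline{\Delta_{\mathbb{Q}}(c_1(L))}$, as desired.

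To conclude, recall from Section \ref{defin} that $\Delta_{\mathbb{Q}}(c_1(L))$ is stable under rational convex combinations, so its closure in $\mathbb{R}^n$ is a genuine closed convex set; by the previous step it contains $\bigcup_{m\geq 1}\frac{1}{m}\mu(mL)$, hence contains the closed convex hull $\Delta(L)$. Combined with the first inclusion this yields
\[
\Delta(L) \;=\; \overline{\Delta_{\mathbb{Q}}(c_1(L))} \;=\; \overline{\bigcup_{m\geq 1}\tfrac{1}{m}\mu(mL)},
\]
where the last equality uses Theorem \ref{approximation} for one inclusion and the perturbation step for the other; in particular the closed convex hull in the definition of $\Delta(L)$ is redundant. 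The only real subtlety — not a conceptual obstacle but a technical one to verify cleanly — is that the rational convex combination $T_\epsilon$ genuinely sits in the class of K\"ahler currents with \emph{algebraic} (not merely analytic) singularities, and that $\nu$ is additive on such combinations; both follow directly from the explicit local form $T = \frac{c}{2}dd^c\log(\sum|f_i|^2) + dd^c v$ with $c \in \mathbb{Q}_{>0}$.
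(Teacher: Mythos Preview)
Your proof is correct and follows essentially the same approach as the paper. The paper routes the reverse inclusion through $\Delta_{\mathbb{R}}(c_1(L))$ --- observing via Remark~\ref{section divisor} that $\bigcup_m \frac{1}{m}\mu(mL) \subseteq \Delta_{\mathbb{R}}(c_1(L))$ and then invoking Lemma~\ref{closure same} to identify $\overline{\Delta_{\mathbb{R}}}$ with $\overline{\Delta_{\mathbb{Q}}}$ --- whereas you inline the perturbation argument from that lemma and apply it directly to the currents $\frac{1}{m}[D_s]$; since these already have algebraic singularities, your rational convex combination $T_\epsilon$ lands immediately in $\Delta_{\mathbb{Q}}$, saving the extra approximation step inside Lemma~\ref{closure same}.
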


\begin{proof}
Firstly, since $\Delta_{\mathbb{Q}}(c_1(L))$ is a convex set in $Q^n$, its closure $\overline{\Delta_{\mathbb{Q}}(c_1(L))} $ is also a closed convex set in $\mathbb{R}^n$. By Proposition \ref{approximation}, we have
$$
\Delta_{\mathbb{Q}}(c_1(L)) \subset \displaystyle\bigcup_{m=1}^{\infty} \frac{1}{m}\cdot \nu(mL),
$$
thus
$$
\overline{\Delta_{\mathbb{Q}}(c_1(L))}\subseteq \Delta(L).
$$
By Remark \ref{section divisor}, we have $\bigcup_{m=1}^{\infty} \frac{1}{m}\nu(mL) \subseteq \Delta_{\mathbb{R}}(c_1(L))$,
thus by the definition of Okounkov body $\Delta(L)$, we deduce that
$$
\Delta(L)\subseteq \overline{\Delta_{\mathbb{R}}(c_1(L))}.
$$
By Lemma \ref{closure same}, we have $\overline{\Delta_\mathbb{Q}(c_1(L))} = \overline{\Delta_\mathbb{R}(c_1(L))}$, thus the theorem is proved.

\end{proof}

\begin{rem}
By Proposition \ref{coincide}, in the definition of the Okounkov body $\Delta(L)$, it suffices to close up the set of normalized valuation vectors instead of the closure of the convex hull of this set.
\end{rem}

\begin{rem}
It is easy to reprove that the Okounkov body $\Delta(L)$ depends only on the numerical equivalence class of the big line bundle $L$. Indeed, if $L_1$ and $L_2$ are numerically equivalent, we have $c_1(L_1)=c_1(L_2)$ thus
$$
\Delta_{\mathbb{Q}}(c_1(L_1))=\Delta_{\mathbb{Q}}(c_1(L_2)).
$$
By Proposition \ref{coincide}, we have
$$
\Delta(L_1)=\Delta(L_2).
$$
\end{rem}

Now we are ready to find some valuative points in the Okounkov bodies.
\begin{proof}[Proof of Corollary \ref{valuation}]
In \cite{LM09} we know that $\vol_{\mathbb{R}^n}(\Delta(L))=\vol_X(L)>0$ by the bigness of $L$. Since we have $\Delta(L)=\overline{\Delta_{\mathbb{Q}}(c_1(L))}$ by Proposition \ref{coincide}, then for any  $p\in {\rm int}(\Delta(L))\cap\mathbb{Q}^n$, there exists an $n$-simplex $\Delta_n$ containing $p$ with all the vertices lying in $\Delta_{\mathbb{Q}}(c_1(L))$.  Since $\Delta_{\mathbb{Q}}(c_1(L))$ is a  convex set in $\mathbb{Q}^n$, we have $\Delta_n\cap\mathbb{Q}^n\subseteq \Delta_{\mathbb{Q}}(c_1(L))$, and thus
$$
\Delta_{\mathbb{Q}}(c_1(L))\supseteq {\rm int}(\Delta(L))\cap\mathbb{Q}^n.
$$
From Theorem \ref{approximation} we have $\Delta_{\mathbb{Q}}(c_1(L))\subseteq \displaystyle\bigcup_{m=1}^{\infty} \frac{1}{m}\mu(mL)$, thus we get the inclusion
$$
{\rm int}(\Delta(L))\cap\mathbb{Q}^n\subseteq \displaystyle\bigcup_{m=1}^{\infty} \frac{1}{m}\mu(mL),
$$
which means that all rational interior points of $\Delta(L)$ are valuative.
\end{proof}

Pursuing the same philosophy as in Proposition \ref{coincide}, it is natual to extend results related to Okounkov bodies for big line bundles, to the more general case of an arbitrary big class $\alpha\in H^{1,1}(X,\mathbb{R})$. We propose the following definition.

\begin{defin}[Generalized Okounkov body]
\label{generalized Okounkov}
Let $X$ be a K\"ahler manifold of dimension $n$. We define the \emph{generalized Okounkov body} of a big class $\alpha\in H^{1,1}(X,\mathbb{R})$ with respect to the fixed flag $Y_\bullet$ by
$$
\Delta(\alpha)= \overline{\Delta_{\mathbb{R}}(\alpha)}=\overline{\Delta_{\mathbb{Q}}(\alpha)}.
$$
\end{defin}

We have the following properties for generalized Okounkov bodies:
\begin{proposition}\it
\label{body continuity}
Let $\alpha$ and $\beta$ be big classes, $\omega$ be any K\"ahler class. Then:
\begin{enumerate}[\upshape (i)]
\item $
\Delta(\alpha)+\Delta(\beta)\subseteq \Delta(\alpha+\beta).
$
\item $\vol_{\mathbb{R}^n}(\Delta(\omega))>0.$
\item $
\Delta(\alpha)=\bigcap_{\epsilon>0}\Delta(\alpha+\epsilon\omega).
$
\end{enumerate}
\end{proposition}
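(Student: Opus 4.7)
The plan for (i) is to exploit the additivity of Lelong numbers. Given positive currents $T\in\alpha$ and $S\in\beta$ with analytic singularities, their sum $T+S$ is a positive current in $\alpha+\beta$. Generic Lelong numbers are additive along $Y_1$, so $\nu_1(T+S)=\nu_1(T)+\nu_1(S)$, and compatibility of the Siu decomposition with sums gives
\[
((T+S)-\nu_1(T+S)[Y_1])|_{Y_1}=(T-\nu_1(T)[Y_1])|_{Y_1}+(S-\nu_1(S)[Y_1])|_{Y_1},
\]
whence iterating along the flag yields $\nu(T+S)=\nu(T)+\nu(S)$. As $T+S$ need not carry analytic singularities in the strict sense, I would apply Demailly's regularization (Theorem~\ref{Demailly}) to produce $U_k\in\alpha+\beta$ with analytic singularities whose Lelong numbers converge uniformly to those of $T+S$, placing $\nu(T)+\nu(S)=\lim\nu(U_k)$ in $\overline{\Delta_\mathbb{R}(\alpha+\beta)}=\Delta(\alpha+\beta)$.

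For (ii), I would combine the openness of the K\"ahler cone with a local construction near $p$. Since $\omega-\epsilon_1\{Y_1\}$ remains K\"ahler for small $\epsilon_1>0$, a smooth K\"ahler representative $\omega_1$ yields the K\"ahler current $\omega_1+\epsilon_1[Y_1]\in\omega$ with $\nu_1=\epsilon_1$. For higher codimensions, choose admissible coordinates near $p$ with $Y_i=\{z_1=\cdots=z_i=0\}$ and consider the psh potential $\varphi_t(z):=\sum_{i=1}^n t_i\log|z_i|^2$, whose Lelong numbers along the flag are proportional to the $t_i$. Glue $\varphi_t$ via a cutoff $\rho$ supported in the chart and add to a smooth K\"ahler form $\omega_0\in\omega$: for $t=(t_1,\ldots,t_n)$ in a small box in $\mathbb{R}_{>0}^n$ the commutator error from $d\rho$ is bounded and dominated by $\omega_0$, so $\omega_0+dd^c(\rho\varphi_t)$ is a K\"ahler current in $\omega$ with valuation vector proportional to $t$. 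Varying $t$ exhibits an open subset of $\Delta(\omega)$, giving $\vol_{\mathbb{R}^n}(\Delta(\omega))>0$.

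For (iii), the inclusion $\Delta(\alpha)\subseteq\bigcap_{\epsilon>0}\Delta(\alpha+\epsilon\omega)$ is immediate from (i) and (ii): a smooth K\"ahler form in $\epsilon\omega$ has valuation vector $0$, so $0\in\Delta(\epsilon\omega)$ and $\Delta(\alpha)=\Delta(\alpha)+\{0\}\subseteq\Delta(\alpha+\epsilon\omega)$ for every $\epsilon>0$. For the reverse, given $v\in\bigcap_{\epsilon>0}\Delta(\alpha+\epsilon\omega)$, I would pick positive currents $S_\epsilon\in\alpha+\epsilon\omega$ with analytic singularities and $\nu(S_\epsilon)\to v$. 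Subtracting a smooth K\"ahler $\omega_0\in\omega$ gives $T_\epsilon:=S_\epsilon-\epsilon\omega_0\in\alpha$, which is almost positive with $T_\epsilon\geq-\epsilon\omega_0$ and has $\nu(T_\epsilon)=\nu(S_\epsilon)$. Fixing a reference K\"ahler current $\Theta\in\alpha$ with $\Theta\geq\gamma\omega_X$ and analytic singularities, the convex combination
\[
U_\epsilon:=(1-\lambda_\epsilon)T_\epsilon+\lambda_\epsilon\Theta\in\alpha
\]
is positive for $\lambda_\epsilon$ of order $\epsilon$ (by the explicit bounds $T_\epsilon\geq-\epsilon\omega_0$ and $\Theta\geq\gamma\omega_X$), and $\nu(U_\epsilon)=(1-\lambda_\epsilon)\nu(S_\epsilon)+\lambda_\epsilon\nu(\Theta)\to v$; thus $v\in\overline{\Delta_\mathbb{R}(\alpha)}=\Delta(\alpha)$.

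The main obstacle I anticipate is that $U_\epsilon$ in (iii) need not carry analytic singularities in the strict sense of Section~\ref{analytic singularity}, since a convex combination of two currents with different exponents $c$ may not admit a single-exponent log representation. This is overcome by a further application of Demailly's regularization to $U_\epsilon$, yielding currents in $\alpha$ with genuine analytic singularities and Lelong vectors arbitrarily close to $\nu(U_\epsilon)$; combined with $\nu(U_\epsilon)\to v$, this still places $v$ in $\Delta(\alpha)$. A secondary subtlety is the local-to-global matching in (ii), where the cutoff and the smooth K\"ahler form must be balanced so that the global current remains a K\"ahler current while the prescribed local Lelong numbers along the flag are preserved.
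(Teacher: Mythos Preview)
Your treatment of (i) is fine and coincides with the paper's one-line justification.

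There is a genuine gap in your argument for (ii). The first valuation $\nu_1(T)$ is the \emph{generic} Lelong number of $T$ along the global hypersurface $Y_1$, namely $\inf_{x\in Y_1}\nu(T,x)$, not the Lelong number at the flag point $p$. If you glue $\varphi_t=\sum t_i\log|z_i|^2$ by a cutoff $\rho$ supported in one chart, then $\omega_0+dd^c(\rho\varphi_t)$ is smooth on $Y_1$ outside that chart, so $\nu_1=0$, not $t_1$; the same obstruction propagates to every $\nu_i$ with $i\ge 2$ after restriction. Moreover, the commutator terms $\partial\rho\wedge\bar\partial\varphi_t$, $\partial\varphi_t\wedge\bar\partial\rho$ and $\varphi_t\,dd^c\rho$ are \emph{not} bounded: they carry $|z_i|^{-1}$ and $\log|z_i|$ singularities along $\{z_i=0\}$ in the transition annulus, so the resulting current is neither positive nor has analytic singularities in the sense of Section~\ref{analytic singularity}. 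The paper instead proves (ii) by induction on $n$, invoking the Collins--Tosatti extension theorem \cite{CT14} to extend any K\"ahler current on $Y_1$ in the class $(\omega-tY_1)|_{Y_1}$ to $X$; this identifies the slice $\Delta(\omega)\cap(\{t\}\times\mathbb{R}^{n-1})$ with $\{t\}\times\Delta\bigl((\omega-tY_1)|_{Y_1}\bigr)$, and the induction hypothesis finishes. Some global extension input of this kind seems unavoidable, since for $i\ge 2$ the flag member $Y_i$ is not a hypersurface in $X$.

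Your approach to (iii) is different from the paper's and essentially works, but one step is unjustified as written. You invoke Demailly's regularization to say that the approximants to $U_\epsilon$ have ``Lelong vectors arbitrarily close to $\nu(U_\epsilon)$''; however Theorem~\ref{Demailly} only controls the pointwise Lelong numbers $\nu(T_k,x)$ on $X$, hence $\nu_1$, and says nothing about the Lelong numbers of the successive restrictions $(T_k-\nu_1(T_k)[Y_1])|_{Y_1}$ to $Y_1,Y_2,\ldots$. The clean fix is to avoid regularization: take $\Theta$ and each $S_\epsilon$ with \emph{algebraic} singularities (via Lemma~\ref{closure same}) and choose $\lambda_\epsilon\in\mathbb{Q}$, so that $U_\epsilon$ itself has algebraic singularities and $\nu(U_\epsilon)\in\Delta_{\mathbb Q}(\alpha)$ directly---this is exactly the device the paper uses when showing $\Delta_{\mathbb Q}(\alpha)$ is convex. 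For comparison, the paper proves (iii) by an indirect volume argument: $t\mapsto\vol_{\mathbb{R}^n}(\Delta(\alpha+t\omega))$ is concave by (i), hence continuous at $t=0$ (since $\alpha$ is big, $t$ ranges over an open interval containing $0$), so $\bigcap_{\epsilon>0}\Delta(\alpha+\epsilon\omega)$ and $\Delta(\alpha)$ are nested closed convex bodies of the same positive volume and therefore coincide.
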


\begin{proof}
(i) is obvious from the definition of generalized Okounkov body. To prove (ii), we use induction for dimension. The result is obvious if $n=1$, assume now that (ii) is true for $n-1$. We choose $t>0$ small enough such that $\omega-tY_1$ is still a K\"ahler class. By the main theorem of \cite{CT14}, any K\"ahler current $T\in (\omega-tY_1)|_{Y_1}$ with analytic singularities can be extended to a K\"ahler current $\widetilde{T}\in \omega-tY_1$, thus we have 
$$
\Delta(\omega)\bigcap t\times{\mathbb{R}}^{n-1}=t\times \Delta((\omega-tY_1)|_{Y_1}),
$$
where $\Delta((\omega-tY_1)|_{Y_1})$ is the generalized Okounkov body of $(\omega-tY_1)|_{Y_1}$ with respect to the flag
$$
Y_1\supset Y_2\supset \ldots \supset Y_n=\{p\}.
$$
By the induction, we have $\vol_{\mathbb{R}^{n-1}}(\Delta((\omega-tY_1)|_{Y_1}))>0$. Since $\Delta(\omega)$ contains the origin, we have $\vol_{\mathbb{R}^n}(\Delta(\omega))>0.$

Now we are ready to prove (iii). By the concavity we have
$$\Delta(\alpha+\epsilon_1\omega)+\Delta((\epsilon_2-\epsilon_1)\omega)\subseteq \Delta(\alpha+\epsilon_2\omega)$$ 
if $0\leq \epsilon_1< \epsilon_2$. Since $\Delta(\omega)$ contains the origin, we have 
$$
\Delta(\alpha)\subseteq\bigcap_{\epsilon>0}\Delta(\alpha+\epsilon\omega),
$$
and 
$$
\Delta(\alpha+\epsilon_1\omega)\subseteq \Delta(\alpha+\epsilon_2\omega).
$$

From the concavity property, we conclude that  $\vol_{\mathbb{R}^n}(\Delta(\alpha+t\omega))$ is a concave function for $t\geq 0$, thus continuous. Then we have 
$$
\vol_{\mathbb{R}^n}(\bigcap_{\epsilon>0}\Delta(\alpha+\epsilon\omega))=\vol_{\mathbb{R}^n}(\Delta(\alpha))>0.
$$
Since they are all closed and convex, we have 
$$
\Delta(\alpha)=\bigcap_{\epsilon>0}\Delta(\alpha+\epsilon\omega).
$$
\end{proof}

\begin{rem}
We don't know whether $\vol_{\mathbb{R}^n}(\Delta(\alpha))$ is independent of  the choice of the admissible flag. However, in the next subsection we will prove that in the case of surfaces we have 
$$
\vol_X(\alpha)=2\vol_{\mathbb{R}^2}(\Delta((\alpha)),
$$
in particular the Euclidean volume of the generalized Okounkov body is independent of the choice of the flag. We conjecture that 
$$
\vol_{\mathbb{R}^n}(\Delta(\alpha))=\frac{1}{n!}\cdot \vol_X(\alpha),
$$
as we proposed in the introduction.
\end{rem}

\subsection{Generalized Okounkov bodies on complex surfaces}
Now we will mainly focus on generalized Okounkov bodies of compact K\"ahler surfaces. In this section, $X$ denotes a compact K\"ahler surface. We fix henceforth an admissible flag
$$
X\supseteq C\supseteq \{x\},
$$
on $X$, where $C\subset X$ is an irreducible curve and $x\in C$ is a smooth point.

\

\begin{defin}
\label{restrict dfn}
For any big class $\alpha\in H^{1,1}(X,\mathbb{R})$, we denote the \emph{restricted $\mathbb{R}$-convex body} of $\alpha$ along $C$ by $\Delta_{\mathbb{R},X|C}(\alpha)$, which is defined to be the set of Lelong numbers $\nu(T|_C,x)$, where $T\in \alpha$ ranges among all the positive currents with analytic singularities such that $C\not\subseteq E_+(T)$.
The \emph{restricted Okounkov body} of $\alpha$ along $C$ is defined as
$$\Delta_{X\mid C}(\alpha):=\overline{\Delta_{\mathbb{R},X|C}(\alpha)}.$$
\end{defin}

When $\alpha=c_1(L)$ for some big line bunle $L$ on X, it is noticeable that $\Delta_{X|C}(\alpha)=\Delta_{X|C}(L)$, where $\Delta_{X|C}(L)$ is defined in \cite{LM09}. When $L$ is ample, we have $\Delta_{X|C}(L)=\Delta(L|_C)$. Indeed, it is suffice to show that for any section $s\in H^0(C,\oc_C(L))$, there exists an integer $m$ such that $s^{\otimes m}$ can be extended to a section $S_m\in H^0(X,\oc_X(mL))$.  This can be garanteed by Kodaira vanishing theorem. When $\alpha$ is any ample class, there is a very similar theorem which has appeared in the proof of Proposition \ref{body continuity}. However, the proof there relies on the difficult extension theorem in \cite{CT14}. Here we give a simple and direct proof when $X$ is a complex surface. Anyway, the idea of proof here is borrowed from \cite{CT14}.

\begin{proposition}\it
\label{ample}
If $\alpha$ is an ample class, then we have
$$
\Delta_{X| C}(\alpha)=\Delta(\alpha|_C)=[0,\alpha\cdot C].
$$
\end{proposition}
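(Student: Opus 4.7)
The equality $\Delta(\alpha|_C)=[0,\alpha\cdot C]$ is immediate from Example~\ref{curve} applied to $\alpha|_C$, which is an ample class of degree $\alpha\cdot C$ on the compact Riemann surface $C$. The substance of the proposition is therefore the equality $\Delta_{X\mid C}(\alpha) = \Delta(\alpha|_C)$, which I will establish as two inclusions.

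For the inclusion $\Delta_{X\mid C}(\alpha)\subseteq\Delta(\alpha|_C)$: given any positive current $T\in\alpha$ with analytic singularities such that $C\not\subseteq E_+(T)$, the restriction $T|_C$ is a well-defined positive current on $C$ in the class $\alpha|_C$ with analytic singularities, by the discussion in Section~\ref{analytic singularity}. Hence $\nu(T|_C,x)\in\Delta_{\mathbb{R}}(\alpha|_C)$, and passing to closures yields the inclusion.

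For the reverse inclusion, the plan is to prove an extension result adapted to surfaces: every positive current $S\in\alpha|_C$ with analytic singularities extends to a positive current $T\in\alpha$ on $X$ with analytic singularities, satisfying $C\not\subseteq E_+(T)$ and $T|_C = S$. This realizes $\nu(S,x)$ as $\nu(T|_C,x)\in\Delta_{\mathbb{R},X\mid C}(\alpha)$, and closures then give $\Delta(\alpha|_C)\subseteq\Delta_{X\mid C}(\alpha)$. The construction proceeds as follows: fix $\omega\in\alpha$ K\"ahler and write $S = \omega|_C + dd^c\psi$ with $\psi$ quasi-psh on $C$ having analytic singularities; cover a tubular neighborhood $V$ of $C$ by finitely many coordinate charts $\{U_i\}$ in which $C\cap U_i = \{z_2^{(i)}=0\}$; on each chart define a transverse extension $\tilde\psi_i(z_1^{(i)},z_2^{(i)}) := \psi(z_1^{(i)})$, which inherits analytic singularities along local divisors transverse to $C$ at the poles of $\psi$; glue the $\tilde\psi_i$ by a partition of unity $\{\rho_i\}$ on $V$ together with a smooth cut-off supported in $V$ to produce a global quasi-psh function $\tilde\psi$ on $X$. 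Because all $\tilde\psi_i$ agree on $C$, the partition-of-unity cross-terms vanish there, and one has $\tilde\psi|_C = \psi$.

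The main obstacle is the positivity of the candidate $T_0 := \omega + dd^c\tilde\psi\in\alpha$: the gluing introduces smooth error terms in the transition region (where the cut-off and the $\rho_i$ have nontrivial derivatives) that are uniformly bounded below but may be negative. Since $X$ is compact and $\alpha$ is K\"ahler, these smooth errors can be absorbed into $\omega$ via a Richberg-type modification of $\tilde\psi$, supported away from $C$ and chosen so as to leave $\tilde\psi|_C = \psi$ unchanged. The modified current $T$ is then positive, lies in $\alpha$, has analytic singularities, satisfies $C\not\subseteq E_+(T)$, and restricts to $S$ along $C$. This positivity correction---borrowed in spirit from the extension theorem of \cite{CT14}, but executed directly thanks to the codimension-one nature of $C\subset X$---is the key technical step of the plan.
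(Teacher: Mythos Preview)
Your overall strategy matches the paper's: the easy inclusion is by restriction, and for the hard one you extend a positive current $S\in\alpha|_C$ with analytic singularities to one in $\alpha$ on $X$ restricting to $S$. The execution you sketch, however, has genuine gaps. Gluing the naive transverse extensions $\tilde\psi_i(z_1,z_2)=\psi(z_1)$ by a partition of unity does \emph{not} produce a quasi-psh function: $\sum\rho_i\tilde\psi_i$ is quasi-psh only when the $\tilde\psi_i$ agree up to $O(1)$ on overlaps, which fails here since they agree only along $C$. Near a pole of $\psi$ the cross-terms $d\rho_i\wedge d^c\tilde\psi_i$ involve $\partial\log|z_1|$ and are not bounded below, so the error is not ``smooth and uniformly bounded'' as you claim. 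Moreover the naive extension $\psi(z_1)$ is singular along the \emph{divisor} $\{z_1=0\}$; your cut-off $\chi$ supported near $C$ must vanish somewhere on that divisor, and $\chi\cdot\log|z_1|^2$ has neither analytic singularities nor a controllable $dd^c$ there. Finally, invoking a ``Richberg-type modification supported away from $C$'' to repair positivity is too vague: Richberg's procedure glues local psh data via regularized maxima, it does not restore plurisubharmonicity once it has been lost.

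The paper avoids all three problems. Near each pole it uses $\frac{c_j}{2}\log(|z_1|^2+|z_2|^2)+g_j(z_1)+A|z_2|^2$ instead of your $\psi(z_1)$, so the singularity becomes an isolated point on $C$ rather than a transverse divisor; it arranges the cover so each pole lies in a unique chart and then glues on a neighborhood $U$ of $C$ by Richberg's regularized-max technique, which \emph{does} preserve the psh property. To pass from $U$ to all of $X$ it uses the key device you are missing: ampleness of $\alpha-\delta\{C\}$ for small $\delta>0$ furnishes a global potential $\phi$ with $\omega+dd^c\phi\geq 0$ and $\phi=\frac{\delta}{2}\log|z_2|^2+O(1)$ along $C$; then $\Phi:=\max\{\tilde\varphi,\phi+B\}$ for large $B$ is a global $\omega$-psh function with analytic singularities that agrees with $\varphi$ on $C$. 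This max-with-$\phi$ step is the correct replacement for your cut-off.
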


\begin{proof}
From Definition \ref{restrict dfn}, we have $\Delta_{X| C}(\alpha)\subseteq \Delta(\alpha|_C)$. It suffices to prove that for any K\"ahler current $T\in \alpha|_C$ with mild analytic singularities, we have a positive current $\widetilde{T}\in \alpha$ with analytic singularites such that $\widetilde{T}|_C=T$. First we choose a K\"ahler form $\omega\in\alpha$. By assumption, we can write $T=\omega|_V + dd^c\varphi$ for some quasi-plurisubharmonic function $\varphi$ on $C$ which has mild analytic singularities. Our goal is to extend  $\varphi$ to a function $\Phi$ on $X$ such that $\omega + dd^c\Phi$ is a K\"ahler current with analytic singularities.

Choose $\epsilon>0$ small enough so that $$T=\omega|_C + idd^c\varphi\geq3\epsilon\omega,$$
holds as currents on $C$. We can cover $C$ by finitely many charts $ \{W_j\}_{1\leq j\leq N}$ satisfying the following properties:
\begin{enumerate}[\upshape (i)]
\item On each $W_j(j\leq k)$ there are local coordinates $(z^{(j)}_1,z^{(j)}_2)$ such that $C\bigcap W_j=\{z^{(j)}_2=0\}$ and $$\varphi=\frac{c_j}{2}\log |z^{(j)}_1|^2+g_j(z^{(j)}_1)$$
where $g_j(z^{(j)}_1)$ is smooth and bounded on $W_j\bigcap C$. We denote the single pole of $T$ in $W_j(j\leq k)$ by $x_j$;
\item On each $W_j(j>k)$ the local potential $\varphi$ is smooth and bounded on $W_j\bigcap C$;
\item $x_i\not\in\overline{W_j}$ for $i=1,\ldots,k$ and $j\neq i$.
\end{enumerate}
Define a function $\varphi_j$ on $W_j$ (with analytic singularities) by
\begin{equation}
\varphi_j(z^{(j)}_1,z^{(j)}_2)=\left\{
\begin{aligned}
&\varphi(z^{(j)}_1)+A|z^{(j)}_2|^2 \quad &\text{if} \quad j>k,\\
&\frac{c_j}{2}\log (|z^{(j)}_1|^2+|z^{(j)}_2|^2)+g_j(z^{(j)}_1)+A|z^{(j)}_2|^2 \quad &\text{if} \quad j\leq k,
\end{aligned}
\right.\nonumber
\end{equation}
where $A>0$ is a constant.  If we shrink the $W_j$'s slightly, still preserving the property that $C\subseteq \bigcup W_j$, we can choose $A$ sufficiently large so that
$$
\omega+dd^c\varphi_j\geq2\epsilon\omega,
$$
holds on $W_j$ for all $j$. We also need to construct slightly smaller open sets $W'_j\subset \subset U_j \subset \subset W_j $ such that $\bigcup W'_j$ is still a covering of $C$.

By construction $\varphi_j$ is smooth when $j>k$, and $\varphi_j$ is smooth outside the log pole $x_j$ when $j\leq k$. By property (iii) above, we can glue the functions $\varphi_j$ together to produce a K\"ahler current
$$\widetilde{T}=\omega|_U+dd^c\widetilde{\varphi}\geq \epsilon\omega$$
defined in a neighborhood $U$ of $C$ in $X$, thanks to Richberg's gluing procedure. Indeed, $\varphi_i$ is smooth on $W_i\bigcap W_j$ for any $j\neq i$, which is a sufficient condition in using the Richberg technique.   From the construction of $\widetilde{\varphi}$, we know that  $\widetilde{\varphi}|_C=\varphi$, $\widetilde{\varphi}$ has log poles in every $x_i$ and is continuous outside $x_1,\ldots,x_k$.

On the other hand, since $\alpha$ is an ample class, there exists a rational number $\delta>0$ such that $\alpha-\delta\{C\}$ is still ample, thus we have a K\"ahler form $\omega_1\in \alpha-\delta\{C\}$. We can write $\omega_1+\delta[C]=\omega+dd^c\phi$, where $\phi$ is smooth outside $C$, and for any point $x\in C$, we have
$$
\phi=\frac{\delta}{2}\log |z_2|^2+O(1),
$$
where $z_2$ is the local equation of $C$.

Since $\phi$ is continuous outside $C$, we can choose a large constant $B>0$ such that $\phi>\widetilde{\varphi}-B$ in a neighborhood of $\partial U$. Therefore we define

$$\Phi=\begin{cases}
\max\{\widetilde{\varphi},\phi+B\}\ &\text{on}\ U\\
\phi+B &\text{on}\ X-U,
\end{cases}$$
which is well defined on the whole of $X$, and satisfies $\omega+dd^c\Phi\geq \epsilon'\omega$ for some $\epsilon'>0$. Since $\phi=-\infty$ on $C$, while $\widetilde{\varphi}|_C=\varphi$, it follows that $\Phi|_C=\varphi$.

We claim that $\Phi$ also has analytic singularities. Since around $x_j$, we have
$$
\widetilde{\varphi}(z_1,z_2)=\frac{c_j}{2}\log (|z_1|^2+|z_2|^2)+O(1),
$$
and
$$
\phi(z_1,z_2)=\frac{\delta}{2}\log |z_2|^2+O(1),
$$
for some local coordinates $(z_1,z_2)$ of $x_j$. Thus locally we have
$$
\max\{\widetilde{\varphi},\phi+A\}=\frac{1}{2}\log (|z_1|^{2c_j}+|z_2|^{2c_j}+|z_2|^{2\delta})+O(1).
$$
Since $\Phi$ is continuous outside $x_1,\ldots,x_k$, our claim is proved.

\end{proof}

\begin{lem}\it
\label{big nef}
Let $\alpha$ be a big and nef class on $X$, then for any $\epsilon>0$, there exsists a K\"ahler current $T_\epsilon\in \alpha$ with analytic singularities such that the Lelong number $\nu(T_\epsilon,x)<\epsilon$ for any point in $X$. Moreover, $T_\epsilon$ also satisfies
$$
E_+(T)=E_{nK}(\alpha).
$$
\end{lem}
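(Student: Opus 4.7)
The plan is to build $T_\epsilon$ as a convex combination $(1-s)\theta_\delta+sR$, where $\theta_\delta$ is a smooth representative of $\alpha$ produced by nefness and $R\in\alpha$ is a K\"ahler current with analytic singularities produced by bigness whose $E_+$ already realises $E_{nK}(\alpha)$. Choosing $s$ and $\delta$ small in the right balance will give a K\"ahler current with analytic singularities, arbitrarily small Lelong numbers, and with the desired $E_+$. Fix once and for all a K\"ahler form $\omega_0$ on $X$.

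First I will use the nefness of $\alpha$ to produce, for each $\delta>0$, a smooth form $\theta_\delta\in\alpha$ with $\theta_\delta\geq-\delta\omega_0$. Next I will use the bigness of $\alpha$ together with Demailly's regularization (Theorem \ref{Demailly}) and the definition $E_{nK}(\alpha)=\bigcap_T E_+(T)$ to produce a K\"ahler current $R\in\alpha$ with analytic singularities such that $E_+(R)=E_{nK}(\alpha)$ and $R\geq c_0\omega_0$ for some $c_0>0$. The construction of $R$ proceeds by noting that each $E_+(T_i)$ arising from a K\"ahler current $T_i$ with analytic singularities is an analytic subset of the compact surface $X$, so Noetherianity reduces $E_{nK}(\alpha)$ to a finite intersection $E_+(T_1)\cap\cdots\cap E_+(T_N)$; the $T_i$ are then merged into a single current by taking the max of suitably normalised local potentials, which yields $\nu(R,x)=\min_i\nu(T_i,x)$ and preserves the uniform lower bound $R\geq c_0\omega_0$. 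Since $R$ has analytic singularities on the compact manifold $X$, $M:=\sup_{x\in X}\nu(R,x)<\infty$.

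Then I set
$$T_\epsilon:=(1-s)\theta_\delta+sR\in\alpha,$$
which satisfies $T_\epsilon\geq\bigl(sc_0-(1-s)\delta\bigr)\omega_0$, so is a K\"ahler current as soon as $\delta<sc_0/(1-s)$. Adding the smooth form $(1-s)\theta_\delta$ to $sR$ preserves analytic singularities, and one has $\nu(T_\epsilon,x)=s\nu(R,x)\leq sM$ together with $E_+(T_\epsilon)=E_+(R)=E_{nK}(\alpha)$. Choosing first $s<\epsilon/M$ and then $\delta$ correspondingly small yields $\nu(T_\epsilon,x)<\epsilon$ for every $x\in X$, as required.

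The main obstacle is the production of $R$: Noetherianity hands us a finite intersection realising $E_{nK}(\alpha)$, but merging finitely many K\"ahler currents with analytic singularities into a single one whose $E_+$ equals the intersection requires care, because the max of potentials with different exponents need not itself have analytic singularities in the strict sense. I expect to handle this either by rescaling to a common rational exponent before taking the max, or, more geometrically in the surface case, by combining Theorem \ref{Tosatti} (which identifies $E_{nK}(\alpha)$ with the finite exceptional family of null curves of $\alpha$) with a direct construction of a K\"ahler current in $\alpha$ whose analytic singularities are supported exactly on this union.
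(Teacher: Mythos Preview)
Your approach is essentially the same as the paper's: both form a convex combination of a smooth almost-positive representative of $\alpha$ (from nefness) with a fixed K\"ahler current $T_0\in\alpha$ having analytic singularities and $E_+(T_0)=E_{nK}(\alpha)$ (from bigness), then shrink the weight on $T_0$ to make all Lelong numbers small while keeping $E_+$ unchanged. The paper simply takes the existence of such a $T_0$ as a known fact rather than constructing it, so the ``main obstacle'' you anticipate with the max of potentials does not arise in the paper's argument.
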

\begin{proof}
Since $\alpha$ is big, there exists a K\"ahler current with analytic singularities such that $E_+(T_0)=E_{nK}(\alpha)$ and $T_0>\omega$ for some K\"ahler form $\omega$. Since $\alpha$ is also a nef class, for any $\delta>0$, there exists a smooth form $\theta_\delta$ such that $\theta_\delta\geq -\delta\omega$. Thus $T_\delta:=\delta T_0+(1-\delta)\theta_\delta\geq \delta^2\omega$ is a K\"ahler current with analytic singularities satisfying that $$E_+(T_\delta)=E_+(T_0)=E_{nK}(\alpha),$$
and
$$\nu(T_\delta,x)=\delta\nu(T_0,x)$$
for $x\in X$. Since the Lelong number $\nu(T_0,x)$ is an upper continuous function (thus bounded from above), $\nu(T_\delta,x)$  converges uniformly to zero as $\delta$ tends to 0. The lemma is proved.
\end{proof}

\begin{proposition}\it
\label{restrict volume}
Let $\alpha$ be a big and nef class, $C\not\subseteq E_{nK}(\alpha)$. Then we have $$
\Delta_{X| C}(\alpha)=\Delta(\alpha|_C)=[0,\alpha\cdot C].
$$
\end{proposition}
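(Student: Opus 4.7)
The plan is to prove the two inclusions $\Delta_{X|C}(\alpha)\subseteq[0,\alpha\cdot C]$ and $[0,\alpha\cdot C]\subseteq\Delta_{X|C}(\alpha)$ separately; the equality $\Delta(\alpha|_C)=[0,\alpha\cdot C]$ is the curve case, which applies because $C\not\subseteq E_{nK}(\alpha)$ forces $\alpha\cdot C>0$ by Theorem~\ref{Tosatti}, making $\alpha|_C$ a positive-degree class on the smooth Riemann surface $C$ (cf.\ Example~\ref{curve}). For the upper bound I would fix a K\"ahler form $\omega_0$ and observe that $T\mapsto T+\epsilon\omega_0$ sends a positive current with analytic singularities in $\alpha$ to one in $\alpha+\epsilon\{\omega_0\}$ without changing $E_+$ or the Lelong number of the restriction at $x$, so that $\Delta_{X|C}(\alpha)\subseteq\Delta_{X|C}(\alpha+\epsilon\{\omega_0\})$; Proposition~\ref{ample} applied to the K\"ahler class $\alpha+\epsilon\{\omega_0\}$, together with intersection over $\epsilon>0$, then yields $\Delta_{X|C}(\alpha)\subseteq[0,\alpha\cdot C]$.

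For the lower bound, given $t\in[0,\alpha\cdot C)$, I would adapt the two-step construction of Proposition~\ref{ample}. Since $\alpha|_C-t\{x\}$ still has positive degree, one can write $T_C=t[x]+\omega_C$ with $\omega_C$ a smooth K\"ahler form in this class, producing a K\"ahler current $T_C\in \alpha|_C$ with mild analytic singularities and $\nu(T_C,x)=t$. The first step, a local extension of $T_C$ to a tubular neighborhood $U$ of $C$ by Richberg's gluing, carries over verbatim and yields a K\"ahler current $\widetilde T=\omega_0+dd^c\widetilde\varphi$ on $U$ with $\widetilde T|_C=T_C$. The second step of Proposition~\ref{ample} used a K\"ahler form in the ample class $\alpha-\delta\{C\}$; in our setting $\alpha$ is only big and nef, but bigness is an open condition, so $\alpha-\delta\{C\}$ remains big for all sufficiently small $\delta>0$ and contains K\"ahler currents $\omega_1'=\omega_0+dd^c\phi'$ with analytic singularities by Theorem~\ref{Demailly}. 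The gluing $\Phi=\max\{\widetilde\varphi,\phi'+B\}$ on $U$ and $\Phi=\phi'+B$ on $X\setminus U$, for $B$ sufficiently large, then produces a global K\"ahler current $\omega_0+dd^c\Phi\in\alpha$ with analytic singularities and $\Phi|_C=\varphi$, so that the corresponding positive current has restriction Lelong number exactly $t$ at $x$.

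The main obstacle I anticipate is controlling the singular locus of $\omega_1'$: for the max construction to glue correctly, $\phi'$ must be finite and continuous in a neighborhood of $\partial U$, which forces $C$ to lie outside the singular locus of $\omega_1'$. To arrange this I would start from the K\"ahler current $R\in\alpha$ provided by Lemma~\ref{big nef}, whose $E_+(R)=E_{nK}(\alpha)$ does not contain $C$, then subtract $\delta\theta_C$ for a smooth representative $\theta_C$ of $\{C\}$ (the result remains a K\"ahler current for $\delta$ small because $R$ dominates a multiple of $\omega_0$), and regularize via Theorem~\ref{Demailly} to obtain $\omega_1'\in\alpha-\delta\{C\}$ with analytic singularities and $E_+(\omega_1')\subseteq E_{nK}(\alpha)\not\supseteq C$. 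Shrinking $U$ so that $\partial U$ avoids this algebraic singular subset then validates the max construction and closes the argument.
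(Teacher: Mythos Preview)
Your upper bound is fine, though the paper obtains it more directly: if $T\in\alpha$ is a positive current with analytic singularities and $C\not\subseteq E_+(T)$, then $T|_C\in\alpha|_C$ is a positive current with analytic singularities on the curve $C$, so $\nu(T|_C,x)\in\Delta_{\mathbb{R}}(\alpha|_C)$; hence $\Delta_{X|C}(\alpha)\subseteq\Delta(\alpha|_C)=[0,\alpha\cdot C]$ immediately.

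The lower bound, however, has a genuine gap. You correctly arrange $E_+(\omega_1')\subseteq E_{nK}(\alpha)$, but on a surface $E_{nK}(\alpha)=\text{Null}(\alpha)$ is a finite union of irreducible \emph{curves} $C_1,\ldots,C_r$ (Theorem~\ref{Tosatti}), not isolated points. In fact any K\"ahler current in $\alpha-\delta\{C\}$ must be singular along every $C_i$: if $T_k\in\alpha-\delta\{C\}$ is a K\"ahler current then $T_k+\delta[C]\in\alpha$ is one too, so $E_+(T_k)\cup C\supseteq E_{nK}(\alpha)$, and since $C\neq C_i$ this forces $C_i\subseteq E_+(T_k)$. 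Now nothing prevents some $C_i$ from meeting $C$; when it does, any tubular neighbourhood $U$ of $C$ satisfies $C_i\cap\partial U\neq\emptyset$ (an irreducible curve distinct from $C$ cannot be contained in an arbitrarily thin tube around $C$). Your potential $\phi'$ is $-\infty$ along $C_i$, so $\phi'+B\geq\widetilde\varphi$ fails near $\partial U\cap C_i$ no matter how large $B$ is, and the max does not patch. ``Shrinking $U$'' only makes this worse.

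The paper avoids this obstacle by a different reduction. Using Lemma~\ref{big nef}, take $T_\epsilon\in\alpha$ a K\"ahler current with analytic singularities, $E_+(T_\epsilon)=E_{nK}(\alpha)$ and all Lelong numbers $<\epsilon$. Its Siu decomposition reads $T_\epsilon=R_\epsilon+\sum_i a_{i,\epsilon}[C_i]$ with $0\leq a_{i,\epsilon}<\epsilon$; the residual $R_\epsilon$ is a K\"ahler current whose analytic singularities lie in codimension $\geq 2$, i.e.\ are isolated points, so by Remark~\ref{mf=f} the class $\{R_\epsilon\}$ is genuinely \emph{K\"ahler} and converges to $\alpha$ as $\epsilon\to 0$. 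Proposition~\ref{ample} then applies to $\{R_\epsilon\}$ as a black box, producing $S_\epsilon\in\{R_\epsilon\}$ with $\nu(S_\epsilon|_C,x)$ close to $\{R_\epsilon\}\cdot C$; adding back $\sum_i a_{i,\epsilon}[C_i]$ gives a K\"ahler current in $\alpha$ with restricted Lelong number close to $\alpha\cdot C$. The point is that the divisorial part of the singularity is stripped off \emph{before} the extension, rather than carried through it.
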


\begin{proof}
Asumme $E_{nK}(\alpha)=\bigcup_{i=1}^{r}C_i$, where  each $C_i$ is an irreducible curve. By Lemma \ref{big nef}, for any $\epsilon>0$ there exists a K\"ahler current $T_\epsilon\in \alpha$ with analytic singularities such that  $$E_+(T_\epsilon)=E_{nK}(\alpha)=\text{\rm Null}(\alpha)=\bigcup_{i=1}^{r}C_i$$
and $\nu(T_\epsilon,x)<\epsilon$ for all $x\in X$. Thus the Siu decomposition
$$T_\epsilon=R_\epsilon+\displaystyle\sum_{i=1}^{r}a_{i,\epsilon}C_i$$
satisfies $0\leq a_{i,\epsilon}<\epsilon$, and $R_\epsilon$ is a K\"ahler current whose analytic singularities are isolated points. By Remark \ref{mf=f}, the cohomology class $\{R_\epsilon\}$ is a K\"ahler class and converges to $\alpha$ as $\epsilon\rightarrow 0$. In particular, $|\{R_\epsilon\}\cdot C-\alpha\cdot C|<A\epsilon$, where $A$ is a constant.

By Proposition \ref{ample}, there exists a K\"ahler current $S_\epsilon\in \{R_\epsilon\}$  with analytic singularities such that $C\not\subseteq E_+(S_\epsilon)$ and $-\epsilon<\nu(S_\epsilon|_C,x)-\{R_\epsilon\}\cdot C<0$. Thus  $T'_\epsilon:=S_\epsilon+\sum_{i=1}^{r}a_{i,\epsilon}C_i$ is a K\"ahler current in $\alpha$ with analytic singularities, and $-(1+A)\epsilon<\nu(T'_\epsilon|_C,x)-\alpha\cdot C$. Since $\alpha$ is big and nef, there exists a K\"ahler current $P_\epsilon$ in $\alpha$ with analytic singularities such that $\nu(P_\epsilon|_C,x)<\epsilon$. Therefore, by the definition of $\Delta_{X| C}(\alpha)$ and the convexity property we deduce that $[0,\alpha\cdot C]\subseteq \Delta_{X| C}(\alpha)$. On the other hand, $\Delta_{X| C}(\alpha)\subseteq \Delta(\alpha|_C)=[0,\alpha\cdot C]$ by definition. The proposition is proved.

\end{proof}

\begin{lem}\it
\label{restrict body}
Let $\alpha$ be a big class on $X$ with divisorial Zariski decomposition $\alpha=Z(\alpha)+N(\alpha)$. Assume
that $C\not\subseteq E_{nK}(Z(\alpha))$, so that $C\not\subseteq {\rm Supp}(N(\alpha))$ by Theorem \ref{contain exceptional}. Moreover, set
$$f(\alpha)=\nu_x(N(\alpha)|_C),\ \ \  g(\alpha)=\nu_x(N(\alpha)|_C)+Z(\alpha)\cdot C,$$
where $\nu_x(N(\alpha)|_C)=\nu(N(\alpha)|_C,x)$.
Then the restricted Okounkov body of $\alpha$ along $C$ is the interval
$$
\Delta_{X|C}(\alpha)=[f(\alpha),g(\alpha)]
$$

\end{lem}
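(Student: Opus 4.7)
The plan is to reduce the statement to the big-and-nef case already established in Proposition \ref{restrict volume} by peeling off the negative part of the divisorial Zariski decomposition. Because $C\not\subseteq E_{nK}(Z(\alpha))$, Theorem \ref{contain exceptional} ensures that $C$ is not a component of the support of $N(\alpha)$, so the restriction $N(\alpha)|_C$ is a well-defined effective divisor on $C$ and $\nu_x(N(\alpha)|_C)$ makes sense.

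The core step is to set up a correspondence between the positive currents admissible in the two restricted Okounkov bodies $\Delta_{X|C}(\alpha)$ and $\Delta_{X|C}(Z(\alpha))$. Given any positive current $T\in\alpha$ with analytic singularities satisfying $C\not\subseteq E_+(T)$, Remark \ref{bijection} yields $T\geq N(\alpha)$, so $S:=T-N(\alpha)$ is a positive current in $Z(\alpha)$; writing local potentials and using Poincar\'e--Lelong on the defining equations of the prime components of $N(\alpha)$ shows that $S$ inherits analytic singularities from $T$. Moreover, by additivity of Lelong numbers $\nu(S,y)=\nu(T,y)-\nu(N(\alpha),y)$, and both terms vanish at a generic $y\in C$ (using $C\not\subseteq E_+(T)$ together with $C\not\subseteq{\rm Supp}(N(\alpha))$), so $C\not\subseteq E_+(S)$. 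The inverse map $S\mapsto S+N(\alpha)$ clearly preserves all these properties.

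Next, restricting to $C$, the decomposition $T|_C=S|_C+N(\alpha)|_C$ as a sum of positive currents on $C$ together with the additivity of Lelong numbers gives
$$\nu(T|_C,x)=\nu(S|_C,x)+\nu_x(N(\alpha)|_C).$$
Ranging over all admissible $T$ (equivalently $S$) and taking closures, I obtain
$$\Delta_{X|C}(\alpha)=\nu_x(N(\alpha)|_C)+\Delta_{X|C}(Z(\alpha)).$$
By Theorem \ref{modified big}, $Z(\alpha)$ is big and modified nef, hence big and nef on the surface by Remark \ref{nef property}; combined with $C\not\subseteq E_{nK}(Z(\alpha))$, Proposition \ref{restrict volume} gives $\Delta_{X|C}(Z(\alpha))=[0,Z(\alpha)\cdot C]$, and the desired equality $\Delta_{X|C}(\alpha)=[f(\alpha),g(\alpha)]$ follows at once.

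The main obstacle is the careful bookkeeping along the shift by $N(\alpha)$: both the preservation of analytic singularities and the non-inclusion of $C$ in $E_+$ (the two conditions built into the definition of restricted Okounkov body) must be checked for $T\leftrightarrow S$, which requires working locally with potentials rather than just with cohomology classes.
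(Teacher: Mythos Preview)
Your argument is correct and follows essentially the same route as the paper: shift by $N(\alpha)$ via the bijection of Remark \ref{bijection} to reduce to $Z(\alpha)$, then invoke Proposition \ref{restrict volume}. If anything, you are more careful than the paper in verifying that the shift preserves the two conditions (analytic singularities and $C\not\subseteq E_+$) built into Definition \ref{restrict dfn}, which the paper leaves implicit.
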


\begin{proof}
First, by Remark \ref{bijection} we conclude that $T\mapsto T-N(\alpha)$ is a bijection between the positive currents in $\alpha$ and those in $Z(\alpha)$, thus we have
$$
E_{nK}(\alpha)=E_{nK}(Z(\alpha))\bigcup \text{supp}(N(\alpha)),
$$
and
\begin{equation}
\label{same nK}
C\not\subseteq E_{nK}(Z(\alpha)) \iff C\not\subseteq E_{nK}(\alpha).
\end{equation}
By the assumption of theorem, $N(\alpha)|_C$ is a well-defined positive current with analytic singularites on $C$.  By the definition of $\Delta_{\mathbb{R},X|C}(\alpha)$, we have
$$
\Delta_{\mathbb{R},X|C}(\alpha)=\Delta_{\mathbb{R},X|C}(Z(\alpha))+\nu_x(N(\alpha)|_C).
$$
We take the closure of the sets to get
$$
\Delta_{X|C}(\alpha)=\Delta_{X|C}(Z(\alpha))+\nu_x(N(\alpha)|_C).
$$
Since $\alpha$ is big, thus $Z(\alpha)$ is big and nef, and by Proposition \ref{restrict volume} we have $\Delta_{X|C}(Z(\alpha))=[0,Z(\alpha)\cdot C]$. We have proved the lemma.

\end{proof}

\begin{defin}
If $\alpha$ is big and $\beta$ is pseudo-effective, then the slope of $\beta$ with respect to $\alpha$ is defined as
$$
s=s(\alpha,\beta)=\sup\{t>0\mid \alpha-t\beta\ \text{is big}\}.
$$
\end{defin}

\begin{rem}
\label{boundary}
Since the big cone is open, we know that $\{t>0\mid \alpha > t\beta\}$ is an open set in $\mathbb{R}^+$. Thus $\alpha-s\beta$ belongs to the boundary of the big cone $\mathcal{E}$, and $\vol_X(\alpha-s\beta)=0$.
\end{rem}

\begin{proof}[Proof of Theorem \ref{Okounkov}]
For $t\in [0,s)$, we put $\alpha_t=\alpha-t\{C\}$, and let $Z_t:=Z(\alpha_t)$ and $N_t:=N(\alpha_t)$ be the positive and negative part of the divisorial Zariski decomposition of $\alpha_t$.

(i) First we assume $C$ is nef. By Theorem \ref{contain exceptional},  the prime divisors in $E_{nK}(Z(\alpha_t))$ form an exceptional family, thus $C\not\subseteq  E_{nK}(Z(\alpha_t))$, thus $C\not\subseteq  E_{nK}(\alpha_t)$ by (\ref{same nK}). By Lemma \ref{restrict body} we have $
\Delta_{X| C}(\alpha_t)=[\nu_x(N_t|_C),Z_t\cdot C+\nu_x(N_t|_C)].
$

By the definition of $\mathbb{R}$-convex body and restrict $\mathbb{R}$-convex body, we have
$$
\Delta_{\mathbb{R}}(\alpha)\bigcap t\times \mathbb{R}= t\times \Delta_{\mathbb{R},X|C}(\alpha_t).
$$
Thus
$$
t\times \overline{\Delta_{\mathbb{R},X|C}(\alpha_t)}\subseteq \overline{\Delta_{\mathbb{R}}(\alpha)}\bigcap t\times \mathbb{R}.
$$
However, since both $\Delta_{\mathbb{R},X}(\alpha)$ and $\Delta_{\mathbb{R},X|C}(\alpha_t)$ are closed convex sets in $\mathbb{R}^2$ and $\mathbb{R}$, we have
$$
t\times \overline{\Delta_{\mathbb{R},X|C}(\alpha_t)}= \overline{\Delta_{\mathbb{R}}(\alpha)}\bigcap t\times \mathbb{R},
$$
therefore
\begin{equation}
\label{slice}
t\times \Delta_{X|C}(\alpha_t)=\Delta(\alpha)\bigcap t\times \mathbb{R}.
\end{equation}
Let
$$
f(t)=\nu_x(N_t|_C)\ ,\ g(t)=Z_t\cdot C+\nu_x(N_t|_C),
$$
then $\Delta(\alpha)\bigcap [0,s)\times \mathbb{R}$ is the region bounded by the graphs of $f(t)$ and $g(t)$.

Now we prove the piecewise linear property of $f(t)$ and $g(t)$. By Lemma \ref{component}, we have $N_{t_1}\leq N_{t_2}$ if $0\leq t_1\leq t_2<s$, thus $f(t)$ is increasing. Since $N_t$ is an exceptional divisor by Theorem \ref{orthogonal}, the number of the prime components of $N_t$ is uniformly bounded by the Picard number $\rho(X)$. Thus we can denote $N_t=\sum_{i=1}^{r}a_i(t)N_i$, where $a_i(t)\geq 0$ is an increasing and continuous function. Moreover, there exsists $0=t_0<t_1<\ldots<t_k=s$ such that the prime components of $N_t$ are the same when $t$ lies in the interval $(t_i,t_{i+1})$ for $i=0,\ldots,k-1$, and the number of prime components of $N_t$ will increase at every $t_i$ for $i=1,\ldots,k-1$. We write $s_i=\frac{t_{i-1}+t_i}{2}$ for $i=1,\ldots,k$.

We denote the linear subspace of $H^{1,1}(X,\mathbb{R})$ spanned by the prime components of $N_{s_i}$ by $V_i$, and let $V_i^{\bot}$ be the orthogonal space of $V_i$ with respect to $q$. By the proof of Lemma \ref{component},  for $t\in (t_{i-1},t_i)$ we have
\begin{eqnarray}
\label{decompose}
Z_t=Z_{s_i}+(s_i-t)\{C\}_i^\bot\\
\label{decompse 2}
N_t=N_{s_i}+(t_i-t)C_i^{\parallel},
\end{eqnarray}
where $\{C\}_i^\bot$ is the projection of $\{C\}$ to $V_i^\bot$, and $C_i^\parallel$ is a linear combination of the prime components of $N_{s_i}$ satisfying that the cohomology class $\{C_i^\parallel\}$ is equal to the projection of $\{C\}$ to $V_i$. By Theorem \ref{contain exceptional}, the prime components of $N_{s_i}$ are independent, thus $C_i^\parallel$ is uniquely defined.
The piecewise linearity property of $f(t)$ and $g(t)$ follows directly from (\ref{decompose}) and (\ref{decompse 2}), and thus $f(t)$ and $g(t)$ can be continuously extended to $s$. Therefore we conclude that $\Delta(\alpha)$ is the region bounded by the graphs of $f(t)$ and $g(t)$ for $t\in[0,s]$. Thus the vertices of $\Delta(\alpha)$ are contained in the set  $\{(t_i,f(t_i)),(t_j,g(t_j))\in \mathbb{R}^2\mid i,j=0,\ldots,k\}$.  This
means that a vertex of $\Delta(\alpha)$ may only occur for those $t\in[0,s]$, where a new curve appears in $N_t$. Since $r\leq \rho(X)$, the number of vertices is bounded by $2\rho(X)+2$.
The fact that $f(t)$ is convex and $g(t)$ concave is a consequence of the convexity of $\Delta(\alpha)$.

By (\ref{slice}), we have
\[\begin{split}
2\vol_{\mathbb{R}^2}(\Delta(\alpha))&=2\int_{0}^{s}\vol_\mathbb{R}(\Delta_{X|C}(\alpha_t))\mathrm{d}t\\
&=2\int_{0}^{s}Z_t\cdot C\mathrm{d}t\\
&=\vol_X(\alpha)-\vol_X(\alpha-s C)\\
&=\vol_X(\alpha).
\end{split} \]
where the second equality follows by Proposition \ref{restrict volume}, the third one by Theorem \ref{differential} and the last one by Remark \ref{boundary}. We have proved the theorem under the assumption that $C$ is nef.

(ii) Now we prove the theorem when $C$ is not nef, i.e., $C^2<0$. Recall that $a:=\sup\{t>0\mid C\subseteq E_{nK}(\alpha)\}$. By (\ref{same nK}), if  $C\subseteq E_{nK}(\alpha_t)$ for some $t\in[0,s)$, we have $C\subseteq E_{nK}(Z(\alpha_t))$. By the proof in Theorem \ref{differential 2} we have
\begin{gather}
Z(\alpha_s)\cdot C=0,\nonumber\\
Z(\alpha_s)=Z(\alpha_t),\nonumber
\end{gather}
for $0\leq s\leq t$. Thus we have
$$
\{0\leq t <s\mid C\not\subseteq  E_{nK}(\alpha_t)\}=(a,s),
$$
and $\Delta(\alpha)$ is contained in $[a,s]\times \mathbb{R}$. By Theorem \ref{differential 2} we also have
\[\begin{split}
2\vol_{\mathbb{R}^2}(\Delta(\alpha))&=2\int_{a}^{s}\vol_\mathbb{R}(\Delta_{X|C}(\alpha_t))\mathrm{d}t\\
&=2\int_{a}^{s}Z_t\cdot C\mathrm{d}t\\
&=\vol_X(\alpha_a)-\vol_X(\alpha_s)\\
&=\vol_X(\alpha).
\end{split} \]
Since the prime components of $N_{t_1}$ is contained in that of $N_{t_2}$ if $a<t_1\leq t_2<s$, using the same arguments above, we obtain the piecewise linear property of $f(t)$ and $g(t)$ which can also be extended to $s$. The theorem is proved completely. \end{proof}

\begin{rem}
If $X$ is a projective surface, by the main result in \cite{BKS03}, the cone of big divisors of $X$ admits a locally finite decomposition into locally polyhedral subcones such that the support of the negative part in the Zariski decomposition is constant on each subcone. It is noticeable that if we only assume $X$ to be K\"ahler, this decomposition still holds if we replace the cone of big divisors by the cone of big classes and use divisorial Zariski decomposition instead. This property ensures that the generalized Okounkov bodies should also be polygons.
\end{rem}

\subsection{Generalized Okounkov bodies for pseudo-effective classes}

Throughout this subsection, $X$ will stand for a K\"ahler surface if not specially mentioned. Our main goal in this subsection is to study the behavior of generalized Okounkov bodies on the boundary of the big cone. 
\begin{defin}
Let $X$ be any K\"ahler manifold, if $\alpha\in H^{1,1}(X,\mathbb{R})$ is any pseudo-effective class. We define the \emph{generalized Okounkov body} $\Delta(\alpha)$ with respect to the fixed flag by
$$
\Delta(\alpha):=\bigcap_{\epsilon>0}\Delta(\alpha+\epsilon\omega),
$$
where $\omega$ is any K\"ahler class.
\end{defin} 
It is easy to check that our definition does not depend on the choice of $\omega$, and if $\alpha$ is big, by Proposition \ref{body continuity}, the definition is consistent with Definition \ref{generalized Okounkov}. Now we recall the definition of numerical dimension for any real (1,1)-class.
\begin{defin}[numerical dimension]
Let $X$ be a compact K\"ahler manifold. For a class $\alpha\in H^{1,1}(X,\mathbb{R})$, the \emph{numerical dimension} $n(\alpha)$ is defined to be $-\infty$ if $\alpha$ is not pseudo-effective, and
$$
n(\alpha)=\text{\rm max}\{p\in\mathbb{N},\langle \alpha^p\rangle\neq 0\},
$$
if $\alpha$ is pseudo-effective. 
\end{defin} 
We recall that the right-hand side of the equation above involves the \textit{positive intersection product} $\langle \alpha^{p}\rangle\in H^{p,p}_{\geq0}(X,\mathbb{R})$ defined in \cite{BDPP13}. When $X$ is a K\"ahler surface, we simply have 
$$
n(\alpha)=\text{\rm max}\{p\in\mathbb{N}, Z(\alpha)^p\neq 0\},\ \  p \in \{0,1,2\}.
$$
If $n(\alpha)=2$, $\alpha$ is big and the situation is studied in the last subsection. Throughout this subsection, we assume $\alpha\in\mathcal{\partial E}$.

\begin{lem}\it
\label{construct nef}
Let $\{N_1,\ldots,N_r\}$ be an exceptional family of prime divisors, $\omega$ be any K\"ahler class. Then there exists unique positive numbers $b_1,\ldots,b_r$ such that $\omega+\sum_{i=1}^{r}b_iN_i$ is big and nef satisfying $\text{\rm Null}(\omega+\sum_{i=1}^{r}b_iN_i)=\bigcup_{i=1}^{r}N_i$.
\end{lem}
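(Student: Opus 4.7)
My strategy is to pick $b_1,\ldots,b_r$ as the unique solution of the linear system $(\omega+\sum_i b_iN_i)\cdot N_j=0$ for $j=1,\ldots,r$, and then verify positivity, nefness, bigness and the null-locus condition one by one.

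\textbf{Step 1 (set-up and uniqueness).} Let $S=(N_i\cdot N_j)_{i,j}$ be the intersection matrix of the exceptional family and put $v=(\omega\cdot N_1,\ldots,\omega\cdot N_r)^{T}$. Since $\omega$ is K\"ahler and each $N_i$ is an effective curve, $v$ has strictly positive entries. By Theorem \ref{property exceptional}(iv), $S$ is negative definite and in particular invertible, so requiring $\alpha\cdot N_j=0$ for every $j$ forces
$$
b=(b_1,\ldots,b_r)^{T}=-S^{-1}v,
$$
which already gives the uniqueness claim.

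\textbf{Step 2 (strict positivity of the $b_i$).} Because $N_i\neq N_j$ are distinct prime divisors, $s_{ij}=N_i\cdot N_j\geq 0$ for $i\neq j$, so Lemma \ref{BKS} applies to $S$ and yields $(S^{-1})_{ij}\leq 0$ for all $i,j$. Hence the matrix $B:=-S^{-1}$ has all entries $\geq 0$, and $b=Bv$ has entries $\geq 0$. For strictness I would argue that $B$ is invertible, so no row of $B$ can be identically zero; combined with $v>0$ componentwise this forces $b_i=(Bv)_i>0$ for every $i$. (Alternatively, one sees that the diagonal entries $(S^{-1})_{ii}=\det S_{\widehat{i}\widehat{i}}/\det S$ are strictly negative, since any principal submatrix of a negative definite matrix is negative definite.)

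\textbf{Step 3 (nefness and the null locus).} Set $\alpha:=\omega+\sum_i b_iN_i$. By construction $\alpha\cdot N_i=0$ for all $i$. For any other irreducible curve $D\notin\{N_1,\ldots,N_r\}$, the intersections $N_i\cdot D\geq 0$ and $\omega\cdot D>0$ give
$$
\alpha\cdot D=\omega\cdot D+\sum_i b_iN_i\cdot D>0,
$$
so $\alpha$ is nef on a K\"ahler surface. Moreover the two computations show $\text{\rm Null}(\alpha)=\bigcup_i N_i$ once bigness is established, via Theorem \ref{Tosatti}.

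\textbf{Step 4 (bigness).} Using $\omega\cdot N_j=-\sum_i b_iN_i\cdot N_j$ (which is exactly the defining equation $\alpha\cdot N_j=0$) one obtains the clean identity
$$
\alpha^2=\omega\cdot\alpha+\omega\cdot\sum_i b_iN_i+\Bigl(\sum_i b_iN_i\Bigr)^2=\omega\cdot\alpha,
$$
and then $\alpha^2=\omega^2+\sum_i b_i(\omega\cdot N_i)>0$ since $\omega^2>0$, $\omega\cdot N_i>0$, $b_i>0$. A nef class on a K\"ahler surface with positive self-intersection is big, finishing the proof.

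The only nontrivial point is the strict positivity of the $b_i$ in Step 2; once Lemma \ref{BKS} is invoked, everything else is a short intersection-theoretic computation. I expect the referee-level obstacle will be making sure that the combination of "$S^{-1}$ has non-positive entries" and $v>0$ indeed gives a strictly positive vector, which is the reason I insist on using the invertibility of $B$ (or the principal submatrix argument) rather than just the sign statement of Lemma \ref{BKS}.
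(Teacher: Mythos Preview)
Your approach is exactly the one the paper takes: define $b=-S^{-1}v$ from the linear system $(\omega+\sum_i b_iN_i)\cdot N_j=0$, invoke Lemma~\ref{BKS} for the sign of the entries of $S^{-1}$, and deduce bigness and nefness. In fact you are more careful than the paper's own proof, which simply asserts ``all $b_i$ are positive and thus $\omega+\sum_i b_iN_i$ is big and nef'' without justifying strict positivity, nefness on a general K\"ahler surface, or the null-locus identity; your Steps~2--4 fill in precisely those gaps, and the computations there are correct.
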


\begin{proof}
If we set
\begin{equation}
\left(
  \begin{array}{c}
    b_1\\
    \vdots\\
    b_r\\
  \end{array}
\right)=
-S^{-1}\cdot\left(
  \begin{array}{c}
    \omega\cdot N_1\\
    \vdots\\
    \omega\cdot N_r\\
  \end{array}
\right)\nonumber,
\end{equation}
where $S$ denotes the intersection matrix of $\{N_1,\ldots, N_r\}$, we have $(\omega+\sum_{i=1}^{r}b_iN_i)\cdot N_j=0$ for $j=1,\ldots,r$. By Lemma \ref{BKS}, we conclude that all $b_i$ are positive and thus $\omega+\sum_{i=1}^{r}b_iN_i$ is big and nef. 
\end{proof}

\begin{proposition}\it
\label{Zariski pseudo}
Let $\alpha$ be any pseudo-effective class with $N(\alpha)=\sum_{i=1}^{r}a_iN_i$, $\omega$ be a K\"ahler class. Then for $\epsilon>0$ small enough, we have the divisorial Zariski decomposition 
\begin{eqnarray}
Z(\alpha+\epsilon\omega)=Z(\alpha)+\epsilon(\omega+\sum_{i=1}^{r}b_iN_i),\nonumber\\
N(\alpha+\epsilon\omega)=\sum_{i=1}^{r}(a_i-\epsilon b_i)N_i,\nonumber
\end{eqnarray}
where $b_i$ is the positive number defined in Lemma \ref{construct nef}.
\end{proposition}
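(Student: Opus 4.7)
The plan is to prove this by verifying that the proposed decomposition satisfies the characterization of the divisorial Zariski decomposition on a surface given by Theorem \ref{orthogonal}: uniqueness of an orthogonal decomposition into a modified nef class plus an exceptional effective $\mathbb{R}$-divisor. First I would observe the algebraic identity
\[
\alpha+\epsilon\omega = \Bigl(Z(\alpha)+\epsilon\bigl(\omega+\sum_{i=1}^r b_i N_i\bigr)\Bigr) + \sum_{i=1}^r (a_i-\epsilon b_i) N_i,
\]
which is a purely formal rewriting of $\alpha+\epsilon\omega = Z(\alpha)+\sum_i a_i N_i + \epsilon\omega$. So the real content is to check each summand has the required property.

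For the first summand, $Z(\alpha)$ is modified nef by Theorem \ref{modified big}, hence nef on the surface $X$ by Remark \ref{mf=f}. By Lemma \ref{construct nef} the class $\omega+\sum_i b_i N_i$ is big and nef. Therefore the sum $Z(\alpha)+\epsilon(\omega+\sum_i b_i N_i)$ is big and nef, and in particular modified nef. For the second summand, since the $N_i$ already form an exceptional family (being the prime components of the negative part $N(\alpha)$, cf.\ Theorem \ref{property exceptional}), it suffices to take $\epsilon < \min_i a_i/b_i$ to ensure that all coefficients $a_i - \epsilon b_i$ remain positive; then $\sum_i (a_i-\epsilon b_i) N_i$ is an effective exceptional $\mathbb{R}$-divisor.

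The orthogonality step is the heart of the argument. By Theorem \ref{orthogonal} applied to $\alpha$, the class $Z(\alpha)$ is orthogonal to $N(\alpha)$; since $Z(\alpha)$ is nef and each $Z(\alpha)\cdot N_j\geq 0$ with $\sum_i a_i Z(\alpha)\cdot N_i = 0$ and $a_i>0$, one gets $Z(\alpha)\cdot N_j=0$ for all $j$. Combined with the defining relations $(\omega+\sum_i b_i N_i)\cdot N_j = 0$ from Lemma \ref{construct nef}, this gives
\[
\Bigl(Z(\alpha)+\epsilon(\omega+\sum_{i=1}^r b_i N_i)\Bigr)\cdot N_j = 0 \quad \text{for every } j,
\]
which is precisely orthogonality between the two pieces of the proposed decomposition with respect to $q$.

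Invoking the uniqueness statement in Theorem \ref{orthogonal} then identifies the two summands with $Z(\alpha+\epsilon\omega)$ and $N(\alpha+\epsilon\omega)$ respectively, yielding the claimed formulas. There is no serious obstacle: the entire argument is a direct assembly of Lemma \ref{construct nef}, the orthogonality of Zariski decomposition, and the uniqueness characterization. The only subtle point to highlight is that one must choose $\epsilon>0$ small enough so that all $a_i-\epsilon b_i$ remain strictly positive, which is needed to keep the prime components unchanged and thus preserve the exceptional structure.
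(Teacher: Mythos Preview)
Your proof is correct and follows essentially the same approach as the paper's own argument: verify that the proposed splitting satisfies the characterization of Theorem~\ref{orthogonal} (a nef/modified nef part orthogonal to an exceptional effective $\mathbb{R}$-divisor) and invoke uniqueness. The paper's version is simply more terse---it asserts nefness and orthogonality to all $N_i$ in one breath---whereas you spell out separately that $Z(\alpha)\cdot N_j=0$ (from the Zariski decomposition of $\alpha$) and $(\omega+\sum_i b_iN_i)\cdot N_j=0$ (from Lemma~\ref{construct nef}), and make the smallness condition $\epsilon<\min_i a_i/b_i$ explicit.
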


\begin{proof}
Since $Z(\alpha)+\epsilon(\omega+\sum_{i=1}^{r}b_iN_i)$ is nef and orthogonal to all $N_i$ by Lemma \ref{construct nef}, by Theorem \ref{orthogonal}, if $\epsilon$ satisfies that $a_i-\epsilon b_i>0$ for all $i$, the divisorial decomposition in the proposition holds.
\end{proof}

If $n(\alpha)=0$, we have $Z(\alpha)=0$ and thus $\alpha=\sum_{i=1}^{r} a_iN_i$ is an exceptional effective $\mathbb{R}$-divisor. We fix a flag
$$
X\supseteq C\supseteq \{x\},
$$
where $C\neq N_i$ for all $i$. Then we have
\begin{thm}\it
For any pseudo-effective class $\alpha$ whose numerical dimension $n(\alpha)=0$, we have 
$$\Delta_{(C,x)}(\alpha)=0\times \nu_x(N(\alpha)|_C).$$
\end{thm}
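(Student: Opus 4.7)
The plan is to exploit the definition $\Delta(\alpha) = \bigcap_{\epsilon > 0} \Delta(\alpha + \epsilon\omega)$ together with the explicit Zariski decomposition of $\alpha + \epsilon\omega$ supplied by Proposition \ref{Zariski pseudo}: for all $\epsilon$ small enough, $\alpha + \epsilon\omega$ is big with $Z(\alpha + \epsilon\omega) = \epsilon(\omega + \sum_i b_i N_i)$ big and nef and $N(\alpha + \epsilon\omega) = \sum_i (a_i - \epsilon b_i) N_i \to N(\alpha) = \alpha$ as $\epsilon \to 0$. Since $C \neq N_i$ for every $i$, one has $C \cdot N_i \geq 0$ and $C \cdot \omega > 0$, hence $Z(\alpha + \epsilon\omega) \cdot C > 0$; by Theorem \ref{Tosatti} this forces $C \not\subseteq E_{nK}(\alpha + \epsilon\omega)$, so that every quantity we will restrict to $C$ is well defined.

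For the upper bound I would take an arbitrary positive current $T$ with analytic singularities in $\alpha + \epsilon\omega$ and invoke Remark \ref{bijection} to decompose $T = N(\alpha + \epsilon\omega) + S$, where $S$ is a positive current in $Z(\alpha + \epsilon\omega)$. Because $C$ does not lie in the support of $N(\alpha + \epsilon\omega)$, one has $\nu_1(T) = \nu(S,C)$, and the standard surface estimate $\nu(S,C)\cdot(C\cdot\omega) \leq S\cdot\omega = Z(\alpha + \epsilon\omega)\cdot\omega = O(\epsilon)$ yields $\nu_1(T) = O(\epsilon)$. After peeling off $\nu_1(T)[C]$ and restricting to $C$, the residual positive measure $(S - \nu_1(T)[C])|_C$ has total mass $Z(\alpha + \epsilon\omega)\cdot C - \nu_1(T)\,C^2 = O(\epsilon)$, so its Lelong number at $x$ is also $O(\epsilon)$. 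Since $\nu_x(T|_C) = \nu_x(N(\alpha + \epsilon\omega)|_C) + \nu_x((S - \nu_1(T)[C])|_C)$, this gives $\nu_2(T) = \nu_x(N(\alpha + \epsilon\omega)|_C) + O(\epsilon)$. Sending $\epsilon \to 0$ and using continuity of the coefficients in $N(\alpha + \epsilon\omega)$, every point of $\bigcap_\epsilon \Delta(\alpha + \epsilon\omega)$ collapses to $(0,\, \nu_x(N(\alpha)|_C))$.

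For the reverse containment I would construct the desired point explicitly. Apply Proposition \ref{restrict volume} to the big and nef class $Z(\alpha + \epsilon\omega)$ along $C$ (valid by the first step) to produce a positive current $S_\epsilon \in Z(\alpha + \epsilon\omega)$ with analytic singularities satisfying $C \not\subseteq E_+(S_\epsilon)$ and $\nu_x(S_\epsilon|_C) = 0$. Then $T_\epsilon := N(\alpha + \epsilon\omega) + S_\epsilon$ realises the valuation vector $(0,\, \nu_x(N(\alpha + \epsilon\omega)|_C))$, which therefore lies in $\Delta(\alpha + \epsilon\omega)$. Since the family $\{\Delta(\alpha + \epsilon\omega)\}$ is nested in $\epsilon$ (by Proposition \ref{body continuity}) and each body is closed, letting $\epsilon \to 0$ forces the limit $(0,\, \nu_x(N(\alpha)|_C))$ to lie in every $\Delta(\alpha + \epsilon_0\omega)$, and hence in $\Delta(\alpha)$.

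The main obstacle I expect is the $O(\epsilon)$ bound on $\nu_2(T)$: to carry it out rigorously one must check that $S - \nu_1(T)[C]$ genuinely restricts to a positive current on $C$, which is where the analytic-singularities framework of Section \ref{analytic singularity} and the non-inclusion $C \not\subseteq E_{nK}(\alpha + \epsilon\omega)$ are crucial, and then control its Lelong number at $x$ via the $O(\epsilon)$ class-theoretic data. Once these uniform estimates are in place, the two containments fit together as a two-sided sandwich and the theorem follows.
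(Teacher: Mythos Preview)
Your argument is correct, and the obstacle you flag is not a genuine one: the restriction $(S-\nu_1(T)[C])|_C$ is well defined precisely by the framework of Section~\ref{analytic singularity} and Section~\ref{defin} (after peeling off the $C$-component of a current with analytic singularities, $C$ is no longer contained in $E_+$), and the Lelong number at $x$ is bounded by the total mass on $C$, which is the cohomological quantity $Z(\alpha+\epsilon\omega)\cdot C-\nu_1(T)\,C^2=O(\epsilon)$. One small imprecision: Proposition~\ref{restrict volume} only gives $0$ as a point of the closure $\Delta_{X|C}(Z(\alpha+\epsilon\omega))$, not a current with $\nu_x(S_\epsilon|_C)=0$ on the nose; but since $\Delta(\alpha+\epsilon\omega)$ is closed this is harmless (or invoke Lemma~\ref{restrict body} directly to place $(0,\nu_x(N(\alpha+\epsilon\omega)|_C))$ in the body).

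The paper's proof is shorter and more structural. It observes that, by Proposition~\ref{Zariski pseudo}, the positive part $Z(\alpha+\epsilon\omega)=\epsilon\bigl(\omega+\sum_i b_iN_i\bigr)$ is $\epsilon$ times a \emph{fixed} big class, independent of $\epsilon$. Combined with the bijection of Remark~\ref{bijection} and the additivity $\nu(T)=\nu\bigl(N(\alpha+\epsilon\omega)\bigr)+\nu(S)$ (valid because $C\neq N_i$), this yields the exact identity
\[
\Delta(\alpha+\epsilon\omega)=\epsilon\,\Delta\Bigl(\omega+\sum_i b_iN_i\Bigr)+\nu\Bigl(\sum_i(a_i-\epsilon b_i)N_i\Bigr),
\]
so the bodies are all translates and dilates of one fixed bounded convex set. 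The diameter collapses linearly and the anchor point converges to $(0,\nu_x(N(\alpha)|_C))$, giving both containments at once. Your route replaces this single scaling identity by two separate $O(\epsilon)$ estimates obtained via intersection with $\omega$ and with $C$; this is more elementary in that it avoids appealing to the full Okounkov body of the auxiliary class $\omega+\sum b_iN_i$, but it is less transparent about \emph{why} the body shrinks to a point. The paper's argument also makes the geometry visible: the limit body is a single point precisely because $Z(\alpha)=0$, i.e.\ because $n(\alpha)=0$.
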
 
\begin{proof}
We asumme $N(\alpha)=\sum_{i=1}^{r}a_iN_i$. Fix a K\"ahler class $\omega$, by Proposition \ref{Zariski pseudo}, we have
\begin{eqnarray}
Z(\alpha+\epsilon\omega)=\epsilon(\omega+\sum_{i=1}^{r}b_iN_i)\label{0z},\\
N(\alpha+\epsilon\omega)=\sum_{i=1}^{r}(a_i-\epsilon b_i)N_i\label{0n},
\end{eqnarray}
where $b_i$ is the positive number defined in Lemma \ref{construct nef}. Since $T\mapsto T-N(\alpha+\epsilon\omega)$ is a bijection between the positive currents in $\alpha+\epsilon\omega$ and those in $Z(\alpha+\epsilon\omega)$, we have
$$
\Delta(\alpha+\epsilon\omega)=\epsilon\Delta(\omega+\sum_{i=1}^{r}b_iN_i)+\nu(\sum_{i=1}^{r}(a_i-\epsilon b_i)N_i),
$$
where $\nu(\sum_{i=1}^{r}(a_i-\epsilon b_i)N_i)=\nu_{(C,x)}(\sum_{i=1}^{r}(a_i-\epsilon b_i)N_i)$ is the valuation-like function defined in  Section \ref{defin}. Thus the diameter of $\Delta(\alpha+\epsilon\omega)$ converges to 0 when $\epsilon$ tends to 0, and we conclude that $\Delta(\alpha)$ is a single point in $\mathbb{R}^2$. Since
\begin{eqnarray}
\Delta(\alpha+\epsilon\omega)\bigcap 0\times \mathbb{R}&=& 0\times\Delta_{X|C}(\alpha+\epsilon\omega)\nonumber\\
&=&0\times [\nu_x(N(\alpha+\epsilon\omega)|_C),\nu_x(N(\alpha+\epsilon\omega)|_C)+Z(\alpha+\epsilon\omega)\cdot C]\label{inter}\nonumber,
\end{eqnarray}
by (\ref{0z}) and (\ref{0n}) we have 

\begin{eqnarray}
\Delta(\alpha)\bigcap0\times \mathbb{R}=0\times \nu_x(\sum_{i=1}^{r}a_iN_i|_C)\nonumber,
\end{eqnarray}
and we prove the first part of Theorem \ref{okounkov psf}..
\end{proof}

If $n(\alpha)=1$, $Z(\alpha)$ is nef but not big. If there exists one irreducible curve $C$ such that $Z(\alpha)\cdot C>0$, we fix the flag
$$
X\supseteq C\supseteq \{x\},
$$
then we have
\begin{thm}\it
For any pseudo-effective class $\alpha$ whose numerical dimension $n(\alpha)=1$, we have 
$$\Delta(\alpha)=0\times [\nu_x(N(\alpha)|_C),\nu_x(N(\alpha)|_C)+Z(\alpha)\cdot C]\nonumber.$$
\end{thm}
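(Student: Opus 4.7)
The plan is to follow closely the template used for the case $n(\alpha)=0$, working with the defining intersection $\Delta(\alpha)=\bigcap_{\epsilon>0}\Delta(\alpha+\epsilon\omega)$ for a fixed K\"ahler class $\omega$. Writing $N(\alpha)=\sum_{i=1}^{r}a_iN_i$ and invoking Proposition \ref{Zariski pseudo}, for $\epsilon>0$ small one has the divisorial Zariski decomposition $Z(\alpha+\epsilon\omega)=Z(\alpha)+\epsilon(\omega+\sum_i b_iN_i)$ and $N(\alpha+\epsilon\omega)=\sum_i(a_i-\epsilon b_i)N_i$, with the same prime components as $N(\alpha)$, none of which is $C$. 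In particular $Z(\alpha+\epsilon\omega)\cdot C=Z(\alpha)\cdot C+O(\epsilon)>0$, so $C\not\subseteq E_{nK}(Z(\alpha+\epsilon\omega))=\text{\rm Null}(Z(\alpha+\epsilon\omega))$ by Theorem \ref{Tosatti}. Lemma \ref{restrict body} then identifies the slice at $t=0$:
\[\Delta(\alpha+\epsilon\omega)\cap(\{0\}\times\mathbb{R})=\{0\}\times[\ell_\epsilon,r_\epsilon],\]
where $\ell_\epsilon:=\sum_i(a_i-\epsilon b_i)\nu_x(N_i|_C)$ and $r_\epsilon:=\ell_\epsilon+Z(\alpha+\epsilon\omega)\cdot C$.

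The crucial step is to show that the slopes $s_\epsilon:=s(\alpha+\epsilon\omega,\{C\})$ tend to zero, since then the inclusion $\Delta(\alpha+\epsilon\omega)\subseteq[0,s_\epsilon]\times\mathbb{R}$ will force $\Delta(\alpha)\subseteq\{0\}\times\mathbb{R}$. Suppose, for contradiction, that some subsequence $s_{\epsilon_k}\to s_0>0$. By Remark \ref{boundary} each class $\alpha+\epsilon_k\omega-s_{\epsilon_k}\{C\}$ lies on the boundary of the big cone, hence is pseudo-effective; since the pseudo-effective cone is closed, the limit $\alpha-s_0\{C\}$ is pseudo-effective as well. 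Pairing with the nef class $Z(\alpha)$ and using the orthogonality $Z(\alpha)\cdot N(\alpha)=0$ (Theorem \ref{orthogonal}) together with $Z(\alpha)^2=0$ (which is precisely the hypothesis $n(\alpha)=1$ on a surface), one obtains
\[0\leq Z(\alpha)\cdot(\alpha-s_0\{C\})=-s_0\,Z(\alpha)\cdot C,\]
contradicting $Z(\alpha)\cdot C>0$. This is the only place where the numerical-dimension hypothesis and the assumption $Z(\alpha)\cdot C>0$ truly interact, and I expect it to be the main technical obstacle.

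To conclude, I would invoke the monotonicity $\Delta(\alpha+\epsilon_1\omega)\subseteq\Delta(\alpha+\epsilon_2\omega)$ for $0<\epsilon_1<\epsilon_2$, established from Proposition \ref{body continuity} (i) exactly as in the proof of (iii) there. Consequently the vertical slices $\{0\}\times[\ell_\epsilon,r_\epsilon]$ form a decreasing family of closed intervals, and combining this with the previous paragraph,
\[\Delta(\alpha)=\bigcap_{\epsilon>0}\Delta(\alpha+\epsilon\omega)=\{0\}\times\bigcap_{\epsilon>0}[\ell_\epsilon,r_\epsilon].\]
Passing to the limit $\epsilon\downarrow0$ in the explicit formulas for $\ell_\epsilon$ and $r_\epsilon$ gives $\ell_\epsilon\to\nu_x(N(\alpha)|_C)$ and $r_\epsilon\to\nu_x(N(\alpha)|_C)+Z(\alpha)\cdot C$, which identifies the intersection as the claimed interval and finishes the proof.
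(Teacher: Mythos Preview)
Your proof is correct, and its overall architecture matches the paper's: invoke Proposition~\ref{Zariski pseudo} to get the explicit Zariski decomposition of $\alpha+\epsilon\omega$, use Lemma~\ref{restrict body} to compute the vertical slice at $t=0$, and then pass to the limit using the monotonicity from Proposition~\ref{body continuity}. The one genuine difference is in how you force $\Delta(\alpha)\subseteq\{0\}\times\mathbb{R}$. The paper does this by a volume--convexity argument: it computes $\vol_{\mathbb{R}^2}(\Delta(\alpha))=\lim_{\epsilon\to0}\vol_{\mathbb{R}^2}(\Delta(\alpha+\epsilon\omega))=\lim_{\epsilon\to0}Z(\alpha+\epsilon\omega)^2=Z(\alpha)^2=0$ and then observes that a closed convex planar set of zero area whose slice at $t=0$ has positive length $Z(\alpha)\cdot C>0$ must be contained in that slice. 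You instead bound the horizontal extent directly, showing $s_\epsilon\to0$ by pairing a putative pseudo-effective limit $\alpha-s_0\{C\}$ against the nef class $Z(\alpha)$ and using $Z(\alpha)^2=0$ and orthogonality. Your route is a bit longer but more explicit (it controls the first coordinate itself rather than the area) and avoids the soft convexity step; the paper's route is shorter. One small point: the inclusion $\Delta(\alpha+\epsilon\omega)\subseteq[0,s_\epsilon]\times\mathbb{R}$ that you use is exactly the content of Theorem~\ref{Okounkov}, so you should cite it there.
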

\begin{proof}
By the assumption $Z(\alpha)\cdot C>0$ we know that $C\not\subseteq {\rm Supp}(N(\alpha))$. By Proposition \ref{Zariski pseudo}, when $\epsilon$ small enough, the divisorial Zariski decomposition for $\alpha+\epsilon\omega$ is
\begin{eqnarray}
Z(\alpha+\epsilon\omega)=Z(\alpha)+\epsilon(\omega+\sum_{i=1}^{r}b_iN_i),\label{z}\\
N(\alpha+\epsilon\omega)=\sum_{i=1}^{r}(a_i-\epsilon b_i)N_i,\label{n}
\end{eqnarray}
where $b_i$ is the positive number defined in Lemma \ref{construct nef}. Combine (\ref{z}) and (\ref{n}), we have
\begin{eqnarray}
\Delta(\alpha)\bigcap0\times \mathbb{R}&=& \bigcap_{\epsilon>0}\Delta(\alpha+\epsilon\omega)\bigcap 0\times \mathbb{R}\nonumber\\
&=& \bigcap_{\epsilon>0}0\times [\nu_x(N(\alpha+\epsilon\omega)|_C),\nu_x(N(\alpha+\epsilon\omega)|_C)+Z(\alpha+\epsilon\omega)\cdot C]\nonumber\\
&=& 0\times [\nu_x(\sum_{i=1}^{r}a_iN_i|_C),\nu_x(\sum_{i=1}^{r}a_iN_i|_C)+Z(\alpha)\cdot C]\nonumber.
\end{eqnarray}
Since we have 
$$\vol_{\mathbb{R}^2}(\Delta(\alpha))=\lim\limits_{\epsilon\rightarrow 0}\vol_{\mathbb{R}^2}(\Delta(\alpha+\epsilon\omega))=\lim\limits_{\epsilon\rightarrow 0}Z(\alpha+\epsilon\omega)^2=0,$$
and $\Delta(\alpha)$ is a closed convex set, we conclude that there are no points of $\Delta(\alpha)$ which lie outside $0\times \mathbb{R}$ as $\vol_{\mathbb{R}}(\Delta(\alpha)\bigcap0\times\mathbb{R})=Z(\alpha)\cdot C>0$. We finish the proof of Theorem \ref{okounkov psf}.
\end{proof}

\footnotesize
\noindent\textit{Acknowledgments.}
I would like to express my warmest gratitude to my thesis supervisor Professor Jean-Pierre Demailly for his many valuable suggestions and help in this work. I also would like to thank Professor Sen Hu for his constant encouragement. This research is supported by the China Scholarship Council.

\end{document}